\def\mbR{\mathbb{R}}
\newtheorem{lemma}{Lemma}
\newtheorem{definition}{Definition}
\newtheorem{assumption}{Assumption}
\newtheorem{proposition}{Proposition}
\newtheorem{corollary}{Corollary}
\newcommand{\an}[1]{{\color{black}#1}}
\newcommand{\us}[1]{{\color{black}#1}}
 \newcommand{\remove}[1]{}
\newcommand{\EXP}[1]{\mathsf{E}\!\left[#1\right] }
\newcommand{\fhat}[1]{\hat{f} }
\def\sF{\mathcal{F}}
\def\Kbar{\bar{K}}
\def\Real{\mathbb{R}}
\def\g{\gamma}
\def\e{\epsilon}
\def\a{\alpha}
\author{Farzad~Yousefian,   
        Angelia~Nedi\'c, and   
		Uday V.~Shanbhag \thanks{The authors are with the Department of Industrial and Enterprise
Systems Engineering, University of Illinois, Urbana, IL 61801, USA,
{\tt\small \{yousefi1,angelia,udaybag\}@illinois.edu}. Nedi\'{c} and
Shanbhag gratefully acknowledge the support of the NSF through the award
NSF CMMI 0948905 ARRA.}}
\title{\LARGE \bf On Stochastic Gradient and Subgradient Methods \\
				with Adaptive Steplength Sequences}
\begin{document}
\maketitle
\thispagestyle{empty}
\pagestyle{plain}
\vspace{-0.5in}
\begin{abstract}
Traditionally, stochastic approximation (SA) schemes have been
popular choices for solving stochastic optimization problems.
However, the performance of standard SA implementations can vary
significantly based on the choice of the steplength sequence, and in
general, little guidance is provided about good choices. 
Motivated by this gap, in the first part of the paper,
we present two adaptive steplength schemes for strongly convex differentiable 
stochastic optimization problems, equipped with convergence theory, 
that aim to overcome some of the reliance on user-specific parameters.  
Of these, the first scheme,
referred to as a recursive steplength stochastic approximation (RSA)
scheme, optimizes the error bounds to derive a rule that expresses the
steplength at a given iteration as a simple function of the steplength
at the previous iteration and certain problem parameters.  
The second scheme, termed as a cascading
steplength stochastic approximation (CSA) scheme, maintains the
steplength sequence as a piecewise-constant decreasing function with the
reduction in the steplength occurring when a suitable error threshold is met. 

In the second part of the paper, we allow for nondifferentiable
objectives but with bounded subgradients over a certain domain. In such
a regime, we propose a local smoothing technique, based on random local
perturbations of the objective function, that leads to a differentiable
approximation of the function.  Assuming a uniform distribution on the
local randomness, we establish a Lipschitzian property for the gradient
of the approximation and prove that the obtained Lipschitz bound grows
at a modest rate with problem size. This facilitates the development of
an adaptive steplength stochastic approximation framework, which now
requires sampling in the product space of the original measure and the
artificially introduced distribution. The resulting adaptive steplength
schemes are applied \an{to three } stochastic optimization
problems. In particular, we observe that both schemes perform well in
practice and display markedly less reliance on user-defined parameters.
\end{abstract} \maketitle

\section{Introduction}\label{sec:introduction}
The use of stochastic gradient and subgradient schemes for the solution
of stochastic convex optimization problems has a long tradition,
beginning with an iterative scheme, first proposed by Robbins and
Monro~\cite{robbins51sa}, that relied primarily on noisy gradient
observations.  Research by Ermoliev and his
coauthors~\cite{Ermoliev69,Ermoliev76,Ermoliev83,Ermoliev88} focused
largely on quasigradient (subgradient) methods and considered a host of
stochastic programming problems, amongst them being two-period
recourse-based problems (see~\cite{birge97introduction}). To accelerate
the convergence of stochastic subgradient methods, ergodic sequences,
arising from the averaging of  iterates, have been employed
in~\cite{ruz83averaging,Polyak92,Polyak01,Nemirovski09}. Often gradient
computations are either costly or unavailable; in such instances, a
finite-difference approximation of the gradient can be constructed as
first observed by Kiefer and Wolfowitz~\cite{Kiefer52}. While standard
finite-difference techniques  perturb one direction at a time to obtain
gradient estimates, simultaneous perturbation stochastic approximation
techniques simultaneously perturb all directions and general require
fewer function evaluations~\cite{Spall1992,Fu03}. More recently, there
has been a significant interest in the application of ODE-based methods
for investigating the stability and convergence of the associated
stochastic approximation schemes~\cite{borkar00ode,Borkar08}. An elegant
exposition of these methods may be found in the monographs by 
Polyak~\cite{Polyak87}, Kushner and Yin~\cite{Kush03}, and 
Borkar~\cite{Borkar08}.

Sample-average approximation
(SAA) techniques~\cite{shap03sampling} are often viewed as an
alternative to stochastic approximation techniques and are particularly 
attractive when
approximate solutions to the problem are desired in an offline manner.
This approach relies on using a sample from the underlying distribution
to construct a deterministic sample-average problem, which can be
subsequently solved via standard nonlinear programming solvers, as seen
in~\cite{linderoth02empirical}.  In~\cite{Nemirovski09}, the
authors demonstrate that stochastic approximation schemes are shown to
be competitive with SAA techniques. Importantly
in~\cite{Nemirovski09}, Nemirovski et al.\ develop a robust SA
scheme that determines an optimal constant steplength for minimizing
the theoretical error over a pre-specified number of steps.
Mirror-descent generalizations of SA, that rely on a suitably defined
prox-mapping, are also presented in~\cite{Nemirovski09} (also
		see~\cite{nemirovski78cesaro}), while validation analysis is 
provided in~\cite{Lan09}.

Stochastic gradient algorithms have also been found to be effective in
solving large deterministic problems such as convex feasibility
problems~\cite{Polyak01,Alamo09,nedic10cdc}, 
\an{feasibility problems arising in control~\cite{Calafiore00,PolyakTempo01}} 
and some specially structured large-scale convex problems
in~\cite{Nedic01,Nedic01a,Nedic01b}.  
Distributed consensus-based
stochastic subgradient methods for minimizing a convex objective over a
network have been recently developed and studied
in~\cite{Sundhar08,Sundhar09,Sundhar09a}.  The success of gradient-based
methods in solving monotone variational
inequalities~\cite{facchinei02finite} has prompted the study of similar
techniques for contending with stochastic variational inequalities. In
fact, Jiang and Xu~\cite{Houyuan08} develop precisely such a scheme for
the solution of strongly monotone stochastic variational inequalities
and regularized variants were presented in~\cite{koshal10single} to
allow for application to monotone stochastic variational inequalities.
Finally, stochastic generalizations of the   mirror-prox schemes were
examined in~\cite{Nem08} and allowed for the solution of monotone
variational inequalities. 

While stochastic approximation schemes have proved successful,
other avenues exist for addressing stochastic programs. For instance, an
alternate approach lies in using sample-average approximation methods,
that obtain estimators to the optimal value and solution of the problem
through the solution of deterministic problem in which the expectation
is replaced by a sample-average.  Convergence theory for the obtained
estimators is examined by Shapiro~\cite{shap03sampling}. Decomposition
schemes, that leverage cutting-plane methods, have also been
particularly successful in addressing two-period stochastic
linear~\cite{vanslyke69lshaped}, convex~\cite{ruz04decomp} and nonconvex
programs~\cite{kulkarni10benders} while  a scalable matrix-splitting
decomposition scheme is presented in~\cite{shanbhag06forward} for
two-period stochastic Nash games. 

In this paper, we consider adaptive stochastic gradient and subgradient
methods for solving constrained stochastic convex optimization problems.
The novelty of our work can be categorized as follows: (1) the
development and analysis of two adaptive stepsize rules; and (2) the
development of a local function smoothing technique. Next, we provide
some motivation and a more elaborate description of each. 

In stochastic gradient methods (cf.~\cite{Ermoliev69,Ermoliev76,Ermoliev83,
 Ermoliev88,Polyak87, Bertsekas00,borkar00ode,Kush03,Borkar08}), 
the almost-sure convergence
of such methods is guaranteed assuming that the stepsize is diminishing
but not too rapidly, i.e., the stepsize is proportional to
$\frac{1}{k^a}$ with $\frac{1}{2}< a\le 1$.  Typically, there is no
guidance on the specific choice of the sequence and problem parameters
play little role in refining this choice. In contrast, in this paper, we
propose specific (adaptive) rules for the stepsize values that exploit
the information about the objective function.  Accordingly, our {\em
first} goal lies in examining whether one can construct a convergent
scheme under an adaptive stepsize rule that is more reflective of the
problem setting. Through out this part of the paper, we assume that the
integrand of the expectation is a random convex differentiable function.  
Under a Lipschitzian assumption on the gradient, we propose two different
adaptive stepsize rules:
\begin{enumerate}
\item[(a)] {\em Recursive stepsize rule:} In attempting to minimize the
bound on the expected error, we develop a recursive scheme for
specifying the stepsize that requires only the steplength at the
previous parameter and some problem parameters. 
Global convergence and rate estimates for this
scheme are developed. 
\item[(b)] {\em Cascading stepsize rule:} It is well-known that under
suitable assumptions, fixed-stepsize schemes are guaranteed to converge
to a compact region containing the solution set of the original problem.
We consider a modified version of such a scheme where the trajectory
moves to successively smaller compact regions containing the solution
sets. Furthermore, as soon as the trajectory of iterates reaches within
a bound of the solution set, the steplength is updated allowing the
sequence to make further progress. Effectively, we consider a method in
which the steplength sequence can be viewed as one where the stepsize is
maintained constant with drops or cascades in stepsize occurring at
particular epochs.  While the scheme has intuitive appeal, we provide a
theoretical support for the convergence of such an algorithmic
framework. \end{enumerate}

When the random integrands arising in such stochastic problems are nonsmooth, 
direct application of known SA schemes is impossible. Contending with 
nonsmoothness in mathematical programming
is often managed through avenues that rely on the solution of a sequence
of smoothed problems (cf.~\cite{jiang00smooth,facchinei96smoothing}). In
a stochastic regime, an approach for addressing such problems is through a
technique of {\em global smoothing}, as considered in~\cite{Gupal79} and 
more recently in~\cite{DeFarias08}.\footnote{See~\cite{Rusz04} for a scheme 
that develops an approximation method for addressing a class of separable 
piecewise-linear stochastic optimization
problems with integer breakpoints.} This involves modifying the original
problem by adding a random variable with possibly unbounded support.
However, such a technique is not feasible in when the  objective is
defined over a restricted domain. We present  a local smoothing
technique which leads to a globally differentiable approximation of the
original function with Lipschitz continuous gradients. Furthermore,
through such a smoothing, we derive a Lipschitz constant for the
gradients and show that the constant grows at the rate of $\sqrt{n}$ where 
$n$ is the dimensionality of the problem space. Importantly, this Lipschitzian 
property facilitates the construction of a stochastic approximation framework. 
Consequently, the second part of the paper focuses on  computing
solutions to approximations with smoothed integrands whose gradients are
shown to be provably Lipschitz continuous.

The remainder of the paper is organized as follows. In
Section~\ref{sec:formulation}, we establish the almost-sure convergence
of the classical stochastic approximation algorithm for a constrained
problem with a differentiable convex function with Lipschitz gradients.
In Sections~\ref{sec:adap} and~\ref{sec:cascading}, for a strongly
convex function, we propose and analyze two different stepsize rules,
each motivated by a minimization of an estimate on the expected error
per iteration of the method.  In Section~\ref{sec:localrand}, we
introduce a local randomized smoothing technique for nondifferentiable
convex optimization, and derive its  approximation properties as well as
a bound on the Lipschitz constant of the gradients.  In
Section~\ref{sec:numerical}, we report some numerical results obtained
by applying our proposed stepsize rules and the smoothing technique to
\an{three} test problems and conclude with a discussion in
Section~\ref{sec:conclusions}.

{\it Notation and basic terminology:} We view vectors as columns, and 
write $x^T$ to
denote the transpose of a vector $x$. We use $\|x\|$ to denote the
Euclidean vector norm, i.e., $\|x\|=\sqrt{x^Tx}$. We write $\Pi_X(x)$ 
to denote the Euclidean projection of a vector $x$ on a set $X$.
i.e., $\|x-\Pi_X(x)\|=\min_{y\in X}\|x-y\|$. 
For a convex function $f$ with domain ${\rm dom} f$, 
a vector $g$ is a {\it subgradient of $f$ at $\bar x\in{\rm dom }f$} 
if the following relation holds\footnote{For a differentiable convex $f$, 
the inequality holds with $g=\nabla f(\bar x)$.}:
\[f(\bar x)+ g^T(x-\bar x)\le f(x)\qquad\hbox{for all $x\in {\rm dom} f$}.\]
The subdifferential set of $f$ at $x=\bar x$, denoted by $\partial
f(\bar x)$, is the set of all subgradients of $f$ at $x=\bar x$.
Finally, we write {\it a.s.} for ``almost surely'', and use
$\hbox{Prob}(\mathcal{Z})$ and $\EXP{Z}$ to denote the probability of an
event $\mathcal{Z}$ and the expectation of a random variable $Z$,
	  respectively.
\section{Problem Formulation \us{and Background}}\label{sec:formulation}
\us{In this section, we begin by describing the problem and iterative
scheme of interest (Section~\ref{sec:prob}). This is followed by
Section~\ref{sec:adap-sa} where we provide a short description on
various adaptive schemes in the realm of stochastic approximation.}

\subsection{Problem Formulation}\label{sec:prob}
We consider the following stochastic optimization problem
\begin{equation}\label{eqn:problem}
\min_{x \in X} f(x)= \EXP{F(x,\xi)},
\end{equation}
where $F: \mathcal{D}\times\Omega \to\mathbb{R}$ is a function, the
set $\mathcal{D}\subseteq\mathbb{R}^n$ is open, and the set $X$ is
nonempty with $X\subset\mathcal{D}$. The vector {$\xi:\Omega
\rightarrow \Real^d$} is {a} random vector {with} a
probability distribution on a set $\Omega \subseteq \mathbb{R}^d$,
while the expectation $\EXP{F(x,\xi)}$ is taken with respect to $\xi$.
We use $X^*$ to denote the optimal set of
problem~\eqref{eqn:problem} and $f^*$ to denote its optimal value.
We assume the following:

\begin{assumption}\label{assum:convex}
The set $X\subset \mathcal{D}$ is convex and closed. The function
$F(\cdot,\xi)$ is convex on $\mathcal{D}$ for every {$\xi \in
\Omega$}, and the expected value $\EXP{F(x,\xi)}$ is finite for
every $x \in \mathcal{D}$.
\end{assumption}

Under Assumption~\ref{assum:convex}, the function $f$ is convex over
$X$ and the following relation holds
\begin{equation}\label{eqn:expdiffer}
\partial{f(x)} = \EXP{\partial_x F(x,\xi)}\qquad\hbox{for all }x\in\mathcal{D},
\end{equation}
where $\partial_x F(x,\xi)$ denotes the set of all subgradients of
$F(x,\xi)$ with respect to the variable $x$
(see~\cite{Bertsekas72,Bertsekas73}\footnote{In both of these articles,
 the analysis is for a function defined over $\mbR^n\times\Omega$, but
 {can be extended} to the case of a function defined over
 $\mathcal{D}\times\Omega$ for an open convex set
 $\mathcal{D}\subseteq\mbR^n$.}).

First, we will consider problem~(\ref{eqn:problem}) where $f$ is a
differentiable function with Lipschitz gradients.  Later, we will
allow the function $f$ to be nondifferentiable and we will consider a
local smoothing technique yielding a differentiable function that
approximates $f$ over $X$.  For this reason, we start our discussion by
focusing on a differentiable problem~(\ref{eqn:problem}) and the
following iterative algorithm:
\begin{equation}\label{eqn:algorithm}
\begin{split}
x_{k+1} & =\Pi_{X}\left(x_k-\g_k (\nabla f(x_k)+w_k)\right)
\qquad\hbox{for all }k\ge0,\cr w_k &= \nabla_x F(x_k,\xi_k)-\nabla
f(x_k).
\end{split}
\end{equation}
Here, $x_0\in X$ is a random initial point, $\gamma_k>0$ is a (deterministic) 
stepsize, and $w_k$ is the random vector given by 
the difference between the sampled gradient $\nabla_x F(x,\xi_k)$
and its expectation $\EXP{\nabla_x F(x,\xi)}$ evaluated at $x=x_k$.
Throughout the paper, we assume that $\EXP{\|x_0\|^2}<\infty$.

We let $\sF_k$ denote the history of the method up to time $k$, i.e., 
$\sF_k=\{x_0,\xi_0,\xi_1,\ldots,\xi_{k-1}\}$ for $k\ge 1$ and $\sF_0=\{x_0\}$. 
By Assumption~\ref{assum:convex} and
relation~\eqref{eqn:expdiffer}, it follows that $\nabla
f(x_k)=\EXP{\nabla_x F(x_k,\xi)}$ for a differentiable $F$, implying
that $w_k$ has zero-mean, i.e.,
\begin{equation}\label{eqn:zeromean}
\EXP{w_k\mid\sF_k}=0\qquad\hbox{for all } k\ge0.\end{equation}
Next, we state some additional assumptions on the stochastic gradient
error $w_k$ and the stepsize $\g_k$. 

\begin{assumption}\label{assum:step_error}
The stepsize is such that $\gamma_k>0$ for all $k$. Furthermore, the following 
hold:
\begin{enumerate}
 \item[(a)]
$\sum_{k=0}^\infty \gamma_k = \infty$ and $\sum_{k=0}^\infty
\gamma_k^2 < \infty$.
  \item[(b)] The stochastic errors $w_k$ satisfy
$\sum_{k=0}^\infty \gamma_k^2 \EXP{\|w_k\|^2\mid \sF_k}<\infty$ almost surely.
\end{enumerate}
\end{assumption}
Assumption~\ref{assum:step_error}(b) is satisfied, for example, when
$\sum_{k=0}^\infty \gamma_k^2 < \infty$ and the error $w_k$ is bounded almost 
surely, i.e., $\|w_k\|\le c$ for all $k$ and some scalar $c$ almost surely.

We use the following Lemma in establishing the convergence of
method~(\ref{eqn:algorithm}) (see~\cite{Polyak87}, page 50).

\begin{lemma}\label{lemma:supermartingale}
(Robbins-Siegmund) Let $v_k,$ $u_k,$ $\alpha_k,$ and  $\beta_k$ be
nonnegative random variables, and let the
following relations hold almost surely: 
\[\EXP{v_{k+1}\mid {\tilde \sF_k}} 
\le (1+\alpha_k)v_k - u_k + \beta_k \quad\hbox{ for all } k,\qquad
\sum_{k=0}^\infty \alpha_k < \infty,\qquad
\sum_{k=0}^\infty \beta_k < \infty,\] 
where $\tilde \sF_k$ denotes the collection $v_0,\ldots,v_k$, $u_0,\ldots,u_k$,
$\alpha_0,\ldots,\alpha_k$, $\beta_0,\ldots,\beta_k$. 
Then, almost surely we have 
\[\lim_{k\to\infty}v_k = v, \qquad \sum_{k=0}^\infty u_k < \infty,\]
where $v \geq 0$ is some random variable.
\end{lemma}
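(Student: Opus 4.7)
My plan is to reduce the given almost-supermartingale inequality to a genuine supermartingale so that Doob's convergence theorem applies. The key rescaling is $A_k=\prod_{j=0}^{k-1}(1+\alpha_j)^{-1}$ (with $A_0=1$); since $\sum_k\alpha_k<\infty$ almost surely, $A_k$ decreases to a strictly positive limit $A_\infty>0$ on a full-probability event $\Omega_1\subseteq\{\sum_k\alpha_k<\infty\}\cap\{\sum_k\beta_k<\infty\}$. Setting $\tilde v_k=A_k v_k$ and multiplying the hypothesis by $A_{k+1}=A_k/(1+\alpha_k)$ kills the multiplicative $\alpha_k$ and yields
$$\EXP{\tilde v_{k+1}\mid\tilde\sF_k}\le\tilde v_k-A_{k+1}u_k+A_{k+1}\beta_k.$$

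Next I would introduce
$$X_k\ :=\ \tilde v_k+\sum_{j=0}^{k-1}A_{j+1}(u_j-\beta_j),$$
which is $\tilde\sF_k$-measurable since every $u_j,\beta_j,\alpha_j$ with $j\le k$ lies in $\tilde\sF_k$. A short computation using the displayed inequality gives $\EXP{X_{k+1}\mid\tilde\sF_k}\le X_k$, so $X_k$ is a supermartingale with respect to $(\tilde\sF_k)$. It is not bounded below by a deterministic constant, because $X_k\ge-\sum_{j=0}^{k-1}A_{j+1}\beta_j$ and $\sum_j\beta_j$ is only a.s.\ finite. To get around this, for each integer $N\ge 1$ I would use the stopping time $\tau_N=\inf\{k:\sum_{j=0}^{k}\beta_j>N\}$, so that $X_{k\wedge\tau_N}+N$ is a nonnegative supermartingale and converges a.s.\ by Doob's theorem. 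Since $\Omega_1\subseteq\bigcup_N\{\tau_N=\infty\}$ a.s., letting $N\to\infty$ promotes this to an almost sure finite limit $X_\infty$ of $X_k$ on $\Omega_1$.

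The two conclusions then follow by bookkeeping. Because $\sum_jA_{j+1}\beta_j\le\sum_j\beta_j<\infty$ a.s., the partial sums of $A_{j+1}\beta_j$ converge, so $\tilde v_k+\sum_{j=0}^{k-1}A_{j+1}u_j$ also converges. The series $\sum_jA_{j+1}u_j$ has nonnegative terms and monotone partial sums, hence it converges; subtracting shows $\tilde v_k$ admits an a.s.\ limit, and dividing by $A_k\to A_\infty>0$ gives $v_k\to v$ with $v\ge0$ a.s. Finally, $A_{j+1}\ge A_\infty>0$ on $\Omega_1$, so $\sum_ju_j\le A_\infty^{-1}\sum_jA_{j+1}u_j<\infty$ a.s. The main obstacle is the middle step: one cannot directly apply the nonnegative-supermartingale convergence theorem to $X_k$ because its lower bound involves the a.s.-finite-but-not-uniformly-bounded quantity $\sum_j\beta_j$, and it is the stopping-time truncation $\tau_N$ together with a countable union argument over $N$ that makes the scheme go through.
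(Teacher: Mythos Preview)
The paper does not prove this lemma: it is quoted as the Robbins--Siegmund theorem with a citation to \cite{Polyak87}, page~50, and no argument is supplied. So there is no in-paper proof to compare against.

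Your proof is the standard one. The rescaling $A_k=\prod_{j<k}(1+\alpha_j)^{-1}$ to kill the multiplicative factor $(1+\alpha_k)$, the compensation $X_k=\tilde v_k+\sum_{j<k}A_{j+1}(u_j-\beta_j)$ to manufacture a supermartingale, the stopping time $\tau_N$ on the partial sums of $\beta_j$ to force a deterministic lower bound so that Doob's nonnegative-supermartingale convergence theorem applies, and the final bookkeeping using $A_k\downarrow A_\infty>0$ to pass from $\tilde v_k$ and $\sum_j A_{j+1}u_j$ back to $v_k$ and $\sum_j u_j$ --- these are exactly the ingredients of the original Robbins--Siegmund argument and of the version recorded in Polyak's book.

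One small technical point worth flagging: none of $v_k,u_k,\beta_k$ is assumed integrable, so the assertion ``$X_{k\wedge\tau_N}+N$ is a nonnegative supermartingale'' should be read in the generalized sense for nonnegative processes (conditional expectations taking values in $[0,\infty]$). You cannot literally invoke optional stopping on $X_k$ first and then add $N$, because $X_k$ need not be $L^1$. Instead, verify the supermartingale inequality for $Y_k:=X_{k\wedge\tau_N}+N\ge 0$ directly by splitting on $\{k<\tau_N\}$ and $\{k\ge\tau_N\}$; this goes through since all the pieces on the right-hand side are $\tilde\sF_k$-measurable and the generalized conditional expectation of the nonnegative $Y_{k+1}$ is always defined. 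A further harmless truncation on $\{v_0\le M\}$ then reduces to the integrable case. This is a routine technicality and does not affect the idea.
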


We also make use of the following result, which can be found
in~\cite{Polyak87} (see Lemma 11 in page 50).

\begin{lemma}\label{lemma:probabilistic_bound_polyak}
Let $\{v_k\}$ be a sequence of nonnegative random variables, where 
$\EXP{v_0} < \infty$, and let $\{\a_k\}$ and $\{\beta_k\}$
be deterministic scalar sequences such that:
\begin{align*}
& \EXP{v_{k+1}|v_0,\ldots, v_k} \leq (1-\alpha_k)v_k+\beta_k
\qquad a.s \ \hbox{for all }k\ge0, \cr
& 0 \leq \alpha_k \leq 1, \quad\ \beta_k \geq 0, 
\quad\ \sum_{k=0}^\infty \alpha_k =\infty, 
\quad\ \sum_{k=0}^\infty \beta_k < \infty, 
\quad\ \lim_{k\to\infty}\,\frac{\beta_k}{\alpha_k} = 0. 
\end{align*}
Then, $v_k \rightarrow 0$ almost surely, 
$\lim_{k\to\infty}\EXP{v_k}= 0$, and for any $\epsilon >0$,
\[\hbox{Prob}(v_j \leq \e \hbox{ for all }j \geq k) 
\geq 1 - \frac{1}{\e}\left(\EXP{v_k}+\sum_{i=k}^\infty \beta_i\right)
\qquad\hbox{for all $k>0$}.\]
\end{lemma}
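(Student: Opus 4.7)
The plan is to establish the three conclusions in sequence, using Lemma~\ref{lemma:supermartingale} for the almost-sure convergence, a direct unrolling argument for the expectation, and a supermartingale maximal inequality for the probability bound.

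For the almost-sure statement, I will apply Robbins--Siegmund to the hypothesis rewritten as $\EXP{v_{k+1}\mid v_0,\dots,v_k}\le (1+0)\,v_k - \alpha_k v_k + \beta_k$, i.e., with the role of $u_k$ in Lemma~\ref{lemma:supermartingale} played by $\alpha_k v_k$ and the role of the multiplier on $v_k$ played by $0$. The summability hypothesis $\sum_k \beta_k < \infty$ supplies the remaining ingredients of Lemma~\ref{lemma:supermartingale}, yielding $v_k\to v$ a.s.\ for some nonnegative $v$ and $\sum_k \alpha_k v_k<\infty$ a.s. On the event $\{v>0\}$, eventually $v_k\ge v/2$, whence $\sum_k\alpha_k v_k\ge(v/2)\sum_{k\ge K}\alpha_k=\infty$ because $\sum_k\alpha_k=\infty$, contradicting the a.s.\ summability. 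Hence $v=0$ a.s.

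For $\EXP{v_k}\to 0$, I take total expectations in the hypothesis to obtain the deterministic recursion $a_{k+1}\le(1-\alpha_k)a_k+\beta_k$ with $a_k:=\EXP{v_k}$ (and $a_0<\infty$). Unrolling gives
\[
a_{k+1}\le\Big(\prod_{i=0}^{k}(1-\alpha_i)\Big)a_0+\sum_{j=0}^{k}\Big(\prod_{i=j+1}^{k}(1-\alpha_i)\Big)\beta_j.
\]
Since $\alpha_i\in[0,1]$ and $\sum_i\alpha_i=\infty$, every product $\prod_{i=M}^{k}(1-\alpha_i)$ tends to $0$ as $k\to\infty$, so the first term vanishes and so does the sub-sum with $j<M$ for any fixed $M$. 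For the remaining tail, given $\eta>0$, choose $M$ so that $\beta_j/\alpha_j<\eta$ for $j\ge M$; then $\beta_j<\eta\alpha_j$ and the telescoping identity
\[
\sum_{j=M}^{k}\alpha_j\prod_{i=j+1}^{k}(1-\alpha_i)=1-\prod_{i=M}^{k}(1-\alpha_i)\le 1
\]
bounds the tail by $\eta$. Thus $\limsup_k a_k\le\eta$, and sending $\eta\downarrow 0$ gives $\EXP{v_k}\to 0$.

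For the probability bound I exploit a supermartingale hidden in the recursion. Define $Y_j:=v_j+\sum_{i=j}^{\infty}\beta_i$, which is nonnegative and integrable since $\sum_i\beta_i<\infty$ and $\EXP{v_j}<\infty$ (from the previous step). Using the hypothesis and $(1-\alpha_j)v_j\le v_j$,
\[
\EXP{Y_{j+1}\mid v_0,\dots,v_j}\le v_j+\beta_j+\sum_{i=j+1}^{\infty}\beta_i=Y_j,
\]
so $\{Y_j\}_{j\ge k}$ is a nonnegative supermartingale. Doob's maximal inequality then yields $\hbox{Prob}(\sup_{j\ge k}Y_j\ge\epsilon)\le\EXP{Y_k}/\epsilon$. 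Because $v_j\le Y_j$, the event $\{v_j>\epsilon\text{ for some }j\ge k\}$ is contained in $\{\sup_{j\ge k}Y_j\ge\epsilon\}$, and $\EXP{Y_k}=\EXP{v_k}+\sum_{i=k}^{\infty}\beta_i$; passing to complements produces the stated inequality. The main obstacle I anticipate is the expectation step: one must keep careful track of the telescoping identity when splitting the convolution-type sum to obtain a bound independent of $k$; the supermartingale construction underlying part~(3) is the conceptually decisive idea and, once identified, makes the probability bound almost immediate.
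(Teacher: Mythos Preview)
Your proof is correct. Note, however, that the paper does not actually prove this lemma: it is stated without proof and attributed to Polyak's monograph \cite{Polyak87} (Lemma~11, page~50), so there is no ``paper's own proof'' to compare against. Your three-part argument---Robbins--Siegmund for the almost-sure limit, the telescoping bound on the unrolled deterministic recursion for $\EXP{v_k}\to 0$, and the supermartingale $Y_j=v_j+\sum_{i\ge j}\beta_i$ together with Doob's maximal inequality for the tail-probability bound---is the standard route and is carried out cleanly. One minor remark: in Part~2 your use of ``$\beta_j<\eta\alpha_j$ for $j\ge M$'' implicitly interprets the hypothesis $\beta_k/\alpha_k\to 0$ as ``for every $\eta>0$ there exists $M$ with $\beta_j\le\eta\alpha_j$ for $j\ge M$,'' which is the intended reading and covers the edge case $\alpha_j=0$ (forcing $\beta_j=0$); you may wish to state this explicitly.
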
 

In Sections~\ref{sec:adap} and~\ref{sec:cascading}, we examine 
adaptive steplength schemes for a
strongly  convex function $f$ whose gradients $\nabla f$ are
	Lipschitz continuous over $X$ with constant $L$.
%
%
$X$ is defined as 

\subsection{Adaptive Stochastic Approximation Schemes}\label{sec:adap-sa}

Robbins and Monro~\cite{robbins51sa} proposed the first stochastic
approximation algorithm in 1951 while Kiefer and
Wolfowitz~\cite{Kiefer52} proposed a variant of this scheme in which
finite differences were employed to estimate the gradient.  Asymptotic
distributions of the Robbins-Monro scheme were first examined by
Chung~\cite{chung54stochastic}, leading to an asymptotic normality
result in the one-dimensional regime while generalizations were
subsequently studied by Sacks~\cite{sacks58asymptotic}.

A potential challenge in developing efficient implementations of
stochastic approximation implementations lies in choosing an appropriate
steplength sequence. Kesten~\cite{kesten58accelerated}, in 1957, suggested a 
technique where the steplength sequence adapts to the observed data, which was
further extended by Kushner and Gavin~\cite{kushner73extensions} to 
the multi-dimensional regime, \an{while its accelerations were studied 
in~\cite{delyon93accelerated}}. 
Sacks~\cite{sacks58asymptotic} proved that, under
suitable conditions, a choice of the form $a/k$ 
(where $k$ is the iterate index) 
is optimal from the standpoint of minimizing the asymptotic variance. Yet,
the challenge lies in estimating the ``optimal'' $a$.   Subsequently,
Ventner~\cite{ventner67extension} in what is possibly amongst 
the first {\em adaptive} steplength SA schemes, considered 
sequences  of the form $a_k/k$ where $a_k$ is updated by leveraging past 
information. Notably, Chung~\cite{chung54stochastic} also examined 
the asymptotic variance properties of SA
	when steplength choices of the form $a/k^{1-\alpha}$ with $\alpha <
	{1\over 2}$ are used. In related work on adaptive schemes, 
Lai and Robbins~\cite{lai79adaptive} considered
schemes of the form $a_k/k$ where $a_k$ is a strongly consistent
estimator of $\nabla f(x)$ in a stochastic root-finding problem. One
choice for obtaining $a_k$ is through the use of least-squares
estimators. Multivariate generalizations of this analysis were suggested
by Wei~\cite{wei87multivariate} in 1987 and again, it was observed that the 
Jacobian of the vector function assumes relevance in constructing efficient
steplength sequences.

An alternative to using a single sample was suggested by  
Spall~\cite{Spall1992} and relied on obtaining gradient estimates through a 
{\em simultaneous
	perturbation} of all the parameters.  An adaptive generalization
	of this scheme, proposed by the same
	author~\cite{spall00adaptive,spall09feedback}, employed an additional
	recursion to the standard projected gradient step that attempted to
	estimate the Jacobian in root finding problems or the Hessian in
	optimization problems. Accordingly, the modified update rule is of
	the form 
\begin{equation}\label{eqn:algorithm2}
x_{k+1}  =\Pi_{X}\left(x_k-\g_k  H_k^{-1}(\nabla f(x_k)+w_k)\right),
\end{equation}
where $H_k$ is an estimate of the Hessian matrix of the objective.
Clearly, this also falls under the regime of an {\em adaptive
steplength} scheme. Related adaptive schemes may also be found in the
work by Bhatnagar~\cite{Bhatnagar05adaptivemultivariate,
Bhatnagar07adaptivenewton-based}.

A final remark is in order regarding the key difference  between our
proposed schemes and past work.  A majority of the adaptive schemes in
the literature employ past information to update the steplength. One
such avenue involves developing estimates of the Hessian which is
subsequently used in scaling the gradient step appropriately. In the
sections to appear, we consider two very different approaches that are
linked by a crucial property: they rely on using algorithm and problem
parameters, and not sample points, to develop adaptive steplength
schemes.

\subsection{Smoothing Techniques}\label{sec:smooth}
One of the goals of this paper is to address stochastic optimization
	problems with nonsmooth integrands. Here, we provide some background
for accommodating nonsmoothness in optimization problems.  In
	deterministic regimes, subgradient methods and their incremental
		variants have proved popular
		(see~\cite{Nedic01,Nedic01a,bertsekas73descent}), as have 
bundle
		 methods~\cite{kiwiel}, amongst others. One approach for
		 managing nonsmoothness is through smoothing approaches.  For
		 instance, such avenues have allowed for the solution of
		 variational inequalities and complementarity
		 problems~\cite{facchinei02finite} as well as mathematical
		 programs with equilibrium
		 constraints~\cite{luo96mathematical}.
	
In this paper, we also adopt a smoothing technique which bears
little similarity to such approaches. We adopt a framework that can be
traced back to a class of {\em averaged} functions introduced by
Steklov~\cite{steklov1,steklov2} in 1907.  A general definition of such
an averaging over possibly discontinuous functions is provided
next~\cite{norkin93optimization}.  \begin{definition} Given a locally
integrable function $f:\Real^n\to \Real$ and a family of mollifiers
$\{p_{\epsilon}:\Real^n \to \Real_+, \epsilon > 0\}$ that satisfy $$
\int_{\Real^n} p_{\epsilon}(z) dz = 1, \qquad
\textrm{supp}(p_{\epsilon}) := \{
z \in \Real^n: p_{\epsilon}(z) > 0\} \subset \rho_{\epsilon} \mathbb{B}
\mbox{ with } \rho_{\epsilon} \downarrow 0 \hbox{ as } \epsilon \downarrow
0, $$ where $\mathbb{B}$ is a unit ball in $\Real^n$. Then  the
associated family $\{ \hat{f}_{\epsilon}, \epsilon > 0\}$ of averaged
functions  is defined by $$ \hat{f}_{\epsilon} := \int_{\Real^n} f(x+z)
p_{\epsilon} (z) dz = \int_{\Real^n} f(z) p_{\epsilon} (x-z) dz. $$
\end{definition} In effect, the {\em mollifier} is a probability density
function and the family of smoothed approximations, denoted by
$\{\hat{f}_{\epsilon},\epsilon>0\}$ must possess a host of convergence 
properties with respect to $f$ as $\epsilon \to 0$. For instance, if $f$ is a
continuous function then $\hat{f}_{\epsilon}$ converges uniformly to $f$
on every bounded subset of $\Real^n$. In the absence of continuity, this
cannot be guaranteed; yet, we may draw on epi-convergence
results~\cite{rockafellar98variational} for this class of functions may
be employed in an effort to establish convergence of the infima/minima.
These averaging functions have allowed for solving convex
nondifferentiable optimization
problems~\cite{ermoliev73limit,gaivoronski78nonstationary} and
discontinuous optimization problems~\cite{gupal77algorithm}, by
minimizing a sequence of averaged or smoothed functions. 

We pursue an alternative to solving a sequence of smoothed problems and
obtain an approximate solution by solving a single smoothed problem with
a fixed $\epsilon$ akin to that employed by Lakshmanan and
Farias~\cite{DeFarias08}. However, since we intend to leverage
stochastic approximation schemes of the form described earlier in this
paper, Lipschitz constants associated with the gradients are a requiem.
In~\cite{DeFarias08}, the authors obtain Lipschitz constants assuming
that the averaging is achieved through a normal distribution that
requires the function be defined everywhere. Instead of ``globally
smoothing'' the function, we employ a uniform distribution, referred to
as ``local smoothing.''

\section{A recursive steplength stochastic approximation scheme}
\label{sec:adap}
In this section, we introduce an adaptive stochastic approximation
scheme that overcomes certain challenges associated with implementing
standard diminishing steplength schemes and relies on the use of a
recursive rule for prescribing steplengths. We begin by examining the
standard stochastic gradient method for problem~\eqref{eqn:problem} in
Section~\ref{sec:preliminaries}.  In general, the convergence of this
scheme is guaranteed under the requirement that $\sum_{k = 1}^{\infty}
\gamma_k = \infty$ and $\sum_{k=0}^\infty \gamma_k^2 < \infty.$ A host
of choices exists with one possible choice being $\gamma_k = \theta/k.$
Yet, the choice of the appropriate $\theta$ can have a significant
impact on the performance of the algorithm.  Motivated by the desire to
minimize the ``expected error,'' we develop a recursive stochastic
approximation algorithm (referred to as the RSA scheme) in which the
steplength at a particular iteration is a function of the steplength at
the previous iteration and some problem parameters.  In
Section~\ref{sec:adaptive-self}, we motivate and introduce such a scheme
and proceed to develop the associated convergence theory in
Section~\ref{sec:convergence-self}.

\subsection{Preliminaries}~\label{sec:preliminaries}
We consider method~\eqref{eqn:algorithm} as applied to
problem~\eqref{eqn:problem} where $f$ has Lipschitz gradients.  The
method generates a sequence of iterates that converge to an optimal
solution almost-surely, as shown in the forthcoming proposition.  This
result is a straightforward extension of Theorem 1
in~\cite[Pg.~51]{Polyak87} which pertains to an unconstrained problem.

\begin{proposition}[Almost-sure convergence] \label{prop:lipschitzgrad}
Let Assumptions~\ref{assum:convex}--\ref{assum:step_error} hold, and
let $f$ be differentiable over the set $X$ with Lipschitz gradients.
Assume that the optimal set $X^*$ of problem~\eqref{eqn:problem} is nonempty. 
Then, the sequence $\{x_k\}$ generated by \eqref{eqn:algorithm} 
converges almost surely to some random point in $X^*$.
\end{proposition}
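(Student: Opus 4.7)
The plan is to mirror the classical unconstrained Polyak argument (Theorem~1, p.~51 in~\cite{Polyak87}) and to absorb the projection step via its nonexpansiveness. Fix $x^*\in X^*$. Since $x^*\in X$ and $\Pi_X$ is nonexpansive with respect to the Euclidean norm, the iterate \eqref{eqn:algorithm} gives
\[
\|x_{k+1}-x^*\|^2 \le \|x_k-x^*\|^2 - 2\gamma_k(\nabla f(x_k)+w_k)^T(x_k-x^*) + \gamma_k^2\|\nabla f(x_k)+w_k\|^2.
\]
I would then take conditional expectation with respect to $\sF_k$, recalling that $x_k$ is $\sF_k$-measurable and $\EXP{w_k\mid\sF_k}=0$ by \eqref{eqn:zeromean}, which kills the cross term $\EXP{w_k^T(x_k-x^*)\mid\sF_k}$ and lets me split $\EXP{\|\nabla f(x_k)+w_k\|^2\mid\sF_k}=\|\nabla f(x_k)\|^2+\EXP{\|w_k\|^2\mid\sF_k}$.

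Next I would invoke convexity of $f$ to obtain $\nabla f(x_k)^T(x_k-x^*)\ge f(x_k)-f^*\ge 0$, and handle the quadratic term $\|\nabla f(x_k)\|^2$ by writing $\nabla f(x_k)=\nabla f(x_k)-\nabla f(x^*)+\nabla f(x^*)$ and using the Lipschitz bound, giving $\|\nabla f(x_k)\|^2\le 2L^2\|x_k-x^*\|^2+2\|\nabla f(x^*)\|^2$. Combining yields
\[
\EXP{\|x_{k+1}-x^*\|^2\mid\sF_k} \le (1+2L^2\gamma_k^2)\|x_k-x^*\|^2 - 2\gamma_k(f(x_k)-f^*) + \beta_k,
\]
with $\beta_k := 2\gamma_k^2\|\nabla f(x^*)\|^2+\gamma_k^2\EXP{\|w_k\|^2\mid\sF_k}$. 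By Assumption~\ref{assum:step_error}(a)-(b), $\sum_k 2L^2\gamma_k^2<\infty$ and $\sum_k\beta_k<\infty$ almost surely, so Lemma~\ref{lemma:supermartingale} (Robbins-Siegmund) applies with $v_k=\|x_k-x^*\|^2$, $u_k=2\gamma_k(f(x_k)-f^*)$, $\alpha_k=2L^2\gamma_k^2$. This delivers two conclusions almost surely: $\|x_k-x^*\|^2$ converges to a (finite) random limit $v(x^*)$, and $\sum_k\gamma_k(f(x_k)-f^*)<\infty$.

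Because $\sum_k\gamma_k=\infty$, the latter forces $\liminf_{k\to\infty}(f(x_k)-f^*)=0$ almost surely. Since $\|x_k-x^*\|^2$ is a.s.\ convergent, the sequence $\{x_k\}$ is a.s.\ bounded, and $f$ is continuous on $X$ (being convex with Lipschitz gradients), so there is a subsequence $\{x_{k_j}\}$ converging a.s.\ to some random point $\bar x\in X$ with $f(\bar x)=f^*$, i.e.\ $\bar x\in X^*$. Applying the already-established conclusion with $x^*=\bar x$ (which is a legitimate random choice thanks to the contingent nature of the Robbins-Siegmund conclusion, or equivalently by choosing a countable dense subset of $X^*$ and arguing pathwise), the sequence $\|x_k-\bar x\|^2$ converges a.s.\ to a limit that must equal $0$ along $\{k_j\}$, hence $x_k\to\bar x$ almost surely.

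The only nontrivial wrinkle beyond the unconstrained proof is the quadratic term $\gamma_k^2\|\nabla f(x_k)\|^2$: unlike in the unconstrained setting, $\nabla f(x^*)$ need not vanish, so the Lipschitz splitting produces an extra $2\gamma_k^2\|\nabla f(x^*)\|^2$ that must be absorbed into the summable remainder $\beta_k$, which works because $\sum_k\gamma_k^2<\infty$. The second delicate point is upgrading subsequential convergence to full a.s.\ convergence; this is handled by the classical trick of feeding the accumulation point back into the Robbins-Siegmund conclusion, taking care that the null set on which any of the relevant limits fail is independent of $x^*$ when $x^*$ ranges over a countable dense subset of $X^*$.
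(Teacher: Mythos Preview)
Your proposal is correct and follows essentially the same route as the paper: nonexpansiveness of the projection, convexity to get $\nabla f(x_k)^T(x_k-x^*)\ge f(x_k)-f^*$, the Lipschitz bound to control $\|\nabla f(x_k)\|^2$ via $\|x_k-x^*\|^2$ plus the residual $\|\nabla f(x^*)\|^2$, Robbins--Siegmund, and the standard subsequence-to-full-sequence argument. The only cosmetic difference is that the paper bounds $\|\nabla f(x_k)+w_k\|^2\le 2\|\nabla f(x_k)-\nabla f(x^*)\|^2+2\|\nabla f(x^*)+w_k\|^2$ before taking conditional expectation, whereas you take the conditional expectation first and then split $\|\nabla f(x_k)\|^2$; your version yields a slightly sharper constant on the noise term but the argument is otherwise identical, and your remark about the countable dense subset of $X^*$ addresses a measurability point the paper leaves implicit.
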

\begin{proof}
By definition of the method and the nonexpansive property of the
projection operation, we obtain for any $x^*\in X^*$ and $k\ge0$,
\begin{align*}
\|x_{k+1}-x^*\|^2 & \leq \|x_k-x^*-\g_k(\nabla f(x_k)+w_k)\|^2 \cr
&= \|x_k-x^*\|^2-2\g_k(\nabla f(x_k)+w_k)^T(x_k-x^*)+\g_k^2\|\nabla
f(x_k)+w_k\|^2.
\end{align*}
By the convexity of $f$ and the gradient inequality, we have
\begin{align*}
\|x_{k+1}-x^*\|^2 & \leq  \|x_k-x^*\|^2-2\g_k(f(x_k)-f(x^*))-2\g_k
w_k^T(x_k-x^*) +\g_k^2\|\nabla f(x_k)+w_k\|^2.
\end{align*}
Since $\|a+b\|^2 \le 2\|a\|^2 +2\|b\|^2$ for any $a,b \in
\mathbb{R}^n$, by using $f^*=f(x^*)$, and by adding and subtracting
$\nabla f(x^*)$ in the last term, we obtain
\begin{align*}
\|x_{k+1}-x^*\|^2 & \leq  \|x_k-x^*\|^2-2\g_k(f(x_k)-f^*)-2\g_k
w_k^T(x_k-x^*) +2\g_k^2\|\nabla f(x_k)-\nabla f(x^*)\|^2 \cr
&+2\g_k^2\|\nabla f(x^*)+w_k\|^2.
\end{align*}
Taking the conditional expectation given $\sF_k$, using
$\EXP{w_k\mid \sF_k}=0$ (see Eq.~\eqref{eqn:zeromean}) and 
the Lipschitzian property of the gradient, we have
\begin{align*}
\EXP{\|x_{k+1}-x^*\|^2 \mid \sF_k}& \leq
(1+2L^2\g_k^2)\|x_k-x^*\|^2-2\g_k(f(x_k)-f^*)\cr
&+2\g_k^2\left(\|\nabla f(x^*)\|^2+\EXP{\|w_k\|^2\mid \sF_k}\right).
\end{align*}

Under Assumption~\ref{assum:step_error}, the conditions of
Lemma~\ref{lemma:supermartingale} are satisfied. Therefore, almost
surely, the  sequence $\{\|x_{k+1}-x^*\|\}$ is convergent for any
$x^*\in X^*$ and $\sum_{k=0}^\infty\g_k(f(x_k)-f^*)<\infty$. The
former relation implies that $\{x_k\}$ is bounded a.s., while the
latter implies $\liminf_{k\to\infty} f(x_k)=f^*$ a.s.\ in view of the condition
$\sum_{k=0}^\infty \g_k=\infty$.  Since the set $X$ is closed, 
all accumulation points of $\{x_k\}$ lie in $X$. Furthermore, 
since $f(x_k)\to f^*$ along a subsequence a.s., by continuity of $f$ 
it follows that $\{x_k\}$ has a subsequence converging to some random point 
in $X^*$ a.s. Moreover, since $\{\|x_{k+1}-x^*\|\}$ is convergent 
for any $x^*\in X^*$ a.s.,
the entire sequence $\{x_k\}$ converges to some random point in $X^*$ a.s.
\end{proof}

Under the Lipschitz continuity of the gradient and the strong convexity
of the objective, an expected error bound may also be provided for the
method.  During the development of the error bound, the following
intermediate result assumes relevance.

\begin{lemma}\label{lemma:Lipschitz_key}
Let Assumption \ref{assum:convex} hold, and let $f$ be differentiable
over the set $X$ with Lipschitz gradients with constant $L>0$. Also,
assume that the optimal set $X^*$ of problem~\eqref{eqn:problem} is
nonempty. Let the sequence $\{x_k\}$ be generated by algorithm
\eqref{eqn:algorithm} with any (deterministic) stepsize $\g_k>0$. Then,
for any $x^*\in X^*$ and any $k\ge0$, the following holds almost surely:
\begin{align*}
\EXP{\|x_{k+1}-x^*\|^2\mid\sF_k} \leq
 \|x_{k}-x^*\|^2 +\g_k^2 \EXP{\|w_k\|^2\mid \sF_k}
-\g_k(2-\g_k L) (x_k-x^*)^T(\nabla f(x_k)-\nabla f(x^*)).
\end{align*}
\end{lemma}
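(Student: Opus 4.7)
The plan is to start from the iterate update and sharpen the use of projection nonexpansiveness relative to what was done in Proposition~\ref{prop:lipschitzgrad}. The key enabling observation is that any $x^*\in X^*$ is a fixed point of the projected-gradient map: by the first-order optimality condition for problem~\eqref{eqn:problem}, $x^* = \Pi_X(x^* - \g_k\nabla f(x^*))$ for every $\g_k>0$. Writing both $x_{k+1}$ and $x^*$ as projections and invoking nonexpansiveness of $\Pi_X$ yields
\[\|x_{k+1}-x^*\|^2 \le \bigl\|(x_k-x^*) - \g_k(\nabla f(x_k)-\nabla f(x^*)) - \g_k w_k\bigr\|^2.\]
Note this is the step that lets $\nabla f(x^*)$ appear on equal footing with $\nabla f(x_k)$, which is exactly what is needed to reach the conclusion.

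Next I would expand the right-hand side and take the conditional expectation given $\sF_k$. Because $x_k$, $\nabla f(x_k)$ and $\nabla f(x^*)$ are $\sF_k$-measurable while $\EXP{w_k\mid\sF_k}=0$ by~\eqref{eqn:zeromean}, the two cross terms involving $w_k$ vanish and only $\g_k^2\EXP{\|w_k\|^2\mid\sF_k}$ survives from the quadratic. At this point the bound reads
\[\EXP{\|x_{k+1}-x^*\|^2\mid\sF_k} \le \|x_k-x^*\|^2 - 2\g_k(x_k-x^*)^T(\nabla f(x_k)-\nabla f(x^*)) + \g_k^2\|\nabla f(x_k)-\nabla f(x^*)\|^2 + \g_k^2\EXP{\|w_k\|^2\mid\sF_k}.\]

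The final step, and the only one that actually invokes the Lipschitz constant $L$, is the Baillon--Haddad co-coercivity inequality: for a convex $f$ with $L$-Lipschitz gradient, $\|\nabla f(x)-\nabla f(y)\|^2 \le L(\nabla f(x)-\nabla f(y))^T(x-y)$. Substituting this into the $\g_k^2\|\nabla f(x_k)-\nabla f(x^*)\|^2$ term and collecting the two contributions proportional to $(x_k-x^*)^T(\nabla f(x_k)-\nabla f(x^*))$ produces the factor $-\g_k(2-\g_k L)$ advertised in the lemma.

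The main obstacle I anticipate is justifying co-coercivity in the present setting, since Baillon--Haddad is typically stated for gradients Lipschitz on the whole space, while here $\nabla f$ is only assumed to be $L$-Lipschitz on $X\subset\mathcal{D}$. Because $\mathcal{D}$ is open and convex, however, the standard derivation of co-coercivity from the quadratic upper bound $f(y)\le f(x)+\nabla f(x)^T(y-x)+\tfrac{L}{2}\|y-x\|^2$ (which itself follows from Lipschitzness of $\nabla f$ along the segment joining $x$ and $y$) applies verbatim for any $x,y\in X$, so the inequality is available wherever we need it. Everything else is purely algebraic: norm expansion, the zero-mean property of $w_k$, and term collection.
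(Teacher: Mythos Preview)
Your proposal is correct and follows essentially the same route as the paper: the fixed-point characterization $x^*=\Pi_X(x^*-\g_k\nabla f(x^*))$, nonexpansiveness of $\Pi_X$, expansion and use of $\EXP{w_k\mid\sF_k}=0$, and then co-coercivity of $\nabla f$ to absorb the $\g_k^2\|\nabla f(x_k)-\nabla f(x^*)\|^2$ term. The paper simply cites Polyak for the co-coercivity step rather than discussing its validity on the restricted domain, so your extra justification there is a welcome addition rather than a departure.
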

\begin{proof} 
By the first-order optimality conditions, a vector $x^*$ is
optimal for the problem if and only if $x^*$ satisfies
\[x^*=\Pi_X(x^*-\g \nabla f(x^*))\qquad\hbox{for any }\g>0.\]
By the definition of the method and the nonexpansive property of the
projection operation, we obtain for all $k\ge0$,
\begin{equation*}
\begin{split}
\|x_{k+1}-x^*\|^2 
&=\|\Pi_X(x_k-\g_k(\nabla f(x_k)+w_k))-\Pi_X(x^*-\g_k \nabla f(x^*))\|^2 \\
&\le \|x_k-x^*-\g_k(\nabla f(x_k)+w_k-\nabla f(x^*))\|^2.
\end{split}
\end{equation*}
Taking the expectation conditioned on the past, and using
$\EXP{w_k\mid \sF_k}=0$ (cf.\ Eq.\ \eqref{eqn:zeromean}), we have
\begin{equation*}
\begin{split}
\EXP{\|x_{k+1}-x^*\|^2\mid\sF_k} &\le \|x_{k}-x^*\|^2 +
\gamma_k^2\|\nabla f(x_k)-\nabla f(x^*)\|^2 +\g_k^2
\EXP{\|w_k\|^2\mid \sF_k}\cr &-2\gamma_k (x_k-x^*)^T(\nabla
f(x_k)-\nabla f(x^*)).
\end{split}
\end{equation*}
The Lipschitz gradient property for a convex function is equivalent
to co-coercivity of the gradient map with constant $1/L$, {(see
\cite[Pg.~24, Lemma 2]{Polyak87})}, 
i.e., for all $x,y\in X$,
\begin{equation*}\label{co-coercivity}
\frac{1}{L}\,\|\nabla f(x)-\nabla f(y)\|^2 \le (x-y)^T(\nabla
f(x)-\nabla f(y)).
\end{equation*}
Therefore, for any $x^*\in X^*$ and any $k\ge 0$,
\[\EXP{\|x_{k+1}-x^*\|^2\mid\sF_k} 
\leq \|x_{k}-x^*\|^2 +\g_k^2 \EXP{\|w_k\|^2\mid \sF_k} 
-\g_k (2-\g_k L) (x_k-x^*)^T(\nabla f(x_k)-\nabla f(x^*)).\]
\end{proof}

{In what follows, we will often use a stronger version of 
Assumption~\ref{assum:step_error}(b), given as follows.

\begin{assumption}\label{assum:bounded_error}
The errors $w_k$ are such that 
for some $\nu >0$,
\[\EXP{\|w_k\|^2| \sF_k} \le \nu^2 \quad a.s.\ \hbox{for all $k\ge0$}.\]
\end{assumption}

Next, we  provide an error bound for algorithm~\eqref{eqn:algorithm}
under the assumption  that $f(x)$ is a strongly convex function with
	Lipschitz gradients. 
Note that requiring that $f(x)$ is strongly
	convex over a set $K$ follows if $F(x,\xi(\omega))$ is a strongly
	convex function for $\omega \in \bar{\Omega}$, where $\bar{\Omega}$
	is a set of positive measure defined as 
$$\bar{\Omega} \triangleq \left\{\omega: \exists \eta > 0,
	(y-x)^T(\nabla F(y,\xi(\omega)) -\nabla F(x,\xi(\omega))) \geq
	\eta\|x-y\|^2 \quad
	\hbox{for all }x,y \in K\right\}.$$
Less formally, we merely require that $F(.,\xi)$ is a strongly
convex function with positive, but arbitrarily small, probability to
ensure that $f(x)$ is strongly convex over $K$
(see~\cite{ravat09characterization}).  

\begin{lemma}[Strongly convex function with Lipschitz gradients]
\label{lemma:rel_bound}
Let Assumptions~\ref{assum:convex}--\ref{assum:step_error} hold. Also,
let $f$ be differentiable over the set $X$ with Lipschitz gradients
with constant $L>0$ and strongly convex with constant $\eta >0$. 
Then, the sequence $\{x_k\}$
generated by algorithm (\ref{eqn:algorithm}) converges almost surely
to the unique optimal solution of problem (\ref{eqn:problem}).
Furthermore, if the stepsize satisfies $0 <\gamma_k \le \frac{2}{L}$ 
for all $k \ge0$, we then have:
\begin{itemize}
\item[(a)]
The following relation holds almost surely:
\[\EXP{\|x_{k+1}-x^*\|^2\mid\sF_k} \leq
 \left(1-\g_k(2-\g_k L)\right) 
\|x_{k}-x^*\|^2 +\g_k^2 \EXP{\|w_k\|^2\mid \sF_k}\qquad\hbox{for all }k\ge0.\]
\item[(b)] If Assumption~\ref{assum:step_error}(b) is replaced 
with Assumption~\ref{assum:bounded_error}, 
then the following relation holds almost surely:
\[\EXP{\|x_{k+1}-x^*\|^2} \le (1-\eta \g_k (2-\g_k
L))\EXP{\|x_k-x^*\|^2}+ \g_k^2\nu^2 \quad \hbox {for all }  k \geq 0.\]
Moreover,
$\lim_{k\to\infty}\EXP{\|x_k-x^*\|^2}= 0$, 
and for every $\epsilon >0$, 
\[\hbox{Prob}\left(\|x_j-x^*\|^2 \leq \e \hbox{ for all } j \geq k\right) 
\geq 1- \frac{1}{\e}\,
\left(\EXP{\|x_k-x^*\|^2}+\nu^2\sum_{i=k}^\infty \g_i^2\right)
\qquad\hbox{for all $k>0$}.\]
\end{itemize}
\end{lemma}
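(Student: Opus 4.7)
The statement bundles three claims: almost-sure convergence to the unique minimizer, a conditional one-step contraction bound (a), and an unconditional recursion together with convergence in expectation and a probability bound (b). I would dispatch the almost-sure convergence first: strong convexity implies that the optimal set is a singleton $X^*=\{x^*\}$, so Proposition~\ref{prop:lipschitzgrad} (whose hypotheses are exactly those available here) immediately gives $x_k\to x^*$ a.s.\ without any extra work. The two quantitative parts are where the real effort lies.

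For part (a), the plan is to start from Lemma~\ref{lemma:Lipschitz_key}, which already provides
\[
\EXP{\|x_{k+1}-x^*\|^2\mid\sF_k} \le \|x_k-x^*\|^2 + \g_k^2\EXP{\|w_k\|^2\mid\sF_k} - \g_k(2-\g_k L)(x_k-x^*)^T(\nabla f(x_k)-\nabla f(x^*)).
\]
Strong convexity of $f$ with constant $\eta$ is equivalent to strong monotonicity of $\nabla f$, i.e.\ $(x-x^*)^T(\nabla f(x)-\nabla f(x^*))\ge \eta\|x-x^*\|^2$. Because $0<\g_k\le 2/L$, the coefficient $\g_k(2-\g_k L)$ is nonnegative, so substituting this lower bound into the term with a minus sign preserves the inequality and produces a bound of the form $(1-\eta\,\g_k(2-\g_k L))\|x_k-x^*\|^2+\g_k^2\EXP{\|w_k\|^2\mid\sF_k}$ almost surely, yielding the claimed contraction-type recursion.

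For part (b), I would replace Assumption~\ref{assum:step_error}(b) with Assumption~\ref{assum:bounded_error}, then take unconditional expectations in the bound from (a). The bounded-variance hypothesis lets the stochastic noise term be replaced by $\g_k^2\nu^2$, yielding the deterministic recursion $\EXP{\|x_{k+1}-x^*\|^2} \le (1-\eta\,\g_k(2-\g_k L))\EXP{\|x_k-x^*\|^2}+\g_k^2\nu^2$. The finishing step is a direct application of Lemma~\ref{lemma:probabilistic_bound_polyak} with $\alpha_k:=\eta\,\g_k(2-\g_k L)$ and $\beta_k:=\nu^2\g_k^2$, reading off both $\EXP{\|x_k-x^*\|^2}\to 0$ and the tail-probability bound stated in the lemma.

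The one step that requires care, and that I would treat as the main obstacle, is verifying the hypotheses of Lemma~\ref{lemma:probabilistic_bound_polyak} for the chosen $\alpha_k,\beta_k$. Specifically: (i) $\alpha_k\in[0,1]$, using $\eta\le L$ (the standard fact that the strong-convexity constant is bounded by the Lipschitz constant) together with $\g_k\le 2/L$ to get $\alpha_k\le \eta/L\le 1$; (ii) $\sum_k\alpha_k=\infty$, which follows because $\sum\g_k^2<\infty$ forces $\g_k\to 0$, so eventually $2-\g_k L\ge 1$ and the divergence of $\alpha_k$ reduces to $\sum\g_k=\infty$ from Assumption~\ref{assum:step_error}(a); (iii) $\sum_k\beta_k<\infty$, immediate from $\sum\g_k^2<\infty$; and (iv) $\beta_k/\alpha_k=\nu^2\g_k/(\eta(2-\g_k L))\to 0$, again using $\g_k\to 0$. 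Once these are in hand, the conclusions of Lemma~\ref{lemma:probabilistic_bound_polyak} give exactly the two assertions in (b), completing the proof.
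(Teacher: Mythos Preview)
Your proposal is correct and follows essentially the same route as the paper: invoke Proposition~\ref{prop:lipschitzgrad} for almost-sure convergence, combine Lemma~\ref{lemma:Lipschitz_key} with strong monotonicity of $\nabla f$ to obtain the conditional recursion in (a), then take expectations under Assumption~\ref{assum:bounded_error} and apply Lemma~\ref{lemma:probabilistic_bound_polyak} after checking its hypotheses with $\alpha_k=\eta\g_k(2-\g_k L)$ and $\beta_k=\nu^2\g_k^2$. Your verification of $\alpha_k\le 1$ via the maximum of $\g(2-\g L)$ at $\g=1/L$ is slightly more explicit than the paper's, but the arguments are otherwise identical.
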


\begin{proof}
The existence and uniqueness of the optimal solution of problem
(\ref{eqn:problem}) is guaranteed by the strong convexity of $f(x)$. 
The convergence of the method follows by Proposition~\ref{prop:lipschitzgrad}.
To establish the relation in part (a) for the expected value 
$\EXP{\|x_{k+1}-x^*\|^2}$, we use Lemma~\ref{lemma:Lipschitz_key},
which implies for the optimal $x^*$ and all $k\ge0$,
\begin{align*}
\EXP{\|x_{k+1}-x^*\|^2\mid\sF_k} \leq
 \|x_{k}-x^*\|^2 +\g_k^2 \EXP{\|w_k\|^2\mid \sF_k}
-\g_k(2-\g_k L) (x_k-x^*)^T(\nabla f(x_k)-\nabla f(x^*)).
\end{align*}
By the strong convexity of $f(x)$, we have   
$(x-y)^T(\nabla f(x)-\nabla f(y)) \ge \eta\|x-y\|^2$ for all $x,y \in X$,
which when combined with the preceding relation implies for all $k\ge0$,
\begin{align}\label{eqn:oneo}
\EXP{\|x_{k+1}-x^*\|^2\mid\sF_k} \leq
 \left(1-\g_k\eta (2-\g_k L)\right) 
\|x_{k}-x^*\|^2 +\g_k^2 \EXP{\|w_k\|^2\mid \sF_k},
\end{align}
thus showing the relation in part (a).

The relation in part (b), follows from inequality~\eqref{eqn:oneo} by
using Assumption~\ref{assum:bounded_error} and by
taking the total expectation. 
To show the other results in part (b), we apply 
Lemma~\ref{lemma:probabilistic_bound_polyak}. For this, we need 
to verify that all the conditions of 
Lemma~\ref{lemma:probabilistic_bound_polyak} hold. 
Since $0 <\gamma_k \le \frac{2}{L}$, it follows $\eta \g_k (2-\g_k L)\ge0$. 
Also, in view of $\eta \leq L$, we have $\eta \g_k (2-\g_k L) \leq
1$.
Obviously, $\nu^2\g_k^2 \geq 0$ for all $k$. Since
Assumption~\ref{assum:step_error}(a) holds, we have
$\sum_{k=0}^\infty\eta \g_k (2-\g_k L)=\infty$ and
$\sum_{k=0}^\infty \eta \g_k^2<\infty$. Furthermore, since $\g_k\to0$,
we have
\[\lim_{k\to\infty} \frac{\nu^2\g_k^2}{\eta \g_k (2-\g_k L)} 
=\lim_{k\to\infty}\frac{\nu^2\g_k}{\eta(2-\g_k L)}= 0.\] 
Hence, the conditions of Lemma~\ref{lemma:probabilistic_bound_polyak} hold 
and the stated results follow.
\end{proof}

Lemma~\ref{lemma:rel_bound} will be employed in developing our adaptive
stepsize schemes. Before proceeding, we make the following comment
regarding the lemma. 

{\em Remark 1}: The result in part (a) of Lemma~\ref{lemma:rel_bound}
is similar to a result in~\cite{Nemirovski09},
which was derived by requiring  only the strong convexity of the
function $f$.  Here, we make the additional assumption that the
gradients are Lipschitz continuous and this assumption gains relevance
when we employ local random smoothing in Section~\ref{sec:localrand}.
Furthermore, our result depends on the expectation of gradient errors,
$\EXP{\|w_k\|^2}$, with $w_k$ defined in~\eqref{eqn:algorithm}.  Note
that, in contrast, the result in~\cite{Nemirovski09} depends on the
expectation of the subgradient norms, $\EXP{\|G(x,\xi)\|^2}$, where
$G(x,\xi)\in\partial_x F(x,\xi)$.

\subsection{A recursive steplength scheme}~\label{sec:adaptive-self}
A challenge associated with the implementation of diminishing steplength
schemes lies in determining an appropriate sequence $\{\g_k\}$. 
The key result of this section is the motivation and introduction of a scheme 
that {\em adaptively} optimizes the steplength from iteration to iteration.  
Our adaptive scheme relies on the minimization of a suitably defined error 
function at each step. We start with the relation in part (b) of  
Lemma~\ref{lemma:rel_bound}: 
\begin{equation}\label{rate_nu}
\EXP{\|x_{k+1}-x^*\|^2} \le (1-\eta \g_k (2-\g_k
L))\EXP{\|x_k-x^*\|^2}+ \g_k^2\nu^2 \quad \hbox {for all }  k \geq 0.
\end{equation}
When the stepsize is further restricted so that $0 < \g_k
\le\frac{1}{L}$, we have
\[1-\eta \g_k (2-\g_k L) \leq 1-\eta  \g_k.\]
Thus, for  $0 < \g_k \le\frac{1}{L}$, inequality (\ref{rate_nu}) yields
 \begin{equation}\label{rate_nu_est}
\EXP{\|x_{k+1}-x^*\|^2} 
\le (1-\eta \g_k)\EXP{\|x_k-x^*\|^2}+ \g_k^2\nu^2\qquad\hbox{for all }k \geq 0.
\end{equation}

We now use relation~\eqref{rate_nu_est} to develop an adaptive stepsize
procedure. Loosely speaking for the moment, let us view the quantity
$\EXP{\|x_{k+1}-x^*\|^2}$ as an error $e_{k+1}$ of the method arising
from the use of the stepsize values $\g_0,\g_1,\ldots, \g_k$. Also,
	 consider the worst case error which is the case when
	 ~\eqref{rate_nu_est} holds with equality.  Thus, in the worst case,
	 the error satisfies the following recursive relation:
\[e_{k+1}(\g_0,\ldots,\g_k) = (1-\eta \gamma_k) e_k(\g_0,\ldots,\g_{k-1}) 
+ \gamma_k^2 \nu^2.\]
Then, it seems natural to investigate if the stepsizes
$\g_0,\g_1\ldots,\g_k$ can be selected so as to minimize the error
$e_{k+1}$. It turns out that this can indeed be achieved and minimizing
the error $e_{k+2}$ at the next iteration can also be done by selecting
$\g_{k+1}$ as a function of only the most recent stepsize $\g_k$.
To formalize the above discussion, 
we define real-valued error functions $e_k(\g_0,\ldots,\g_{k-1})$ as follows:
\begin{align}\label{def:e_k}
  e_{k}(\g_0,\ldots,\g_{k-1})
& \triangleq (1-\eta \g_{k-1})e_{k-1}(\g_0,\ldots,\g_{k-2}) +\g_{k-1}^2\nu^2
\qquad\hbox{for $k\ge 1$},
\end{align}
where $e_0$ is a positive scalar, $\eta$ is the strong convexity parameter 
and $\nu^2$ is the upper bound for the second moments of the error 
norms $\|w_k\|$.

In what follows, we consider the sequence $\{\g^*_k\}$ given by 
\begin{align}
& \gamma_0^*=\frac{\eta}{2\nu^2}\,e_0\label{eqn:optimal_self_adaptive_2}
\\
& \gamma_{k}^*=\gamma_{k-1}^*\left(1-\frac{\eta}{2}\gamma_{k-1}^*\right) 
\quad \hbox{for all }k\ge 1.\label{eqn:optimal_self_adaptive}
\end{align}
We often abbreviate $e_k(\g_0,\ldots,\g_{k-1})$ by $e_k$
whenever this is unambiguous.  We show that the stepsizes $\g_i$,
$i=0,\ldots,k-1,$ minimize the errors
$e_k$ over an $(0,\frac{1}{L}]^{k}$, where $L$ is the Lipschitz constant.  In
particular, we have the following result.

\begin{proposition}\label{prop:rec_results}
Let $e_k(\g_0,\ldots,\g_{k-1})$ be defined as in~\eqref{def:e_k},
where $e_0>0$ is such that $\frac{\eta}{2\nu^2}\,e_0\leq \frac{1}{L}$,
with $L$ being the Lipschitz constant for the gradients of $f$.
Let the sequence $\{\g^*_k\}$ be given by 
\eqref{eqn:optimal_self_adaptive_2}--\eqref{eqn:optimal_self_adaptive}.
Then, the following hold:
\begin{itemize}
\item [(a)] The error $e_k$  satisfies
\begin{align*}
e_k(\g_0^*,\ldots,\g_{k-1}^*) = \frac{2\nu^2}{\eta}\,\g_k^*
\qquad\hbox{for all $k\ge0$}.
\end{align*}
\item [(b)] For each $k\ge 1$, the vector 
$(\gamma_0^*, \gamma_1^*,\ldots,\gamma_{k-1}^*)$ 
is the minimizer of the function $e_k(\g_0,\ldots,\g_{k-1})$ over the set
$$\mathbb{G}_k 
\triangleq \left\{\alpha \in \Real^k : 0 < \alpha_j \leq \frac{1}{L}
\hbox{ for }j =1,\ldots, k\right\}.$$
More precisely, for any $k\ge 1$ and any 
$(\g_0,\ldots,\g_{k-1})\in \mathbb{G}_k$, we have
	\begin{align*}
	e_{k}(\g_0,\ldots,\g_{k-1}) 
         - e_{k}(\g_0^*,\ldots,\g_{k-1}^*) \geq \nu^2(\g_{k-1}-\g_{k-1}^*)^2.
	\end{align*}	
\item[(c)] 
The vector $\gamma^*=(\gamma_0^*, \gamma_1^*,\ldots,\gamma_{k-1}^*)$ 
is a stationary point of function $e_k(\gamma_0,
			\gamma_1,\ldots,\gamma_{k-1})$ over the set 
$\mathbb{G}_k$.	
\end{itemize}
\end{proposition}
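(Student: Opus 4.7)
The plan is to treat the three parts in sequence: part (a) by a direct induction on $k$, part (b) by deriving a one-step identity for the optimality gap and then inducting, and part (c) as an immediate consequence of (a) and (b).

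For (a), the base case $k=0$ is literally the defining relation \eqref{eqn:optimal_self_adaptive_2}, rewritten as $e_0 = \tfrac{2\nu^2}{\eta}\gamma_0^*$. For the inductive step, assuming $e_k(\gamma_0^*,\ldots,\gamma_{k-1}^*) = \tfrac{2\nu^2}{\eta}\gamma_k^*$, I would substitute into the recursion~\eqref{def:e_k} evaluated at $\gamma^*$ and simplify:
\[
e_{k+1}(\gamma^*) = (1-\eta\gamma_k^*)\tfrac{2\nu^2}{\eta}\gamma_k^* + (\gamma_k^*)^2\nu^2 = \tfrac{2\nu^2}{\eta}\gamma_k^*\bigl(1-\tfrac{\eta}{2}\gamma_k^*\bigr) = \tfrac{2\nu^2}{\eta}\gamma_{k+1}^*,
\]
the last equality being the update rule~\eqref{eqn:optimal_self_adaptive}. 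A preliminary observation (itself a short induction) is that $\{\gamma_k^*\}$ is positive and nonincreasing, hence remains in $(0,1/L]$; this uses the hypothesis $\gamma_0^*\leq 1/L$ together with the standard fact $\eta\leq L$ for a strongly convex function with Lipschitz gradients.

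For (b), let $\Delta_k \triangleq e_k(\gamma_0,\ldots,\gamma_{k-1}) - e_k(\gamma_0^*,\ldots,\gamma_{k-1}^*)$. Subtracting the two instances of~\eqref{def:e_k}, inserting the identity $e_{k-1}(\gamma^*) = \tfrac{2\nu^2}{\eta}\gamma_{k-1}^*$ from (a), and carefully grouping cross-terms in $\gamma_{k-1}$ and $\gamma_{k-1}^*$ causes the linear-in-$\gamma_{k-1}$ pieces to combine with $\nu^2(\gamma_{k-1}^2 - (\gamma_{k-1}^*)^2)$ into a perfect square, yielding
\[
\Delta_k = (1-\eta\gamma_{k-1})\,\Delta_{k-1} + \nu^2(\gamma_{k-1}-\gamma_{k-1}^*)^2.
\]
Since $(\gamma_0,\ldots,\gamma_{k-1})\in\mathbb{G}_k$ and $\eta\leq L$ force $(1-\eta\gamma_{k-1})\geq 0$, and the initial gap $\Delta_0 = 0$, a one-line induction produces $\Delta_k\geq 0$ for every $k$. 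Plugging this back into the displayed identity then delivers the claimed quadratic lower bound $\Delta_k \geq \nu^2(\gamma_{k-1}-\gamma_{k-1}^*)^2$. I expect the main obstacle to be this algebraic collapse: it succeeds only because part (a) supplies precisely the coefficient needed to annihilate the residual linear terms in $\gamma_{k-1}-\gamma_{k-1}^*$, and one must not lose a sign when regrouping.

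For (c), stationarity over $\mathbb{G}_k$ is immediate from the previous parts. On the one hand, (b) shows that $\gamma^*$ globally minimizes the smooth function $e_k$ over the convex set $\mathbb{G}_k$, so the first-order optimality conditions are satisfied. On the other hand, one can verify it directly using (a): differentiating~\eqref{def:e_k} yields $\partial e_k/\partial \gamma_{k-1} = -\eta\,e_{k-1} + 2\gamma_{k-1}\nu^2$, which vanishes at $\gamma^*$ by (a); an analogous cancellation in each remaining coordinate follows by the chain rule applied through the $\{e_j\}$ hierarchy, so in fact $\nabla e_k(\gamma^*) = 0$.
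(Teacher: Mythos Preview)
Your proposal is correct and follows the same inductive strategy as the paper for all three parts. Your exact recursion $\Delta_k = (1-\eta\gamma_{k-1})\Delta_{k-1} + \nu^2(\gamma_{k-1}-\gamma_{k-1}^*)^2$ in (b) and your chain-rule computation $\partial e_k/\partial\gamma_\ell = \bigl(\prod_{i=\ell+1}^{k-1}(1-\eta\gamma_i)\bigr)(-\eta e_\ell + 2\nu^2\gamma_\ell)$ in (c) are modest streamlinings of the paper's more explicit expansions, but the underlying ideas are identical.
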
 
\begin{proof}
(a) \ We use induction on $k$ to prove our result. 
Note that the result holds trivially for 
$k=0$ from \eqref{eqn:optimal_self_adaptive_2}.  Next, assume that we have
$e_k(\g_0^*,\ldots,\g_{k-1}^*) = \frac{2\nu^2}{\eta}\,\g_k^*$ for some $k$,
and consider the case for $k+1$. By the definition of the error $e_k$ 
in~\eqref{def:e_k}, we have
\[e_{k+1}(\g_0^*,\ldots,\g_{k}^*) = (1-\eta\g^*_{k})
e_{k}(\g^*_0,\ldots,\g_{k-1}^*)+\g_k^*\nu^2
=(1-\eta\g^*_{k})\frac{2\nu^2}{\eta}\,\g_k^*+\g_k^*\nu^2,\]
where the second equality follows by the inductive hypothesis.
Hence,
\[e_{k+1}(\g_0^*,\ldots,\g_{k}^*) =\frac{2\nu^2}{\eta}\,\g_k^*
\left(1-\eta \g^*_k+\frac{\eta}{2}\,\g^*_k\right)
=\frac{2\nu^2}{\eta}\,\g_k^*\left(1-\frac{\eta}{2}\,\g^*_k\right)
=\frac{2\nu^2}{\eta}\,\g_{k+1}^*,\]
where the last equality follows by the definition of $\g^*_{k+1}$ in
\eqref{eqn:optimal_self_adaptive}.

\noindent (b) \ We now show that $(\gamma^*_0,\g_1^*,\ldots,\g_{k-1}^*)$  
minimizes the error $e_k$ for all $k\ge1$. 
We again use mathematical induction on $k$. 
By the definition of the error $e_1$ and  the relation
$e_1(\g_0^*)=\frac{2\nu^2}{\eta}\g_1^*$ shown in part (a), we have
\[e_1(\g_0) -e_1(\g_0^*) =(1-\eta \g_0)e_0+\nu^2 \g_0^2
-\frac{2\nu^2}{\eta}\g_1^*.\]
Using $\g_1^*=\g_0^*\left(1-\frac{\eta}{2}\g_0^*\right)$, we obtain
\[e_1(\g_0) -e_1(\g_0^*) =(1-\eta \g_0)e_0+\nu^2 \g_0^2
-\frac{2\nu^2}{\eta}\g_0^*\left(1-\frac{\eta}{2}\g_0^*\right)
=(1-\eta \g_0)\frac{2\nu^2}{\eta}\,\g_0^*+\nu^2 \g_0^2
-\frac{2\nu^2}{\eta}\g_0^*\left(1-\frac{\eta}{2}\g_0^*\right),\]
where the last equality follows from $e_0=\frac{2\nu^2}{\eta}\,\g_0^*.$
Thus, we have
\[e_1(\g_0) -e_1(\g_0^*)=-2\nu^2\g_0\g_0^*+\nu^2 \g_0^2
+\nu^2\,(\g_0^*)^2=\nu^2\left(\g_0-\g_0^*\right)^2.\]

Now suppose that $e_k(\g_0,\ldots,\g_{k-1}) 
\geq e_k(\g_0^*,\ldots,\g_{k-1}^*)$ holds for some $k$ and
any $(\g_0,\ldots,\g_{k-1}) \in\mathbb{G}_k$. We want to show that 
$e_{k+1}(\g_0,\ldots,\g_{k}) \geq e_{k+1}(\g_0^*,\ldots,\g_{k}^*)$ 
holds as well for all $(\g_0,\ldots,\g_{k}) \in\mathbb{G}_{k+1}$. 
To simplify the notation
we use $e_{k+1}^*$ to denote the error $e_{k+1}$ evaluated at 
$(\g^*_0,\g_1^*,\ldots,\g_{k}^*)$, and $e_{k+1}$ when 
evaluating at an arbitrary vector 
$(\g_0,\g_1,\ldots,\g_{k})\in\mathbb{G}_{k+1}$.
Using (\ref{def:e_k})  and part (a), we have
\begin{align*}
e_{k+1}- e_{k+1}^*= (1-\eta \gamma_{k})e_k
+\nu^2\gamma_{k}^2 -\frac{2\nu^2}{\eta} \gamma_{k+1}^*.
\end{align*}
Under the inductive hypothesis, we have $e_k\ge e_k^*$.
Using this, the relation $e_k^*=\frac{2\nu^2}{\eta}\g_k^*$
of part (a) and  the definition of $\g_{k+1}^*$, we obtain
\begin{align*}
e_{k+1} - e_{k+1}^* 
 \geq (1-\eta \gamma_{k})\frac{2\nu^2}{\eta}\g_k^*+\nu^2\gamma_{k}^2 
-\frac{2\nu^2}{\eta} \gamma_{k}^*\left(1-\frac{\eta}{2}\g_k^*\right) 
 = \nu^2(\g_k-\g_k^*)^2.
\end{align*}
Hence, we have $e_k(\g_0,\ldots,\g_{k-1})-e_k(\g^*_0,\ldots,\g_{k-1}^*)\ge
\nu^2(\g_k-\g_k^*)^2$ for all $k\ge1$ and all 
$(\g_0,\ldots,\g_{k-1})\in\mathbb{G}_k$. Therefore, for all $k\ge1$, the vector
$(\g_0,\ldots,\g_{k-1})\in\mathbb{G}_k$ is a minimizer of the error $e_k$.

\noindent (c) \ By the choice of $e_0$, we have $0<\g_0^*< \frac{1}{L}$.
Observe that since $\eta\le L$, it follows that $0< \g^*_1\le\g^*_0$, 
and by induction we can see that $0< \g^*_k\le\g^*_{k-1}$ for all $k\ge1$. 
Thus, $(\g^*_0,\ldots,\g_{k-1}^*)\in\mathbb{G}_k$ for all $k\ge1$. 

Now, we proceed by induction on
$k$.  For $k=1$, we have 
\[\frac{\partial e_1}{\partial \gamma_0}=-\eta e_0+2\g_0\nu^2.\]
Thus, the derivative of $e_1$ vanishes at $\g_0^*=\frac{\eta}{2\nu}\, e_0$,
which satisfies $0<\g_0^*\leq \frac{1}{L}$ by the choice of $e_0$.
Furthermore, note that the function $e_1(\g_0)$ is convex in $\g_0$.
Hence, $\g_0^*=\frac{\eta}{2\nu}\, e_0$ is the stationary point of $e_1$ 
over the entire real line. Suppose now that for $k\ge1$, the vector 
$(\g_0,\ldots,\g_{k-1})$ is the minimizer of $e_k$ over the set $G_k$. 
Let us now consider the case of $k+1$. The partial derivative of $e_{k+1}$ 
with respect to $\g_\ell$ is given by 
\begin{align*}
\frac{\partial e_{k+1}}{\partial \gamma_\ell}
& =-\eta e_0\prod_{i=0,i\neq
	\ell}^{k}(1-\eta \gamma_i)-\eta \nu^2
	\sum_{i=0}^{\ell-1}
\left(\gamma_i^2\prod_{j=i+1, j\neq \ell}^{k}(1-\eta
				\gamma_j)\right)+2\nu^2\gamma_{\ell}
\prod_{i=\ell+1}^{k}(1-\eta \gamma_i),  
\end{align*}
where $0\le \ell\le k-1.$ By factoring out the common term 
$\prod_{i=\ell+1}^{k}(1-\eta\gamma_i)$, 
we obtain
\begin{align}\label{eqn:one}
\frac{\partial e_{k+1}}{\partial \gamma_\ell}  
=\left[-\eta \left( e_0 \prod_{i=0}^{\ell-1}(1-\eta \gamma_i) + \nu^2
\sum_{i=0}^{\ell-2}\left(\gamma_i^2\prod_{j=i+1}^{\ell-1}(1-\eta
\gamma_j)\right) +\nu^2\gamma_{\ell-1}^2 \right)
+2\nu^2\gamma_{\ell}\right]\prod_{i=\ell+1}^{k}(1-\eta\gamma_i).
\end{align}
From the  definition of $e_k$ in~\eqref{def:e_k} we can see that  
\begin{align}\label{eqn:recursive}
e_{k+1}(\g_0,\ldots,\g_{k}) = e_0 \prod_{i=0}^{k}(1-\eta \gamma_i)+\nu^2
\sum_{i=0}^{k-1}\left(\gamma_i^2\prod_{j=i+1}^{k}(1-\eta
	\gamma_j)\right)+\nu^2\gamma_{k}^2.
\end{align}
By combining relations~\eqref{eqn:one} and~\eqref{eqn:recursive}, we obtain
for all $\ell=0,\ldots,k-1,$
\[\frac{\partial e_{k+1}}{\partial \gamma_\ell}  
=\left(-\eta e_\ell(\g_0,\ldots,\g_{\ell-1}\right) 
+2\nu^2\gamma_{\ell})\prod_{i=\ell+1}^{k}(1-\eta\gamma_i),\]
where for $\ell=0$, we have $e_\ell(\g_0,\ldots,\g_{\ell-1})=e_0$.
By part (a), there holds $-\eta e_\ell(\g_0^*,\ldots,\g_{\ell-1}^*)
+2\nu^2\gamma_{\ell}^*=0$, thus showing that 
$\frac{\partial e_{k+1}}{\partial \gamma_\ell}$ vanishes at
$(\g^*_0,\ldots,\g_k^*)\in\mathbb{G}_{k+1}$ for all $\ell=0,\ldots,k-1$.

Finally, we consider the partial derivative of $e_{k+1}$ 
with respect to $\g_{k}$, for which we have 
\begin{align*}
\frac{\partial e_{k+1}}{\partial \gamma_k} 
& =-\eta e_0\prod_{i=0}^{k-1}(1-\eta \gamma_i)-\eta \nu^2 
\sum_{i=0}^{k-2}\left(\gamma_i^2\prod_{j=i+1 }^{k-1}(1-\eta \gamma_j)\right)
-\eta \nu^2\gamma_{k-1}^2+2\nu^2\gamma_{k}. 
\end{align*}
Using relation~\eqref{eqn:recursive}, we obtain
\[\frac{\partial e_{k+1}}{\partial \gamma_{k}} 
=-\eta e_{k}(\g_0,\ldots,\g_{k-1})+2\nu^2\gamma_{k}.\]
By part (a), we have 
$-\eta e_k(\g_0^*,\ldots,\g_{k-1}^*)+2\nu^2\gamma_{k}^*=0$, thus showing that 
$\frac{\partial e_{k+1}}{\partial \gamma_k}$ vanishes at
$(\g^*_0,\ldots,\g_k^*)\in\mathbb{G}_{k+1}$. 
Thus, by induction we have that 
$(\g^*_0,\ldots,\g_k^*)$ is a stationary point
of $e_{k+1}$ in the set $\mathbb{G}_{k+1}$.
\end{proof}

We observe that in Proposition~\us{\ref{prop:rec_results}},
the minimizer $(\g_0^*,\ldots,\g_{k-1}^*)$ of the function $e_k$
over the set $\mathbb{G}_k$ is unique up to scaling  by a factor $\beta<1$.
Specifically, the solution $(\g_0^*,\ldots,\g_{k-1}^*)$ is obtained
for an initial error $e_0>0$ satisfying $e_0<\frac{2\nu^2}{\eta L}.$ 
Suppose that in the definition of the sequence $\{\g_k^*\}$ instead of
$e_0$ we use $\beta e_0$ for some $\beta\in(0,1)$. Then it can be seen
(by following the proof) that, for the resulting sequence,
Proposition~\ref{prop:rec_results} would still hold.

\subsection{Convergence theory}~\label{sec:convergence-self}
We next show that the proposed RSA approximation scheme  discussed in
Section~\ref{sec:adaptive-self} leads to a convergent algorithm. We
prove this in a more general setting for a stepsize with a form similar
to that seen in constructing the optimal choice.  
The following proposition holds for any stepsize of a form  similar to 
the optimal scheme of~\eqref{eqn:optimal_self_adaptive}. 

\begin{proposition}[Global convergence of RSA scheme]
\label{prop:self_adaptive}\label{prop:rec_convergence}
Let Assumptions~\ref{assum:convex} and~\ref{assum:bounded_error} hold.
Let the function $f$ be differentiable over the set $X$ with Lipschitz
gradients and the optimal solution set of problem~(\ref{eqn:problem}) be
nonempty.  Assume that the stepsize sequence $\{\g_k\}$ is generated by
the following self-adaptive scheme:
\begin{align}\label{eqn:self_adaptive}
\gamma_{k}=\gamma_{k-1}(1-c\gamma_{k-1}) 
\qquad \hbox{for all }k \geq 1,
\end{align}
where $c>0$ is a scalar and the initial stepsize is such that
$0<\g_0<\frac{1}{c}$. Then, the sequence $\{x_k\}$ generated by
algorithm (\ref{eqn:algorithm}) converges almost surely to a random
point that belongs to the optimal set.
\end{proposition}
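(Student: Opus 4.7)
The plan is to verify that the recursively defined stepsize sequence $\{\gamma_k\}$ satisfies Assumption~\ref{assum:step_error}, at which point Proposition~\ref{prop:lipschitzgrad} delivers the desired almost-sure convergence. Since Assumption~\ref{assum:bounded_error} gives $\EXP{\|w_k\|^2\mid\sF_k}\le\nu^2$ a.s., part (b) of Assumption~\ref{assum:step_error} reduces to $\sum_k \gamma_k^2<\infty$, so the entire task collapses to showing two classical properties of the deterministic sequence: $\sum_{k=0}^\infty\gamma_k=\infty$ and $\sum_{k=0}^\infty\gamma_k^2<\infty$.

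First I would establish by induction that $\gamma_k\in(0,1/c)$ and that the sequence is strictly decreasing: if $0<\gamma_{k-1}<1/c$, then $1-c\gamma_{k-1}\in(0,1)$ and hence $0<\gamma_k=\gamma_{k-1}(1-c\gamma_{k-1})<\gamma_{k-1}$. Being positive and monotone, $\gamma_k\to\gamma^*$ for some $\gamma^*\ge 0$, and passing to the limit in the recursion forces $\gamma^*=\gamma^*(1-c\gamma^*)$, so $\gamma^*=0$.

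The key technical step is to control the rate of decay. I would pass to the reciprocal and compute
\begin{equation*}
\frac{1}{\gamma_k}-\frac{1}{\gamma_{k-1}}
=\frac{\gamma_{k-1}-\gamma_k}{\gamma_{k-1}\gamma_k}
=\frac{c\gamma_{k-1}^2}{\gamma_{k-1}\gamma_k}
=\frac{c}{1-c\gamma_{k-1}}.
\end{equation*}
Telescoping gives $\frac{1}{\gamma_k}=\frac{1}{\gamma_0}+\sum_{j=0}^{k-1}\frac{c}{1-c\gamma_j}$. Since each term in the sum is at least $c$, we get $1/\gamma_k\ge 1/\gamma_0+ck$, hence $\gamma_k\le 1/(ck)$ for $k\ge 1$, which yields $\sum\gamma_k^2<\infty$. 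On the other hand, because $\gamma_j\le\gamma_0$, we have $c/(1-c\gamma_j)\le c/(1-c\gamma_0)$, so $1/\gamma_k\le 1/\gamma_0+ck/(1-c\gamma_0)$ and therefore $\gamma_k\ge C/(k+C')$ for explicit constants $C,C'>0$ depending only on $c$ and $\gamma_0$; this lower bound is comparable to $1/k$ and hence $\sum\gamma_k=\infty$.

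With these two summability conditions in hand, Assumption~\ref{assum:step_error}(a) holds, and Assumption~\ref{assum:bounded_error} implies $\sum_k\gamma_k^2\EXP{\|w_k\|^2\mid\sF_k}\le\nu^2\sum_k\gamma_k^2<\infty$ a.s., verifying Assumption~\ref{assum:step_error}(b). Invoking Proposition~\ref{prop:lipschitzgrad} then concludes that $\{x_k\}$ converges a.s.\ to a random point in $X^*$. The only real obstacle is the two-sided $\Theta(1/k)$ estimate for $\gamma_k$; the reciprocal-telescoping trick above is what makes it routine.
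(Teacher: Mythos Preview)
Your proof is correct, and the overall architecture---reduce to Assumption~\ref{assum:step_error} and invoke Proposition~\ref{prop:lipschitzgrad}---is the same as the paper's. The difference lies in how the two summability conditions are verified.

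For $\sum_k\gamma_k^2<\infty$, the paper telescopes the recursion itself: since $\gamma_{k-1}-\gamma_k=c\gamma_{k-1}^2$, summing gives $\gamma_0-\gamma_k=c\sum_{i=0}^{k-1}\gamma_i^2$, and letting $k\to\infty$ yields the exact identity $\sum_{i=0}^\infty\gamma_i^2=\gamma_0/c$. For $\sum_k\gamma_k=\infty$, the paper argues by contradiction via the product form $\gamma_{k+1}=\gamma_0\prod_{i=0}^k(1-c\gamma_i)$: since $\gamma_k\to0$ this product tends to $0$, but if $\sum_i\gamma_i<\infty$ then the tail products would be bounded below by $1-c\sum_{i\ge j}\gamma_i>0$ for large $j$, a contradiction.

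Your reciprocal-telescoping device is a genuinely different and arguably cleaner route: it delivers the two-sided estimate $\gamma_k=\Theta(1/k)$ in one computation, from which both summability conditions are immediate. This is more quantitative than what the paper proves (the paper never extracts a rate for $\gamma_k$), at the modest cost of not recovering the exact value $\sum\gamma_k^2=\gamma_0/c$. Either approach is perfectly adequate for the proposition as stated.
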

\begin{proof} We employ Proposition~\ref{prop:lipschitzgrad}.  
To apply this proposition, it suffices to verify that 
Assumption~\ref{assum:step_error} holds. First we show that 
$\sum_{i=0}^{\infty}{\gamma_i}=\infty$. From (\ref{eqn:self_adaptive})
we obtain
\[\prod_{\ell=1}^{k+1}\gamma_{\ell}=\left(\prod_{i=0}^{k}\gamma_i\right)
\prod_{i=0}^{k}(1-c\gamma_i).\]
By dividing both sides by $\left(\prod_{i=1}^{k}\gamma_i\right)$, it follows 
that
\begin{align}\label{eqn:rec_prod} 
\gamma_{k+1}=\gamma_0\prod_{i=0}^{k}(1-c\gamma_i).
\end{align}
Since $\g_0\in(0,\frac{1}{c})$, from (\ref{eqn:self_adaptive})
it follows that $\{\gamma_k\}$ is positive nonincreasing sequence. 
Therefore, the limit $\lim_{k\to\infty} \gamma_k$ exists and 
it is less than $\frac{1}{c}$. Thus, by taking the limit 
in~(\ref{eqn:self_adaptive}),  we obtain $\lim_{k\to\infty}\g_k =0$. 
Then, by taking limits in~(\ref{eqn:rec_prod}), we further 
obtain \[\lim_{k\rightarrow \infty}\prod_{i=0}^{k}(1-c\gamma_i) =0.\]

To arrive at a contradiction suppose that 
$\sum_{i=0}^{\infty}{\gamma_i}<\infty$. Then, there is an $\e\in (0,1)$ 
such that for $j$ sufficiently large, we have
\[c\sum_{i=j}^{k} \gamma_i\le\e \quad \hbox{for all }k \geq j.\]
Since $\prod_{i=j}^{k}(1-c\gamma_i) \geq 1 - c\sum_{i=j}^{k} \gamma_i $
for all $j<k$, by letting $k \rightarrow \infty$, we obtain
for all $j$ sufficiently large,
\[\prod_{i=j}^{\infty}(1-c\gamma_i) 
\geq 1 - c\sum_{i=j}^{\infty} \gamma_i\ge 1-\e>0.\]
This, however, contradicts the fact 
$\lim_{k\rightarrow \infty}\prod_{i=0}^{k}(1-c\gamma_i) =0.$
Therefore, we conclude that $\sum_{i=0}^{\infty}{\gamma_i}=\infty$.

Now we show that $\sum_{i=0}^{\infty}{\gamma_i}^2 < \infty$. 
From (\ref{eqn:self_adaptive}) we have
\begin{align*}
\gamma_{k}=\gamma_{k-1}-c\gamma_{k-1}^2 \quad \hbox{ for all } k \geq 1.
\end{align*}
Summing the preceding relations, we obtain
\begin{align*}
 \gamma_{k}=\gamma_0-c\sum_{i=0}^{k-1}{\gamma_i}^2 
\quad \hbox{ for all } k \geq 1.
\end{align*}
By taking limits and recalling that $\lim_{k \to \infty} \gamma_k = 0$, 
we obtain
\begin{align*}
\sum_{i=0}^{\infty}{\gamma_i}^2=\frac{\gamma_0}{c}< \infty.
\end{align*} 
Assumption \ref{assum:bounded_error} and relation
$\sum_{i=0}^{\infty}{\gamma_i}^2 < \infty$ yield
$\sum_{k=0}^\infty \gamma_k^2 \EXP{\|w_k\|^2\mid \sF_k}<\infty$. Hence, 
Assumption~\ref{assum:step_error} holds.
\end{proof}

Note that Proposition~\ref{prop:self_adaptive} applies to algorithm
(\ref{eqn:algorithm}) with the stepsize sequence $\{\g_k^*\}$ generated by 
the recursive scheme (\ref{eqn:optimal_self_adaptive}). 
Thus, we immediately have the following corollary.

\begin{corollary}[Convergence of \us{RSA} scheme]
\label{prop:rate_self_adaptive}
Let Assumptions~\ref{assum:convex} and \ref{assum:bounded_error} hold.
Let the function $f$ be differentiable over the set $X$ with Lipschitz 
gradients with constant $L>0$ and strongly convex with parameter $\eta>0$. 
Let the stepsize sequence $\{\g_k^*\}$ be  generated by the recursive scheme
(\ref{eqn:optimal_self_adaptive}) with $e_0=\EXP{\|x_0-x^*\|^2}$. 
If $\frac{\eta}{2\nu^2}\EXP{\|x_0-x^*\|^2}<\frac{1}{L},$
then the sequence $\{x_k\}$ generated by algorithm (\ref{eqn:algorithm}) 
converges almost surely to the unique optimal solution $x^*$ of 
problem~(\ref{eqn:problem}). 
\end{corollary}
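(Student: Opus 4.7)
The plan is to apply Proposition~\ref{prop:self_adaptive} (global convergence of the RSA scheme) to the specific recursion~\eqref{eqn:optimal_self_adaptive}. That proposition establishes a.s. convergence of $\{x_k\}$ to some point in $X^*$ whenever the stepsizes satisfy $\g_k = \g_{k-1}(1-c\g_{k-1})$ with $c>0$ and $0 < \g_0 < 1/c$. So the corollary essentially requires matching~\eqref{eqn:optimal_self_adaptive} to this template and confirming the initial-condition bound, after which strong convexity promotes the generic ``random point in $X^*$'' conclusion to convergence to the unique minimizer.

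First, I would read off from~\eqref{eqn:optimal_self_adaptive} that the relevant constant is $c = \eta/2 > 0$, and from~\eqref{eqn:optimal_self_adaptive_2} that $\g_0^* = \frac{\eta}{2\nu^2}\,\EXP{\|x_0-x^*\|^2}$. Positivity of $\g_0^*$ is immediate from $\eta,\nu>0$ and the assumption $\EXP{\|x_0\|^2}<\infty$ (together with $X^*$ nonempty).

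Second, I would verify the upper bound $\g_0^* < 1/c = 2/\eta$. The hypothesis of the corollary gives the stronger inequality $\g_0^* < 1/L$, and since strong convexity with parameter $\eta$ together with Lipschitz continuity of $\nabla f$ with constant $L$ forces $\eta \le L$ (a standard consequence of co-coercivity already invoked in Lemma~\ref{lemma:Lipschitz_key}), we have $1/L \le 1/\eta < 2/\eta$. Hence $0 < \g_0^* < 1/c$, as required by Proposition~\ref{prop:self_adaptive}.

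Third, under Assumptions~\ref{assum:convex} and~\ref{assum:bounded_error} and the differentiability/Lipschitz-gradient hypotheses, Proposition~\ref{prop:self_adaptive} then yields that $\{x_k\}$ converges a.s. to some random point in $X^*$. To upgrade ``some random point in $X^*$'' to ``the unique $x^*$,'' I would invoke the strong convexity of $f$, which guarantees that $X^*$ is a singleton $\{x^*\}$. The main (minor) obstacle is simply the bookkeeping step $\eta \le L$ used to convert the stated hypothesis into the form demanded by Proposition~\ref{prop:self_adaptive}; once that is noted, the rest is a direct invocation of previously established results.
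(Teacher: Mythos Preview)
Your proposal is correct and follows essentially the same route as the paper: invoke strong convexity for existence and uniqueness of $x^*$, then apply Proposition~\ref{prop:self_adaptive} with $c=\eta/2$. The paper's own proof is just two sentences and leaves implicit the verification that $0<\g_0^*<1/c$; you spell this out via $\g_0^*<1/L\le 1/\eta<2/\eta$, which is exactly the right check.
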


\begin{proof} The existence and uniqueness of 
the optimal solution follows by the strong convexity assumption. 
Almost sure convergence follows by 
Proposition~\ref{prop:self_adaptive}. 
\end{proof}

Note that when the set $X$ is bounded, in 
Proposition~\ref{prop:rate_self_adaptive} we may use 
$e_0=\max_{x,y\in X}\|x-y\|^2$ and the results will hold as long as 
$\frac{\eta}{2\nu^2} \max_{x,y\in X}\|x-y\|^2<\frac{1}{L}.$

In the following, \an{we discuss a recursive stepsize for
algorithm~(\ref{eqn:algorithm}) as applied to} 
a nonsmooth but strongly convex function
$f(x)=\EXP{F(x,\xi)}$. Let  $G(x,\xi)$ be a subgradient vector of 
$F(x,\xi)$ with respect to $x$, i.e.,  
$G(x,\xi)\in \partial F(x,\xi)$. 
Assume that there is a positive number $M$ such that 
\[\EXP{\|G(x,\xi)\|^2}\leq M^2 \quad \hbox{for all }x \in X.\] 
\an{We have the following convergence result,
which obviously also holds for smooth problems}.

\begin{proposition}[Convergence of RSA with a nonsmooth objective]
\label{prop:recursive_scheme}
Consider problem (\ref{eqn:problem}) and let Assumption~\ref{assum:convex}
hold. Also, let the set $X$ be compact and the function $f$ be strongly convex 
over $X$ with constant $\eta$. Assume that there is a scalar $M>0$
such that $\EXP{\|G(x,\xi)\|^2}\leq M^2$ for all $x\in X$.
Consider the following algorithm:
\begin{equation}\label{eqn:algorithm_shapiro}
\begin{split}
x_{k+1} & =\Pi_{X}\left(x_k-\g_k G(x_k,\xi_k)\right),
\end{split}
\end{equation}
where $x_0\in X$ is a random initial point independent of $\{\xi_k\}$
and $\gamma_k$ is a (deterministic) stepsize. Consider the self-adaptive stepsize
sequence $\{\g_k^*\}$ defined by 
\begin{align*}
& \gamma_0^*=\frac{\eta}{M^2}D^2,\cr
& \gamma_{k}^*=\gamma_{k-1}^*(1-\eta\gamma_{k-1}^*) 
\qquad \hbox{for all }k\ge1,
\end{align*}
where $D=\max_{x,y\in X}\|x-y\|$. 
Assuming that $\frac{\eta D^2}{M^2} < \frac{1}{2}$, we have
\begin{align*}
\EXP{\|x_k-x^*\|^2} \leq \frac{M^2}{\eta}\g_k^* 
\qquad \hbox{for all }k \geq 1.
\end{align*}
\end{proposition}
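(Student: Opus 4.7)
The plan is to adapt the smooth-case argument behind Proposition~\ref{prop:rec_results} to this nonsmooth setting by deriving a one-step recursion of exactly the form used in~\eqref{rate_nu_est}, and then inducting. First I would write $G(x_k,\xi_k)=g_k+w_k$ with $g_k:=\EXP{G(x_k,\xi_k)\mid\sF_k}$ and $w_k:=G(x_k,\xi_k)-g_k$. By~\eqref{eqn:expdiffer} and the independence of $\xi_k$ from $\sF_k$, we have $g_k\in\partial f(x_k)$ and $\EXP{w_k\mid\sF_k}=0$. Expanding $\|x_{k+1}-x^*\|^2$ via nonexpansiveness of $\Pi_X$ and taking the conditional expectation given $\sF_k$ kills the cross term involving $w_k$, producing
\[\EXP{\|x_{k+1}-x^*\|^2\mid\sF_k}\;\le\;\|x_k-x^*\|^2-2\g_k^* g_k^T(x_k-x^*)+(\g_k^*)^2\EXP{\|G(x_k,\xi_k)\|^2\mid\sF_k}.\]

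Next I would exploit strong convexity together with optimality of $x^*$ over $X$. By first-order optimality there exists $g^*\in\partial f(x^*)$ with $(g^*)^T(x_k-x^*)\ge 0$ for all $x_k\in X$; combining this with strong convexity at $(x_k,x^*)$ yields $f(x_k)-f(x^*)\ge\tfrac{\eta}{2}\|x_k-x^*\|^2$. Adding the reverse strong-convexity inequality $g_k^T(x_k-x^*)\ge f(x_k)-f(x^*)+\tfrac{\eta}{2}\|x_k-x^*\|^2$ valid for $g_k\in\partial f(x_k)$ gives the key lower bound $g_k^T(x_k-x^*)\ge\eta\|x_k-x^*\|^2$. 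Substituting this bound, invoking $\EXP{\|G(x_k,\xi_k)\|^2\mid\sF_k}\le M^2$, and taking total expectation produces the clean scalar recursion
\[\EXP{\|x_{k+1}-x^*\|^2}\;\le\;(1-2\eta\g_k^*)\,\EXP{\|x_k-x^*\|^2}+(\g_k^*)^2 M^2.\]

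I would then conclude by induction on $k$. The base case is immediate: since $X$ has diameter $D$, we have $\EXP{\|x_0-x^*\|^2}\le D^2=\tfrac{M^2}{\eta}\g_0^*$ by the choice of $\g_0^*$. For the inductive step, assuming $\EXP{\|x_k-x^*\|^2}\le\tfrac{M^2}{\eta}\g_k^*$ and $1-2\eta\g_k^*\ge 0$, the recursion gives
\[(1-2\eta\g_k^*)\,\frac{M^2}{\eta}\g_k^*+(\g_k^*)^2 M^2=\frac{M^2}{\eta}\g_k^*\bigl(1-\eta\g_k^*\bigr)=\frac{M^2}{\eta}\g_{k+1}^*,\]
where the last equality uses the defining recursion for $\g_{k+1}^*$, closing the induction.

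The main obstacle is maintaining the sign condition $1-2\eta\g_k^*\ge 0$ along the entire trajectory, since otherwise the inductive substitution reverses the inequality. This is precisely where the hypothesis $\eta D^2/M^2=\g_0^*<\tfrac{1}{2}$ enters: it forces $\{\g_k^*\}$ to be positive and nonincreasing through the recursion, so that the multiplier $(1-2\eta\g_k^*)$ is controlled uniformly in $k$ by its value at $k=0$. A secondary but routine point is the measurability issue in asserting $g_k=\EXP{G(x_k,\xi_k)\mid\sF_k}\in\partial f(x_k)$ for the random iterate $x_k$; this follows from~\eqref{eqn:expdiffer} applied conditionally, using that $x_k$ is $\sF_k$-measurable and $\xi_k$ is independent of $\sF_k$.
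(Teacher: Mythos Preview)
Your argument is correct and follows essentially the same route as the paper: derive the one-step contraction $\EXP{\|x_{k+1}-x^*\|^2}\le(1-2\eta\g_k^*)\EXP{\|x_k-x^*\|^2}+(\g_k^*)^2M^2$ and then close by induction, exactly as Proposition~\ref{prop:rec_results}(a) does in the smooth case with the substitution $2\nu^2\mapsto M^2$ (and the induced change $\eta/2\mapsto\eta$ in the recursion). The paper's proof is merely a terse pointer to that substitution, so you have unrolled what the paper leaves implicit.

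One small wrinkle worth tightening: the hypothesis $\g_0^*=\eta D^2/M^2<\tfrac12$ by itself only literally gives $1-2\eta\g_0^*\ge0$ when $\eta\le1$; what the induction actually needs is $\eta\g_0^*<\tfrac12$, i.e., $\eta^2D^2/M^2<\tfrac12$. This is an ambiguity already present in the statement rather than a flaw in your reasoning, and you correctly flagged the nonnegativity of $1-2\eta\g_k^*$ as the crux.
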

\begin{proof}
The proof is based on verifying that, for the algorithm 
in~\eqref{eqn:algorithm_shapiro}, Proposition~\ref{prop:rec_results} 
holds, where $2\nu^2$ is replaced by $M^2$ and $e_0=D^2$. 
Then, the rest of the proof is similar to 
that of Proposition~\ref{prop:rate_self_adaptive}.
\end{proof}

\section{A cascading steplength stochastic approximation scheme}
\label{sec:cascading}
In Section~\ref{sec:adap}, we presented a stochastic approximation
scheme in which the sequence of steplengths is determined via a
recursion that relies on optimizing the error estimates. A key benefit
of such a recursion is that the steplength choice is not left to the
user. In this section, we introduce an alternate avenue for specifying
steplengths that also considers a diminishing steplength framework but
uses a markedly different approach for determining the steplength.  In
particular, the scheme relies on reducing the steplength at a set of
epochs while the steplengths are maintained as constant between these
epochs. The details of this stochastic approximation scheme (called the
cascading steplength stochastic approximation (CSA) scheme) are
presented in Section~\ref{casc:des} while convergence theory is provided
in Section~\ref{casc:conv}. 

\subsection{A cascading steplength scheme}\label{casc:des}
Our technique is based on the properties derived from problems
possessing strongly convex objectives. Specifically, we obtain the
following result from the inequality in Lemma \ref{lemma:rel_bound} when
the stepsize is maintained as constant.

\begin{proposition}\label{prop:casc_inequ}
Let Assumptions~\ref{assum:convex} and~\ref{assum:bounded_error} hold.
Also, let $f$ be differentiable over the set $X$ with Lipschitz
gradients with constant $L>0$ and strongly convex with constant $\eta
>0$.  Let the sequence $\{x_k\}$ be generated by 
\eqref{eqn:algorithm} with constant stepsize $\g_k=\g$ for all $k \geq
0$, where $\gamma\in (0,\frac{2}{L})$. Then, we have
\begin{equation}\label{constant-rate}
\EXP{\|x_k-x^*\|^2} \le q(\g)^k \EXP{\|x_0-x^*\|^2}+
\left(\frac{1-q(\g)^k}{1-q(\g)}\right)\g^2\nu^2,
\end{equation}
where $q(\g)=1-\eta \g (2-\g L)$ and $x^*$ is the optimal solution of 
problem~\eqref{eqn:problem}.
\end{proposition}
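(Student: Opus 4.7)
The plan is to invoke Lemma~\ref{lemma:rel_bound}(b) with the constant stepsize $\gamma_k=\gamma$ for all $k$, and then unroll the resulting one-step recursion in closed form. First I would observe that the hypotheses of Lemma~\ref{lemma:rel_bound}(b) are satisfied here: Assumption~\ref{assum:convex} holds, Assumption~\ref{assum:bounded_error} replaces Assumption~\ref{assum:step_error}(b), $f$ is strongly convex with Lipschitz gradients, and the stepsize restriction $\gamma\in(0,2/L)$ guarantees that $\gamma(2-\gamma L)>0$. Consequently the lemma yields the linear recursion
\[
\EXP{\|x_{k+1}-x^*\|^2}\;\le\;q(\gamma)\,\EXP{\|x_k-x^*\|^2}+\gamma^2\nu^2,
\qquad q(\gamma)=1-\eta\gamma(2-\gamma L),
\]
for every $k\ge 0$.

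Next I would verify that $q(\gamma)\in[0,1)$, so that a geometric-sum formula is available. The strict upper bound $q(\gamma)<1$ follows directly from $\eta\gamma(2-\gamma L)>0$ for $\gamma\in(0,2/L)$. The lower bound $q(\gamma)\ge 0$ follows from $\eta\le L$ (a standard consequence of strong convexity together with the Lipschitz gradient property) combined with the elementary inequality $\gamma(2-\gamma L)\le 1/L$ on $(0,2/L)$, which together give $\eta\gamma(2-\gamma L)\le\eta/L\le 1$.

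Then I would finish by induction on $k$. The base case $k=0$ is trivial since the right-hand side reduces to $\EXP{\|x_0-x^*\|^2}$. For the inductive step, applying the one-step recursion to the inductive hypothesis gives
\[
\EXP{\|x_{k+1}-x^*\|^2}
\le q(\gamma)\!\left(q(\gamma)^k\EXP{\|x_0-x^*\|^2}+\tfrac{1-q(\gamma)^k}{1-q(\gamma)}\gamma^2\nu^2\right)+\gamma^2\nu^2,
\]
and a direct algebraic simplification of the coefficient of $\gamma^2\nu^2$, namely
\[
q(\gamma)\,\tfrac{1-q(\gamma)^k}{1-q(\gamma)}+1 \;=\; \tfrac{1-q(\gamma)^{k+1}}{1-q(\gamma)},
\]
closes the induction. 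There is no real obstacle; the only subtlety worth flagging is ensuring $q(\gamma)\ne 1$ so that division by $1-q(\gamma)$ is legitimate, which is exactly why the proposition restricts $\gamma\in(0,2/L)$.
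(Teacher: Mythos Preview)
Your proposal is correct and follows exactly the paper's approach: the paper's proof is a single sentence invoking the inequality in Lemma~\ref{lemma:rel_bound}(b), and you carry out precisely that plan with the unrolling made explicit. One minor formal point worth noting is that Lemma~\ref{lemma:rel_bound} as stated also lists Assumption~\ref{assum:step_error}(a) among its hypotheses, which fails for a constant stepsize; however, the one-step inequality you actually use from part~(b) is derived in the lemma's proof from Lemma~\ref{lemma:Lipschitz_key}, strong convexity, and Assumption~\ref{assum:bounded_error} alone, so this does not affect the argument.
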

\begin{proof}
Follows from the {inequality in part (b)} of Lemma \ref{lemma:rel_bound}. 
\end{proof}

From inequality \eqref{constant-rate}, we obtain {the following relation}
\begin{equation}\label{equ:reform2}
\EXP{\|x_k-x^*\|^2} \le
\underbrace{q(\g)^k\EXP{\|x_0-x^*\|^2}}_{\rm Transient \,error }+
\underbrace{\frac{\g^2\nu^2}{1-q(\g)}}_{\rm Persistent \, error}
\quad \hbox{for all }k \ge 1,
\end{equation}
where the {expected distance $\EXP{\|x_k-x^*\|^2}$ }
is bounded by the sum of two error terms:
\begin{enumerate}
\item[(1)] {\em Transient error:} The transient error, given by $q(\g)^k
\EXP{\|x_0-x^*\|^2}$, decays to zero as $k \to \infty.$ In effect, the
contractive nature of {this error}, as arising from $q(\g) < 1$, ensures
that the transient error can be reduced to an arbitrarily small level. 
\item[(2)]{\em Persistent error:} The persistent error, given by
${\frac{\g^2\nu^2}{1-q(\g)}}$, is invariant to increasing the number of
iterations, denoted by $k$.  Its reduction, as we proceed to show,
necessitates reducing $\gamma.$
\end{enumerate}

Our cascading steplength scheme basically requires specifying a rule
for deciding at what iteration  to decrease the steplength and to what
extent it should be decreased. The iterations during which the stepsize
is kept fixed is referred to as a {\it constant steplength regime}} or
just a {\em regime}.  Given the two error terms, our scheme can be
loosely represented as an infinite {sequence} of regimes of finite
duration.  In fact, we proceed to show that the duration of the regimes
is an increasing function. Entering a new regime is marked by a
reduction in {the} steplength. In fact, since a finite reduction in the
steplength occurs between consecutive regimes, the steplength sequence
would naturally converge to zero if there {is} an infinite number of the
regimes.  Suppose one is at the beginning of the $t$th regime, where the
steplength  is $\g_t$ and the current iteration number is $K$. The
steplength $\gamma_t$ is maintained constant during regime $t$.
Furthermore, suppose that at the beginning of the $t$th regime, the
transient error is greater than the persistent error for $\gamma_t$,
i.e., $\EXP{\|x_K-x^*\|^2} > \frac{\g_t^2\nu^2}{1-q(\g_t)}$. 
Since $0<q(\g_t)<1$, 
$\EXP{\|x_K-x^*\|^2}$ decreases when multiplied with $q(\g_t)^k$ for
$k \geq 0$. The larger $k$, the smaller $q(\g_t)^k\EXP{\|x_K-x^*\|^2}$,
so there exists $k>0$ for which $q(\g_t)^k\EXP{\|x_K-x^*\|^2}$
will drop and remain below the persistent error 
$\frac{\g_t^2\nu^2}{1-q(\g_t)}$. 
We let $K_t$ be the index $k$ just before this drop takes place,
i.e., $K_t$ is the largest $k$ for which the following
inequality holds:  
$$ q(\g_t)^k\EXP{\|x_K-x^*\|^2} > \frac{\g_t^2\nu^2}{1-q(\g_t)}.$$
Therefore, $K_t$ specifies the duration of regime $t$,
during which the stepsize is fixed at $\g_t$.

The next question is how one should go about reducing the persistent
error. We observe through the next result that by reducing $\gamma_t$, the
persistent error does indeed reduce.
\begin{lemma}\label{lemma:decrease} 
Consider the persistent error given by $P(\g)=\frac{ \gamma^2
	\nu^2}{1-q(\gamma)}$, {where $q(\g)=1-\eta \g (2-\g L)$ and
		$\g\in(0,\frac{2}{L})$.} Then, this error is an increasing
		function of $\gamma.$
\end{lemma}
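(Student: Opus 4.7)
The plan is to simplify $P(\gamma)$ by substituting the explicit form of $q(\gamma)$, and then verify monotonicity either by inspection or by differentiation.

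First I would compute $1 - q(\gamma) = \eta \gamma (2 - \gamma L)$ directly from the definition $q(\gamma) = 1 - \eta \gamma (2 - \gamma L)$. Since $\gamma \in (0, 2/L)$, both factors $\gamma$ and $2 - \gamma L$ are strictly positive, so the denominator of $P(\gamma)$ is well-defined and positive. Cancelling one factor of $\gamma$ from numerator and denominator gives
\[
P(\gamma) \;=\; \frac{\gamma^2 \nu^2}{\eta \gamma (2 - \gamma L)} \;=\; \frac{\nu^2}{\eta} \cdot \frac{\gamma}{2 - \gamma L}.
\]

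Next I would show that the function $\gamma \mapsto \gamma/(2 - \gamma L)$ is strictly increasing on $(0, 2/L)$. The cleanest route is differentiation: the quotient rule gives
\[
\frac{d}{d\gamma}\!\left(\frac{\gamma}{2 - \gamma L}\right) \;=\; \frac{(2 - \gamma L) + \gamma L}{(2 - \gamma L)^2} \;=\; \frac{2}{(2 - \gamma L)^2} \;>\; 0,
\]
so multiplying by the positive constant $\nu^2/\eta$ shows $P'(\gamma) > 0$ on $(0, 2/L)$, which proves monotonicity.

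There is no real obstacle here; the only subtlety is ensuring we work on the admissible interval so that the denominator stays positive and the cancellation of $\gamma$ is legitimate. An equally short alternative, which avoids calculus, is to rewrite $\gamma/(2-\gamma L) = (1/L)\bigl(2/(2-\gamma L) - 1\bigr)$ and observe that $2 - \gamma L$ is strictly decreasing in $\gamma$, so its reciprocal is strictly increasing; either presentation suffices.
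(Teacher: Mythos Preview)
Your proof is correct and follows essentially the same route as the paper: simplify $P(\gamma)$ to $\dfrac{\nu^2}{\eta}\cdot\dfrac{\gamma}{2-\gamma L}$ and differentiate to obtain $P'(\gamma)=\dfrac{\nu^2}{\eta}\cdot\dfrac{2}{(2-\gamma L)^2}>0$. Your additional care about the domain and the optional calculus-free variant are nice touches not present in the original.
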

\begin{proof}
{By using $q(\gamma) = 1-\eta\gamma(2-\gamma L)$, for the persistent
	error we obtain $P(\g)=\frac{ \gamma\nu^2}{\eta (2-\gamma L)}$.
		Therefore, the derivative of the persistent error with respect
		to $\g$ is given by 
                  $P'(\g) =\frac{\nu^2}{\eta}\, \frac{2}{(2-\g
				L)^2}> 0$ for all $\g\ne\frac{2}{L}$.}
\end{proof}

Therefore, when $\gamma_t$ is reduced to $\gamma_{t+1}$, the persistent
error does indeed reduce. This drop in steplength is referred to as 
the {\em cascading step} and marks the commencement  of  a new regime. 
As earlier, in this regime, the persistent error will be smaller than 
the transient error and the process of determining $K_{t+1}$ can be repeated.
Therefore, we may view the scheme as a diminishing steplength scheme
where the steplength is reduced at a sequence of time epochs and between
these epochs, it is maintained constant. 

We now proceed to describe the scheme more formally. It can be viewed as
having two stages, of which the second stage repeats infinitely
often in a consecutive fashion. The first of these is an initialization
phase. {We assume throughout that the constraint set $X$ is bounded,
so that $\EXP{\|x_0-x^*\|^2}\le D^2$ with $D=\max_{x,y\in X}\|x-y\|$.
Next, we describe each of the stages in cascading scheme in some} detail.

\noindent {\bf \underline{Cascading steplength stochastic approximation
	(CSA) scheme}:}

\noindent {\em Initialization phase (Phase I):} A requirement to begin
making gradient steps, is that the persistent error has to be smaller
than $D^2$. If this were not the case, then $\gamma$ would have to be
reduced {until} the persistent error is smaller than $D^2$. {
More specifically, given a parameter
$\theta\in(0,1)$, we determine the integer $\ell$ such that}
\begin{align}
\ell \triangleq \min_{j} \left\{ D^2> \frac{\g^2
	\theta^{2j}\nu^2}{1-q(\gamma \theta^j)} 
	\right \}, 
\end{align}
where $q(\gamma)=1-\eta(2-L\gamma)$ and $0<\gamma<\frac{2}{L}.$ 
We define $\gamma_0$ as $\g_0 \triangleq  
\gamma \theta^\ell$, $q_0=q(\g_0)$, and 
\begin{equation}\label{epoch:K0}
K_0=\max_{k}\left\{ k \in \mathbb{Z}_+: q_0^k D^2 >
 \frac{\g_0^2\nu^2}{1-q_0} \right\}.\end{equation}
Finally, we exit this phase by defining $\Kbar_{-1}=0$, 
setting $t=0$, and going to Phase II$_t$.

 \begin{figure}[htb]
\includegraphics[width=2.16in]{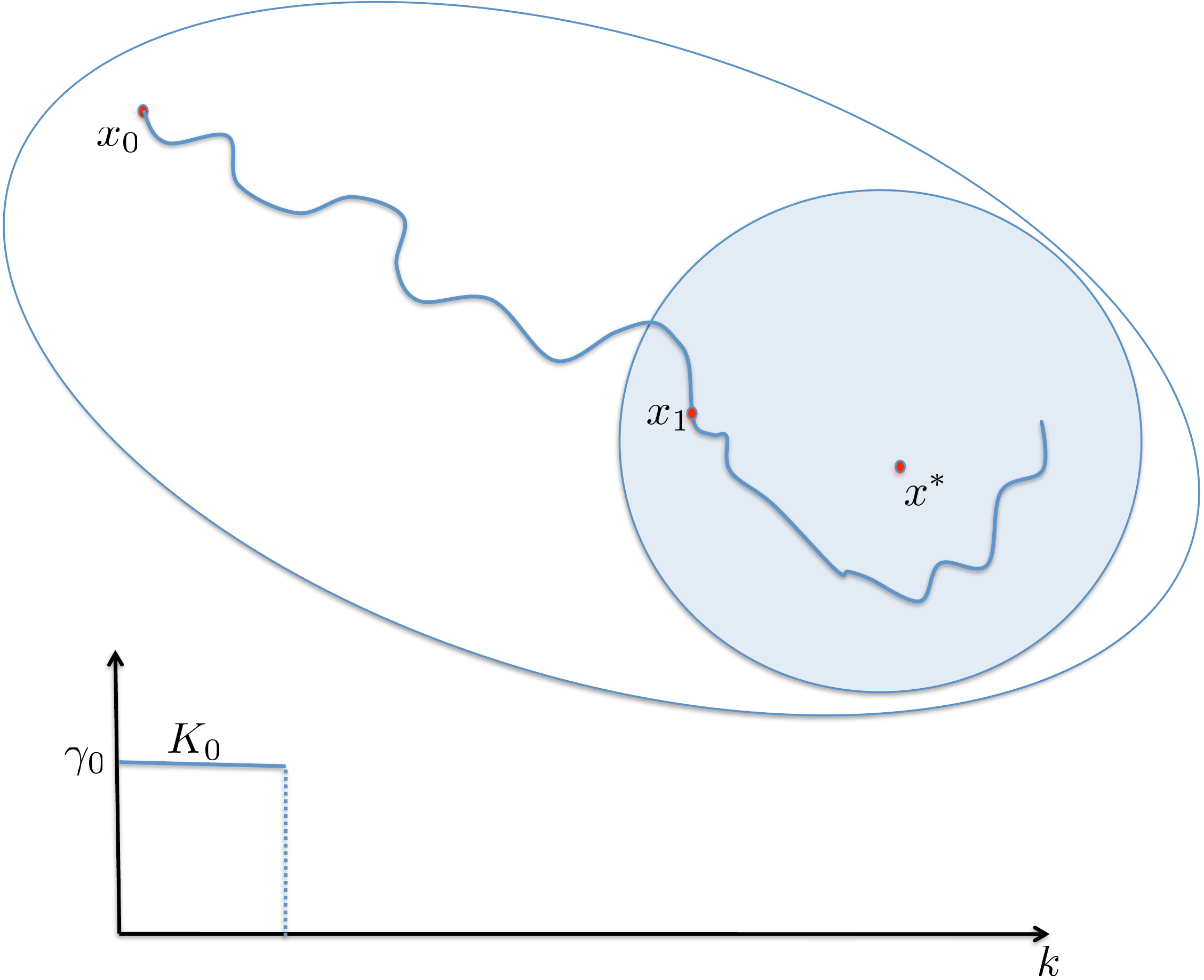}
\includegraphics[width=2.16in]{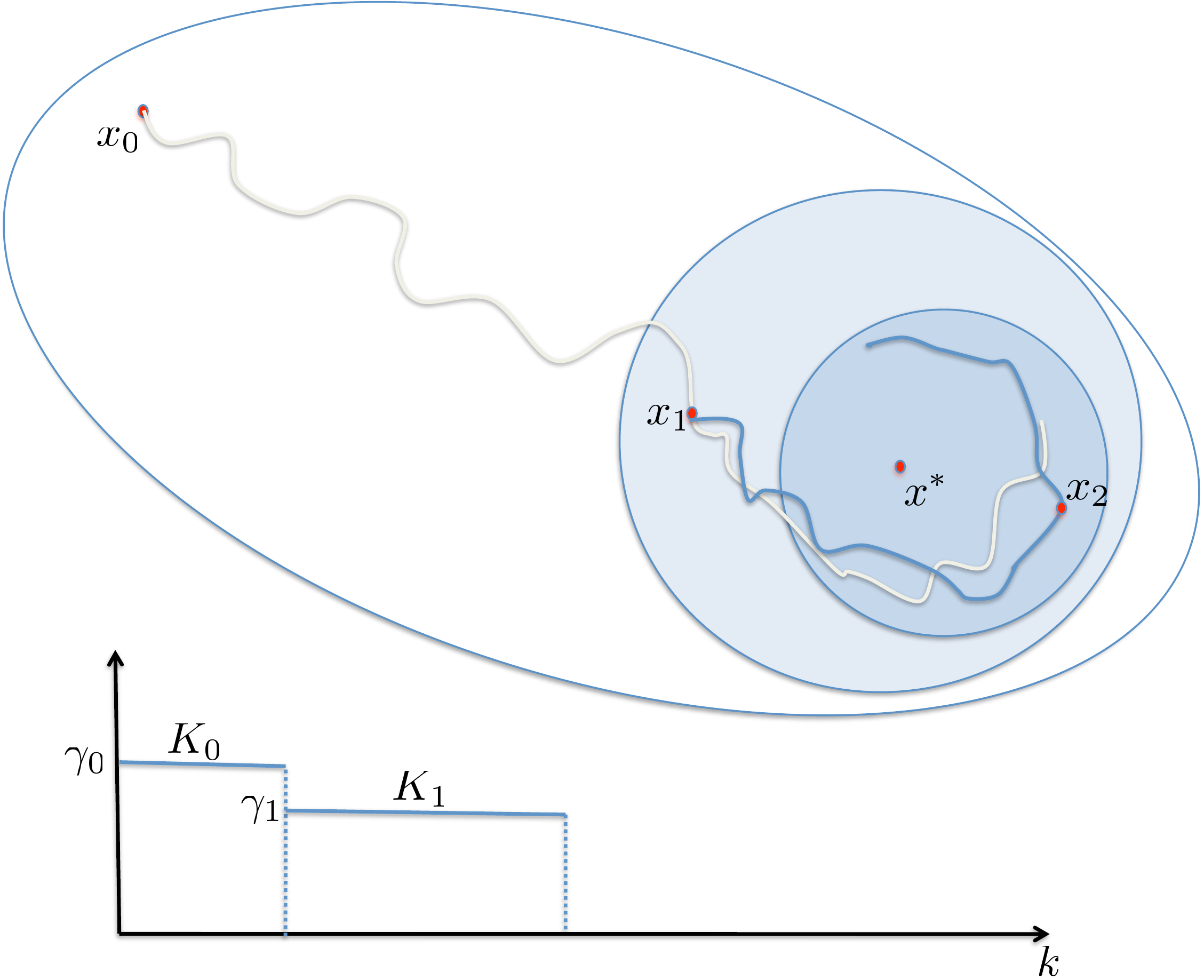}
\includegraphics[width=2.16in]{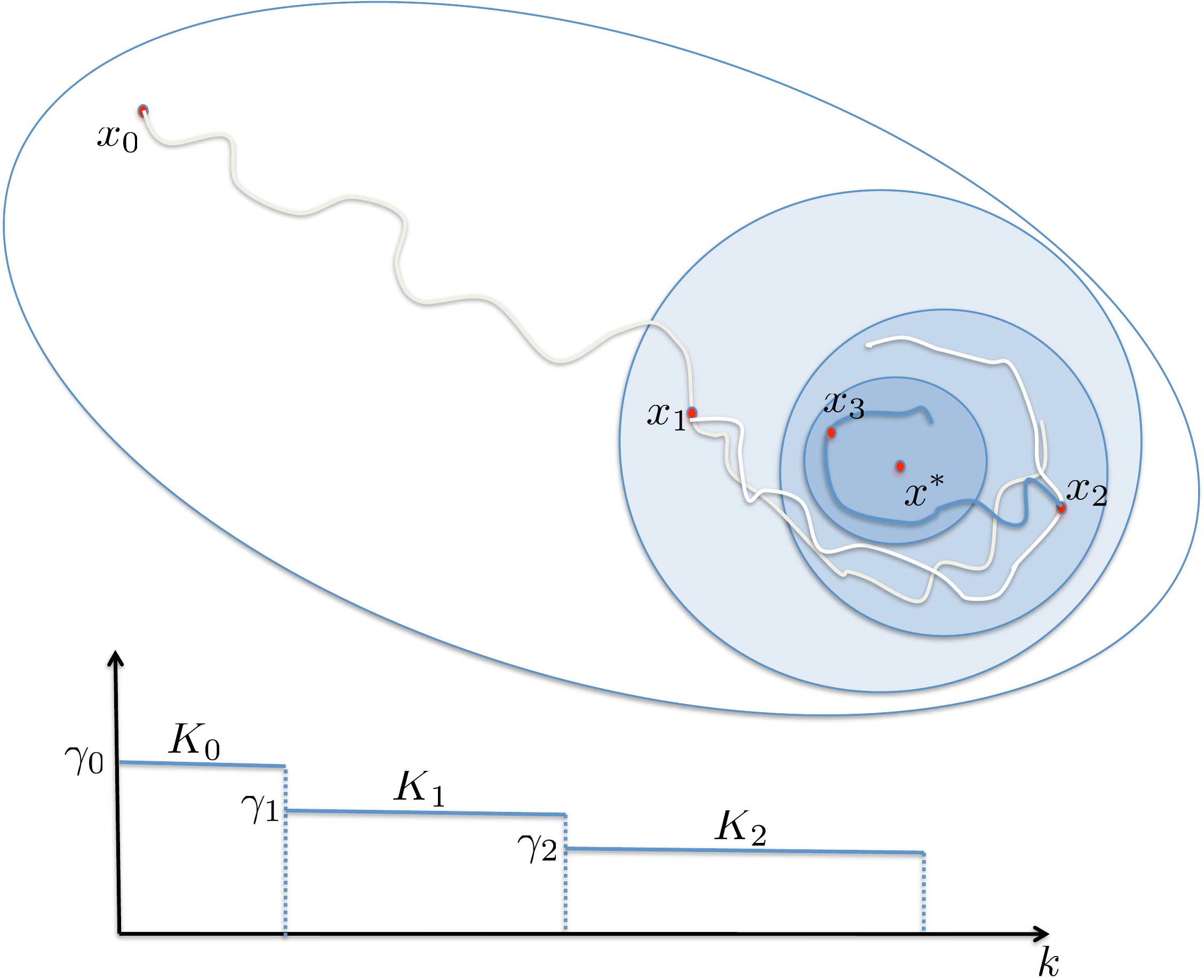}
 \caption{The cascading scheme with phases II$_0$ (left), II$_1$ (center) and
	 II$_2$ (right).}
 \label{fig:casc_algo} 
 \end{figure}
 
\noindent {\em Constant steplength phase (Phase II$_t$):}
Define $ \Kbar_t = \sum_{j=0}^t K_t$.  For the iteration indices $k$
with $k \in \{\Kbar_{t-1}+1, \hdots, \Kbar_t\}$, the stepsize is kept
constant and equal to $\g_t$, i.e.,
\[\g_k=\g_t\qquad\hbox{for all }k=\Kbar_{t-1}+1, \hdots, \Kbar_t.\] 
Then, we  increase $t$ by setting $t=t+1$, 
reduce  the stepsize by letting  $\g_t \triangleq \gamma_{t-1} \theta$, compute
$q_t=q(\g_t)$ and determine the integer $K_t$ as follows:
\begin{align} 
K_t & \triangleq \max_k \left\{ k \in \mathbb{Z}_+:
q_{t}^k2^{t}\left(\prod_{j=0}^{t-1}q_j^{K_j}\right) D^2 > 
\frac{\g_t^2\nu^2}{1-q_{t}}\right\}. 
\label{epoch:Kt} 
\end{align}
We then repeat phase II$_{t}$ until the number $k$ of iterations (i.e.,
gradient steps) exceeds a pre-specified threshold, in case of which the
algorithm terminates.  

We provide a graphical representation of these phases in 
Figure~\ref{fig:casc_algo} where the circles around $x^*$ represent thresholds 
beyond which the transient error is less than the persistent error. 
For instance, in Figure~\ref{fig:casc_algo} (plot to the left), 
phase II$_0$ requires $K_0$ steps to 
reach the first circle. Once, the steplength is reduced  by a factor $\theta$, 
the phase II$_1$ commences and requires $K_1$
steps to reach an analogous error threshold where the transient error is
equal to the persistent error; this is illustrated in 
Figure~\ref{fig:casc_algo} (plot in the center). Finally, phase II$_2$ requires
$K_2$ to reach an even smaller level of persistent error, as depicted in 
Figure~\ref{fig:casc_algo} (plot to the right). Note that whenever 
the steplength is reduced, the persistent error is immediately reduced 
(Lemma~\ref{lemma:decrease}). Thus, the stepsize is essentially a piecewise 
constant decreasing function of the iteration index $k$. 

The next result establishes the correctness of the cascading scheme by
showing that $K_t$ in Phase II$_t$ is finite, so the scheme is well
defined.
\begin{proposition} 
Let Assumptions~\ref{assum:convex} and~\ref{assum:bounded_error} hold. Also,
let $f$ be differentiable over the set $X$ with Lipschitz gradients
with constant $L>0$ and strongly convex with constant $\eta >0$. 
Assume that the set $X$ is compact and let 
$D=\max_{x,y\in X}\|x-y\|$. Then, $K_t$ is finite for all $t\ge0$.
\end{proposition}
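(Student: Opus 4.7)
The plan is to argue by induction on $t$, with the key observation being that the contraction factor $q_t$ lies strictly in $[0,1)$ for every $t$, so that the left-hand side of the inequality defining $K_t$ decays geometrically in $k$ while the right-hand side is a fixed positive constant. Finiteness of $K_t$ is then immediate.

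First I would show that the stepsizes stay in the admissible interval $(0,2/L)$. By construction of Phase~I we have $\g_0=\gamma\theta^\ell\in(0,2/L)$ since $\gamma\in(0,2/L)$ and $\theta\in(0,1)$, and the update $\g_{t+1}=\theta\g_t$ gives $\g_t=\g_0\theta^t\in(0,2/L)$ for every $t\ge0$. Consequently $\eta\g_t(2-\g_t L)$ is nonnegative, and using $\eta\le L$ together with the fact that the maximum of the quadratic $\g\mapsto \g(2-\g L)$ on $(0,2/L)$ equals $1/L$, one gets $\eta\g_t(2-\g_t L)\le \eta/L\le1$. This gives $q_t=1-\eta\g_t(2-\g_t L)\in[0,1)$ for every $t$.

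Next I would handle the base case $t=0$: the defining condition $q_0^kD^2>\g_0^2\nu^2/(1-q_0)$ pits a geometrically decaying sequence against a positive constant, so it fails for all $k$ large enough and therefore $K_0$ (defined as the maximum such $k$) is a nonnegative integer. For the inductive step, I would assume $K_0,\ldots,K_{t-1}$ are finite, so that
\[
C_t\;\triangleq\;2^{t}\left(\prod_{j=0}^{t-1}q_j^{K_j}\right)D^2
\]
is a fixed, finite, nonnegative constant. The inequality in~\eqref{epoch:Kt} then reads $q_t^kC_t>\g_t^2\nu^2/(1-q_t)$. Since $q_t\in[0,1)$ strictly and the right-hand side is a strictly positive constant, the left-hand side $q_t^kC_t$ tends to $0$ as $k\to\infty$, so the inequality fails for all sufficiently large $k$. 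Hence $K_t$ is well defined and finite.

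The only remaining subtlety is the degenerate case $q_t=0$, which can occur only when $\eta=L$ and $\g_t=1/L$; in that case $q_t^kC_t=0$ for every $k\ge1$, which is already below the strictly positive threshold, so $K_t=0$. No genuine obstacle is expected; the argument is essentially a bookkeeping exercise once one notes that $q_t$ is bounded away from $1$ and that every factor appearing in $C_t$ is carried over from previous (finite) phases.
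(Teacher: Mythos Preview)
Your argument is correct and takes a more direct route than the paper. The paper couples the finiteness of $K_t$ with an auxiliary inductive invariant, namely the error bound
\[
\EXP{\|x_{\bar K_t}-x^*\|^2} < 2^{t+1}\Bigl(\prod_{j=0}^{t} q_j^{K_j}\Bigr)D^2,
\]
and carries both claims through the induction simultaneously, invoking Proposition~\ref{prop:casc_inequ} at each phase to control $\EXP{\|x_{\bar K_t}-x^*\|^2}$. You bypass this entirely by arguing straight from the definition: once $q_t\in[0,1)$ is established, the left side of~\eqref{epoch:Kt} decays geometrically to zero against a fixed positive threshold, and finiteness is immediate. This is cleaner for the stated proposition; the paper's version has the advantage of producing the error bound as a byproduct, which is what gives the cascading scheme its quantitative content.

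One small point you should make explicit: for $K_t$ to be well defined as a \emph{maximum}, the set in~\eqref{epoch:Kt} must be nonempty, i.e., you need $C_t>\g_t^2\nu^2/(1-q_t)$ at $k=0$. For $t=0$ this is exactly what Phase~I guarantees. For $t\ge1$ it follows from the inductive hypothesis: since $K_{t-1}$ itself satisfies its defining inequality, $C_t=2\,q_{t-1}^{K_{t-1}}C_{t-1}>2\,\g_{t-1}^2\nu^2/(1-q_{t-1})$, and by Lemma~\ref{lemma:decrease} the persistent error is increasing in $\g$, so this exceeds $\g_t^2\nu^2/(1-q_t)$. Adding this one line closes the argument.
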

\begin{proof}
We use induction on $t$ to show that $K_t$ is well defined and 
for all $t\ge0$,
\begin{equation}\label{eqn:jedan}
\EXP{\|x_{\bar K_t}-x^*\|^2} < 2^{t+1}
\left(\prod_{j=0}^{t} q_j^{K_j}\right)D^2.
\end{equation}
First note that, since $\g_0\in(0,\frac{2}{L})$ and the steplength $\g_k$
is non-increasing in $k$, we have $q(\gamma_t)\in(0,1)$ for all $t\ge0$.

For $t=0$, from Proposition~\ref{prop:casc_inequ}
 and the boundedness of the set $X$ we have 
\begin{equation}\label{eqn:constant-rateo}
\EXP{\|x_k-x^*\|^2} \le q_0^k D^2+\frac{\g_0^2\nu^2}{1-q_0}
\qquad\hbox{for all $k\ge0$},
\end{equation}
where $q_0 = q(\g_0)=1-\eta \g_0 (2-\g_0 L)$ and $\g_0$ is as given in 
the initialization phase of the cascading scheme. Since $\g_0$ is selected in 
the initialization phase so that $D^2>\frac{\g_0^2\nu^2}{1-q_0}$ and 
$q_0^k D^2$ is decreasing as $k$ increases, there exists
an integer $\tilde K\ge1$ such that
$q_0^{\tilde K} D^2 \le \frac{\g_0^2\nu^2}{1-q_0}$. Note that 
$K_0=\tilde K-1$, thus $K_0$ is well defined. Furthermore,
since $q_0^k D^2 > \frac{\g_0^2\nu^2}{1-q_0}$ for $k=0,\ldots K_0$,
from~\eqref{eqn:constant-rateo} we have
\[\EXP{\|x_{\bar K_0}-x^*\|^2} < 2\g_0^{K_0} D^2,\]
where we use the fact $\bar K_0=K_0$ (see Phase II$_t$ for $t=0$).
\begin{figure}[htb]
 \centering
 \subfloat[Transient vs.
 Persistent]{\label{fig:casc_t_p}
\includegraphics[width=2.4in,height=2.4in]{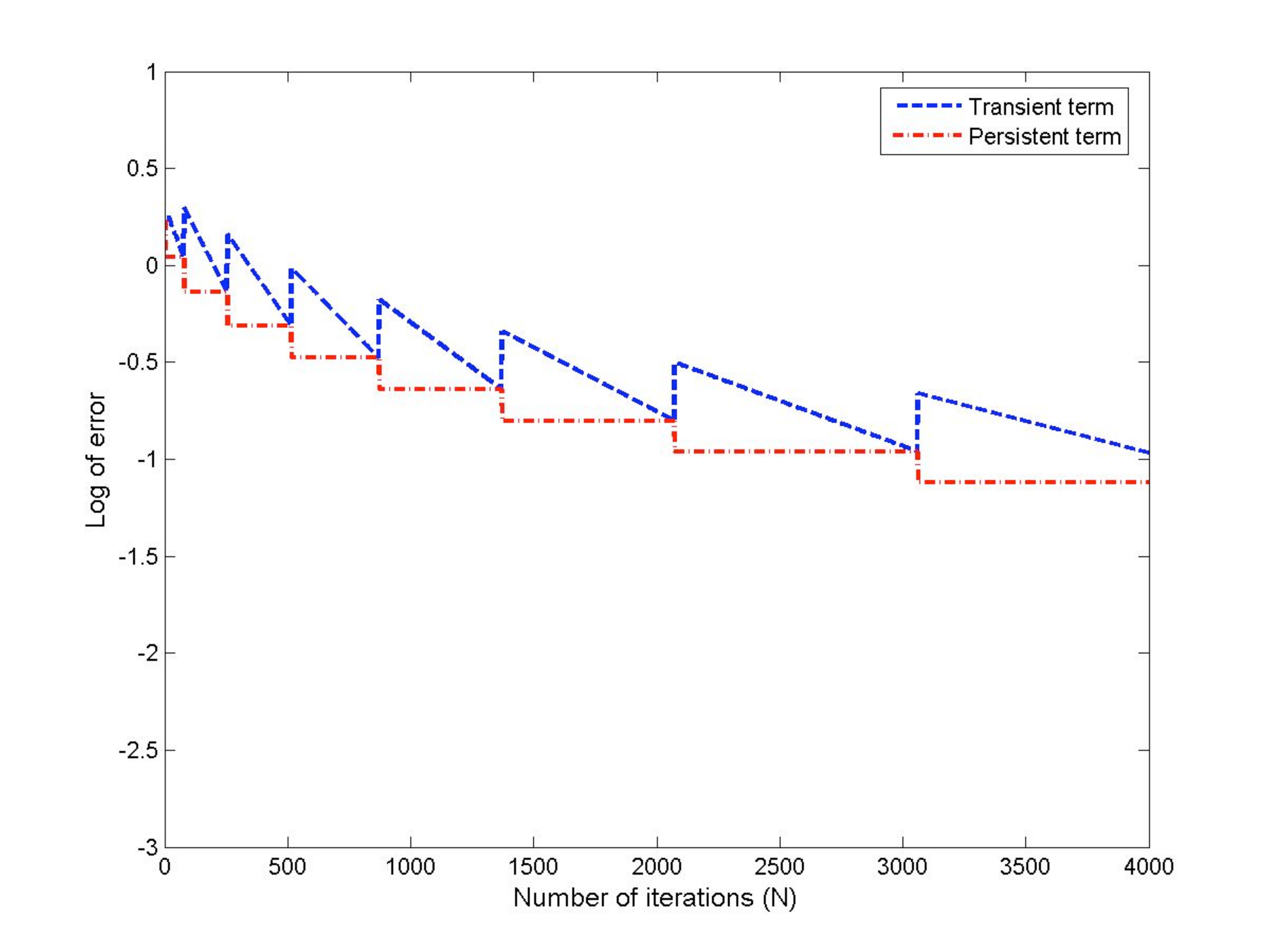}}
 \subfloat[Total]{\label{fig:casc_tot}
\includegraphics[width=2.4in,height=2.4in]{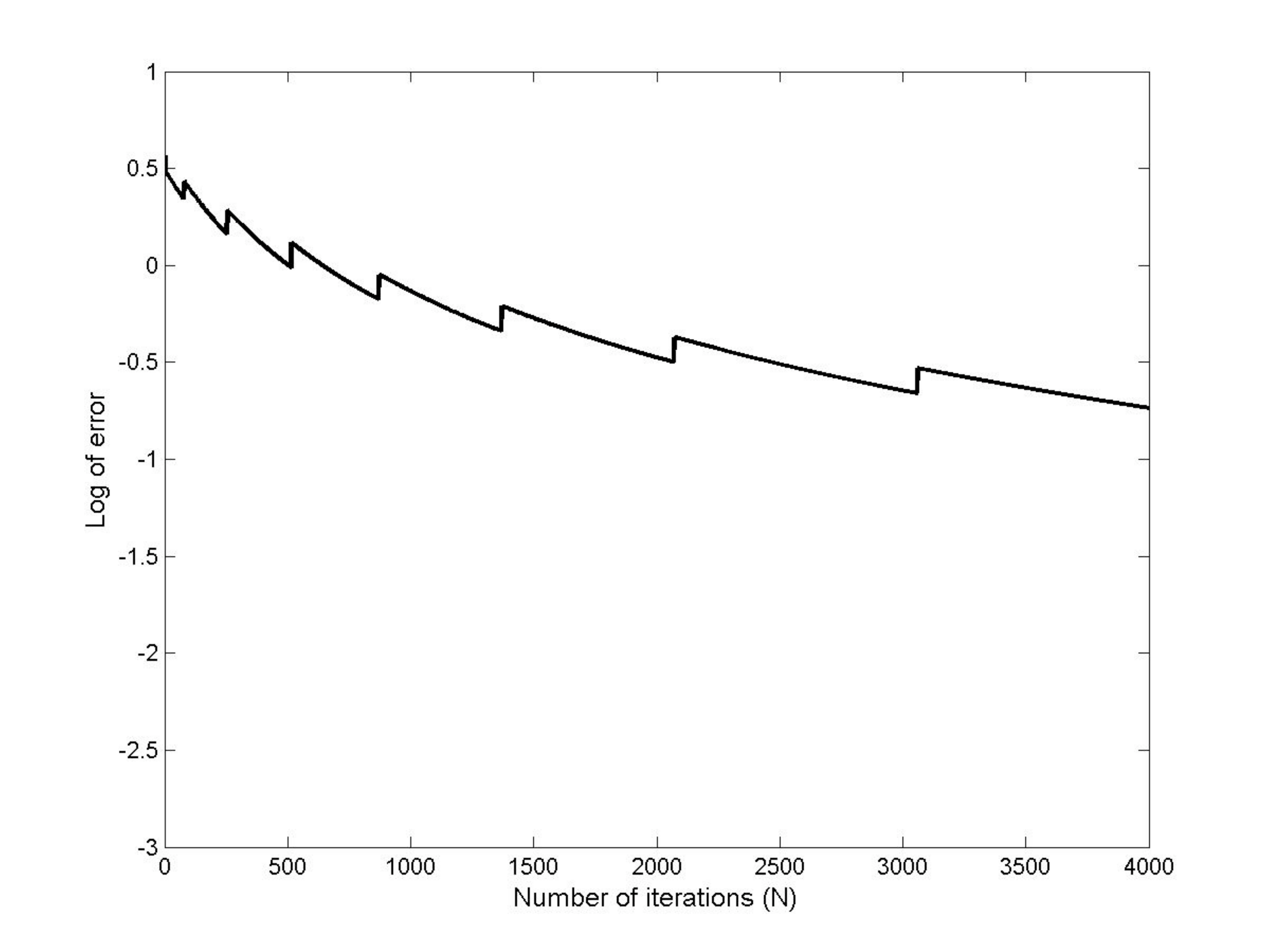}}
  \caption{Elements of cascading scheme for the stochastic utility problem.}
 \label{fig:casc_utility}
 \end{figure}

Now assume that $K_t$ is well defined and relation~\eqref{eqn:jedan} 
holds for $t$. We next show that $K_{t+1}$ is also well defined and 
relation~\eqref{eqn:jedan} holds for $t+1$. Note that the steplength
$\g_k=\g_{t+1}$ is used for $k\ge \bar K_t$. 
From Proposition~\ref{prop:casc_inequ}
where we replace
$x_0$ with $x_{\bar K_t}$, by replacing $\gamma$ by $\gamma_{t+1}$
letting $q_{t+1}=q(\gamma_{t+1})$, 
we have for $k\ge \bar K_t$,
\[\EXP{\|x_k-x^*\|^2} \le q_{t+1}^k \EXP{\|x_{\bar K_t}-x^*\|^2}+
\frac{\g_{t+1}^2\nu^2}{1-q_{t+1}}.\]
By inductive hypothesis relation~\eqref{eqn:jedan} holds, so it follows
\begin{equation}\label{eqn:last}
\EXP{\|x_k-x^*\|^2} <\underbrace{q_{t+1}^k 2^{t+1} 
\left(\prod_{j=0}^{t} q_j^{K_j}\right)}_{\bf Term\,1}
+\underbrace{\frac{\g_{t+1}^2\nu^2}{1-q_{t+1}}}_{\bf Term\,2}
\qquad\hbox{for all $k\ge \bar K_t$}.\end{equation}
Consequently, $K_{t+1}$ is defined  as the largest positive integer $k$ for
	which term 1 is strictly greater than term 2, i.e.,
\[K_{t+1} \triangleq\max_k \left\{ k \in \mathbb{Z}_+:
q_{t+1}^k2^{t+1}\left(\prod_{j=0}^{t}q_j^{K_j}\right) D^2 > 
\frac{\g_{t+1}^2\nu^2}{1-q_{t+1}}\right\}\]
(see the definition of $K_t$ in~\eqref{epoch:Kt}).
Noting that $\bar K_{t+1}= \bar K_t + K_{t+1}$ (see Phase II$_t$)
and $q_{t+1}^k2^{t+1}\left(\prod_{j=0}^{t}q_j^{K_j}\right) D^2 > 
\frac{\g_{t+1}^2\nu^2}{1-q_{t+1}}$
for $k=\bar K_t+1,\ldots, \bar K_{t+1}$,
from~\eqref{eqn:last} with $k=\bar K_{t+1}$, we obtain
\[\EXP{\|x_{\bar K_{t+1}}-x^*\|^2} \le 
2 q_{t+1}^{K_{t+1}}\, 2^{t+1} 
\left(\prod_{j=0}^{t} q_j^{K_j}\right) D^2
=2^{t+2} \left(\prod_{j=0}^{t+1} q_j^{K_j}\right) D^2,\]
thus showing relation~\eqref{eqn:jedan} for $t+1$ and
completing the proof.
\end{proof}

The transient and persistent error trajectories are illustrated in 
in Figure \ref{fig:casc_utility} for a  problem discussed later
in Section \ref{sec:utility_problem}. In Figure \ref{fig:casc_t_p}, the
transient and persistent terms of the error are plotted. The persistent
error, as expected, is a piecewise constant decreasing function of the
iteration count with the {\em jumps}  occurring whenever the
steplengths are reduced. The transient error is a plot of $q_{t}^k 2^{t}
\prod_{j=0}^{t-1} q_j^{K_j} D^2$ with respect to $k$. This function is a
decreasing function when $k \in \{\Kbar_{t-1},\ldots, \Kbar_t-1\}.$ As
soon as $k =\Kbar_t$, {in the transient error the factor $2^{t}$ is
replaced with $2^{t+1}$}, leading to the increase in transient error at that
juncture. The total error, which is the summation of two terms, is
showed in Figure \ref{fig:casc_tot}.  

{\bf Remark on choice of $\theta$:}  Recall that $\theta$ specifies the rate 
at which the
	steplength is dropped over consecutive steps in the cascading
		scheme. It can be readily observed from the bounds derived on
		$K_t$ that if $\theta \to 1$, then $K_t \to 0$ 
thus implying that the steplength is kept constant for a very short period. 
This is intuitive since a conservative drop in steplengths would imply
			 that these drops have to occur more frequently to 
ensure that the sequence is driven to zero. Conversely, if $\theta
			 \to 0$, then $K_t$ can grow to be quite large.

\subsection{Global convergence theory}\label{casc:conv}
In this section, we prove that algorithm (\ref{eqn:algorithm}) using the
cascading steplength scheme is indeed convergent  to the optimal
solution of problem (\ref{eqn:problem}).
\begin{lemma}\label{conv-ln}
Let $q(\g) = 1-2\eta \g + \eta L \g^2$ and let $\eta < L$. 
Then,  we have 
\begin{align*}
0 & < \frac{-\ln(q(\g))}{\g}  \qquad
\hbox{for $\g\in \left(0,\frac{2}{L}\right)$},\\
\frac{-\ln(q(\g))}{\g} & \leq \frac{2\eta L}{L-\eta}\quad 
\hbox{for $\g\in\left(0,\frac{2}{L}\right)$
}.
\end{align*}
Furthermore  
	$$ \lim_{\g \to 0}\, \frac{-\ln(q(\g))}{\g} =2\eta.$$
\end{lemma}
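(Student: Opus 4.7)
The plan is to prove the three assertions by first analyzing the quadratic $q(\gamma)=1-\eta\gamma(2-L\gamma)$ on the interval $(0,2/L)$, then using a standard concave/convex-logarithm inequality for the main bound, and finally reading off the limit from a Taylor expansion.

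First I would establish that $q(\gamma)\in(0,1)$ for every $\gamma\in(0,2/L)$. The factor $\gamma(2-L\gamma)$ is strictly positive on this interval, which immediately gives $q(\gamma)<1$. For positivity, note that $\gamma\mapsto\gamma(2-L\gamma)$ attains its maximum at $\gamma=1/L$ with value $1/L$, so $\eta\gamma(2-L\gamma)\le \eta/L<1$ since $\eta<L$; hence $q(\gamma)\ge 1-\eta/L>0$. This yields $\ln(q(\gamma))<0$ and therefore $-\ln(q(\gamma))/\gamma>0$, which is the first claim.

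For the upper bound, I would use the elementary inequality
\[
-\ln(1-x)\le \frac{x}{1-x}\qquad\text{for }x\in[0,1),
\]
which follows from the series $-\ln(1-x)=\sum_{k\ge 1} x^k/k\le x\sum_{k\ge 0}x^k=x/(1-x)$. Applying it with $x=\eta\gamma(2-L\gamma)\in[0,\eta/L)$ gives
\[
\frac{-\ln(q(\gamma))}{\gamma}\le \frac{\eta(2-L\gamma)}{1-\eta\gamma(2-L\gamma)}.
\]
Now I bound the numerator by $2\eta$ (using $\gamma>0$) and, thanks to Step~1, the denominator from below by $1-\eta/L=(L-\eta)/L$. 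Combining these yields the desired bound $\tfrac{2\eta L}{L-\eta}$. The main obstacle here is just choosing the right one-sided log inequality; the rest is bookkeeping on the interval.

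Finally, for the limit I would use the Taylor expansion $-\ln(1+h)=-h+O(h^2)$ as $h\to 0$ together with $q(\gamma)-1=-2\eta\gamma+\eta L\gamma^2$. Writing
\[
\frac{-\ln(q(\gamma))}{\gamma}=\frac{-\ln\bigl(1+(q(\gamma)-1)\bigr)}{q(\gamma)-1}\cdot\frac{q(\gamma)-1}{\gamma},
\]
the first factor tends to $-1$ as $\gamma\to 0$ (since $q(\gamma)-1\to 0$), while the second factor is $-2\eta+\eta L\gamma\to -2\eta$. The product gives $2\eta$, proving the last claim. This splitting keeps the argument clean and avoids invoking L'Hôpital explicitly, though L'Hôpital would work equally well.
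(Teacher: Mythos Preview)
Your proof is correct and essentially identical to the paper's: both establish $q(\gamma)\in(0,1)$ on the interval, use the series-based inequality $-\ln(1-x)\le x/(1-x)$ together with the lower bound $q(\gamma)\ge 1-\eta/L$ to obtain the upper estimate, and the paper computes the limit via L'H\^opital while you use an equivalent Taylor factorization (which you yourself note). There are no gaps.
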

\begin{proof}
Let $r(\g)=\frac{-\ln(q(\g))}{\g}$. Note that
the function $q(\g) = 1-2\eta \g + \eta L \g^2$ is nonnegative for all $\g$
since $0<\eta\le L$. Furthermore $q(\g)< 1$ for $\g<\frac{2}{L}$.
Thus, $r(\g)>0$ for $0<\g<\frac{2}{L}$.
We next show that $r(\g)$ is bounded from above as stated.
To show that the sequence is bounded, we 
employ the Taylor expansion of $\ln(q(\g))$. First, we write
\[-\ln (q(\g)) =-\ln (1-\beta(\g))\qquad\hbox{with }
\beta(\g) =2\eta\g - \eta L \g^2. \]
Noting that $\beta(\g)=1-q(\g)\in(0,1)$, we then use 
the fact $\ln(1-x)=-\sum_{k=1}^\infty \frac{x^k}{k}$ for $|x|<1$, and obtain
\[-\ln (q(\g)) = \sum_{k=1}^\infty \frac{\beta^k(\g)}{k}
\le \sum_{k=1}^\infty \beta^{k}(\g)=\frac{\beta(\g)}{1-\beta(\g)}
=\frac{\beta(\g)}{q(\g)}.\]
Using $\beta(\g)\le 2\eta\g$, we further obtain
\[\frac{-\ln (q(\g))}{\g}\le \frac{2\eta}{q(\g)}.\]
The function $q(\g)$ is convex over $\Real$ and it attains its minimum at
$\g^*=\frac{1}{L}$ with the minimum value $q^*=1-\frac{\eta}{L}$.
The minimum value satisfies $q^*>0$ when $L>\eta$.
Thus, when $\eta<L$, we have $q(\g)\ge 1-\frac{\eta}{L}$,
implying that
\[\frac{-\ln (q(\g))}{\g}\le \frac{2\eta L}{L-\eta}.\]
The relation for the limit is obtained by applying
L'H\^{o}pital's rule, as follows:
\begin{align*}
 \lim_{\g \to 0}  \, \frac{-\ln(1-2\eta \g + \eta L \g^2)}{\g}
=  \lim_{\g \to 0}\, \frac{2\eta - 2\eta L \g}{1-2\eta \g + \eta L \g^2} 
= 2\eta.
\end{align*}
\end{proof}

\begin{proposition}[Cascading steplength stochastic approximation (CSA) scheme]
\label{prop:cascadingscheme} 
Let Assumptions~\ref{assum:convex} and~\ref{assum:bounded_error} hold. Also,
let $f$ be differentiable over the set $X$ with Lipschitz gradients with 
constant $L>0$ and strongly convex with constant $\eta >0$, where $L>\eta$. 
Assume that the set $X$ is compact and let $D=\max_{x,y\in X}\|x-y\|$. 
Let the sequence $\{x_k\}$ be generated by algorithm~(\ref{eqn:algorithm}) and 
cascading steplength scheme with $\g_0\in\left(0,\frac{2}{L}\right)$ and 
$\theta\in(0,1)$. Then, $\{x_k\}$ converges almost
surely to the unique optimal solution of problem~(\ref{eqn:problem}).
\end{proposition}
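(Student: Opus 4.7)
The plan is to reduce the claim to Proposition~\ref{prop:lipschitzgrad}, which asserts almost-sure convergence of the iterates of algorithm~\eqref{eqn:algorithm} to some random point of $X^*$ whenever Assumption~\ref{assum:step_error} is satisfied. Under strong convexity, $X^* = \{x^*\}$ is a singleton, so such convergence is exactly what the proposition requires. Since Assumption~\ref{assum:bounded_error} is in force, the conditional second-moment condition in Assumption~\ref{assum:step_error}(b) reduces to $\sum_k \g_k^2 < \infty$, so the entire task becomes verifying
\[
 \sum_{t=0}^\infty K_t \g_t = \infty \qquad \text{and} \qquad \sum_{t=0}^\infty K_t \g_t^2 < \infty,
\]
for the cascading sequence $\g_t = \g_0 \theta^t$ with regime lengths $K_t$.

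I would prove the stronger quantitative statement that $K_t \g_t$ is of constant order in $t$. Setting $A_t := 2^t \prod_{j=0}^{t-1} q_j^{K_j} D^2$, the definition~\eqref{epoch:Kt} of $K_t$ translates immediately into the two-sided envelope
\[
 \frac{\g_t^2 \nu^2}{1-q_t} \;<\; q_t^{K_t} A_t \;\leq\; \frac{1}{q_t}\cdot \frac{\g_t^2 \nu^2}{1-q_t},
\]
so that $K_t$ equals $\log\bigl(A_t(1-q_t)/(\g_t^2 \nu^2)\bigr)/(-\log q_t)$ up to an $O(1)$ correction. By Lemma~\ref{conv-ln} the denominator $-\log q_t$ is of order $\g_t$ as $\g_t \to 0$, so it remains only to show that $A_t(1-q_t)/(\g_t^2\nu^2)$ stays bounded above and below by positive constants independent of $t$. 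To this end, I would combine the recursion $A_{t+1} = 2 q_t^{K_t} A_t$ with the envelope above to get $A_{t+1} = \Theta\bigl(\g_t^2/(1-q_t)\bigr)$, and then use $1-q_s = \eta \g_s (2 - \g_s L)$ together with $\g_{t+1}/\g_t = \theta$ to conclude the claim. This yields $K_t \g_t = \Theta(1)$, from which $\sum_t K_t \g_t = \infty$ is immediate and $\sum_t K_t \g_t^2 = \Theta\bigl(\sum_t \g_t\bigr) = \Theta\bigl(\g_0 \sum_t \theta^t\bigr) < \infty$ follows.

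The main obstacle is the inductive two-sided control of the compound quantity $A_t$. The geometric factor $2^t$ built into the bound of the preceding proposition may appear threatening, but in the cascading scheme the definition of $K_t$ is calibrated so that $q_t^{K_t}$ precisely cancels this doubling and pins $A_{t+1}$ to order $\g_t$. Securing this algebraic cancellation is essentially the whole content of the proof; once $K_t \g_t = \Theta(1)$ is in hand, divergence of $\sum_t K_t \g_t$ and geometric summability of $\sum_t K_t \g_t^2$ are both transparent, and Proposition~\ref{prop:lipschitzgrad} closes the argument by delivering almost-sure convergence of $\{x_k\}$ to the unique minimizer $x^*$.
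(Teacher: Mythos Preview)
Your reduction to Proposition~\ref{prop:lipschitzgrad} and the reformulation as $\sum_t K_t\g_t=\infty$, $\sum_t K_t\g_t^2<\infty$ is exactly how the paper sets things up. Where you diverge is in the actual verification: you aim for the single sharp estimate $K_t\g_t=\Theta(1)$ via a recursive two-sided control on $A_t=2^t\prod_{j<t}q_j^{K_j}D^2$, exploiting $A_{t+1}=2q_t^{K_t}A_t$ together with the envelope on $q_t^{K_t}A_t$ to pin $A_t(1-q_t)/(\g_t^2\nu^2)$ between two positive constants. That works, and both series properties are then immediate. The paper instead treats the two series by separate devices: for divergence it telescopes $\sum_j(K_j+1)(-\ln q_j)$ and uses the upper bound $-\ln q_j\le \frac{2\eta L}{L-\eta}\g_j$ from Lemma~\ref{conv-ln} to show $\sum_j(K_j+1)\theta^j$ dominates a divergent logarithm; for summability it bounds $K_t$ above by a single-regime expression and applies the Cauchy root test to $K_t\theta^{2t}$. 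Your route is more economical and yields the precise growth rate $K_t=\Theta(\theta^{-t})$, at the cost of tracking the recursion for $A_t$ explicitly; the paper's route avoids that bookkeeping but needs two distinct arguments. One small point: Lemma~\ref{conv-ln} as stated only furnishes the upper bound and the limit for $-\ln q(\g)/\g$; for your upper bound on $K_t\g_t$ you also need the elementary lower bound $-\ln q_t\ge 1-q_t=\eta\g_t(2-\g_tL)$, which the paper invokes inside its root-test argument but you should state when filling in the details.
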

\begin{proof} The result will follow from 
Proposition~\ref{prop:lipschitzgrad} provided we verify that
Assumption~\ref{assum:step_error} holds, i.e., 
$\sum_{k=0}^\infty \g_k = \infty$ and $\sum_{k=0}^\infty \g_k^2 < \infty$. 
According to Phase~II$_t$ of the cascading scheme, we have
$\g_k=\g_t$ for $k=\bar K_{t-1}+1,\ldots, \bar K_t$ with 
$\g_t=\theta^t\g_0$ and $\bar K_t=\bar K_{t-1}+K_t$. Therefore
\[\sum_{k=0}^\infty \g_k = \g_0 \sum_{j=0}^\infty K_j\theta^j,\qquad
 \sum_{k=0}^\infty \g_k^2 =\g_0^2 \sum_{j=0}^\infty K_j\theta^{2j}.\]
Thus, we need to show  
\[\sum_{j=0}^\infty K_j\theta^j=\infty,\qquad
\sum_{j=0}^\infty K_j\theta^{2j}<\infty.\]

From the definition of $K_t$ 
in~\eqref{epoch:Kt} we have
\begin{equation}\label{eqn:rel1}
q_t^{K_t} 2^t\left(\prod_{j=0}^{t-1}q_j^{K_j}\right) D^2 
> \frac{\g^2_t\nu^2}{1-q_t},\end{equation}
while $K_t+1$ satisfies
\begin{equation}\label{eqn:rel2}
q_t^{K_t+1} 2^t\left(\prod_{j=0}^{t-1}q_j^{K_j}\right) D^2 
\le \frac{\g^2_t\nu^2}{1-q_t}.\end{equation}
Relation~\eqref{eqn:rel2} and the fact $\g_t=\theta^t\g_0$
(see Phase~II$_t$ of the cascading scheme) yield
\begin{align*}
q_{t}^{K_t+1} \left(\prod_{j=0}^{t-1}q_j^{K_j}\right)  \le 
\frac{\g_0^2\left(\frac{\theta^{2}}{2}\right)^t\nu^2}{D^2(1-q_{t})}  
\implies \left(\prod_{j=0}^{t}q_j^{\tilde K_j}\right)  \le 
q_{t}^{K_t+1} \left(\prod_{j=0}^{t-1}q_j^{K_j}\right)  \le
\frac{\g_0^2\left(\frac{\theta^{2}}{2}\right)^t\nu^2}{D^2(1-q_{t})},
\end{align*}
where $\tilde K_j = K_j +1.$ Consequently,  by taking logarithms and noting 
that $q_j\in (0,1)$ for all $j$ (since $\g_0\theta^j\in(0,2/ L)$ by the choice
of $\g_0$ and $\theta\in(0,1)$) we have
\begin{align*}
\sum_{j=0}^t \tilde K_j \ln (q_j)  \leq \ln
\left(\frac{\g_0^2\left(\frac{\theta^{2}}{2}\right)^t\nu^2}
{D^2(1-q_{t})}\right)
	\implies 
 \sum_{j=0}^t \tilde K_j (-\ln (q_j))  \geq -\ln
\left(\frac{\g_0^2\left(\frac{\theta^{2}}{2}\right)^t\nu^2}
{D^2(1-q_{t})}\right).
\end{align*}
Therefore, by multiplying and dividing by $\g_0\theta^j$, we obtain
\begin{align*}
\g_0\sum_{j=0}^t \tilde K_j \theta^j\,\left(\frac{-\ln (q_j)}{\g_0\theta^j}
\right)
& \geq -\ln
\left(\frac{\g_0^2\left(\frac{\theta^{2}}{2}\right)^t\nu^2}
{D^2(1-q_{t})}\right).
\end{align*}
Note that $q_j=1-2\eta\g_0\theta^j+\eta L (\g_0\theta^j)^2$ with 
$\g_0\in (0,2/ l)$ and $\theta\in(0,1)$. Thus, by Lemma~\ref{conv-ln} we have
$\frac{-\ln(q_j)}{\g_0\theta^j}\le 2\eta L /(L-\eta)$, implying
\begin{align*}
\frac{2\g_0\eta L}{L-\eta}\ \sum_{j=0}^t \tilde K_j \theta^j 
& \geq -\ln
\left(\frac{\g_0^2\left(\frac{\theta^{2}}{2}\right)^t\nu^2}
{D^2(1-q_{t})}\right).
\end{align*}
Taking limits on both sides, we have that 
\begin{align*}
\frac{2\g_0\eta L}{L-\eta}\ \sum_{j=0}^\infty \tilde K_j \theta^j 
& \geq \lim_{t \to \infty} 
-\ln\left(\frac{\g_0^2\left(\frac{\theta^{2}}{2}\right)^t\nu^2}{D^2(1-q_{t})}
\right).
\end{align*}
The limit on the right can be simplified by substituting 
$q_t=1-2\eta
		\gamma_0 \theta^t + \eta L \gamma_0^2 \theta^{2t}$,
	leading to   
\begin{align*}
 \quad -\lim_{t \to \infty} \ln 
\left(\frac{\g_0^2\left(\frac{\theta^{2}}{2}\right)^t\nu^2}{1-q_t}\right) 
 = - \lim_{t \to \infty} \ln 
\left(\frac{\g_0^2\left(\frac{\theta}{2}\right)^t\nu^2}{D^2(2\eta
		\g_0\theta^t  - \eta L \g_0^2 \theta^{t})}\right)  = +\infty,
\end{align*}
where we also use $\theta\in(0,1)$.
Hence, $\sum_{j=0}^t \tilde K_j \theta^j = +\infty.$
Since $\tilde K_j=K_j+1$ and $\theta\in(0,1)$, it follows that
$$\infty = \sum_{j=0}^{\infty} \tilde K_j \theta^j = \sum_{j=0}^\infty K_j
\theta^j + \sum_{j=0}^{\infty} \theta^j = \sum_{j=0}^\infty K_j
\theta^j + \frac{1}{1-\theta}, $$
implying that $\sum_{j=0}^{\infty} K_j \theta^j = \infty.$

It remains to show that $\sum_{t=0}^{\infty}{K_t\theta^{2t}} < \infty.$ 
From \eqref{eqn:rel1} and the fact $q_j\in(0,1)$ for all $j$, we have that 
\begin{align*}
\frac{\g_0^2\left(\frac{\theta^{2}}{2}\right)^t\nu^2}{D^2(1-q_{t})} \leq
\prod_{j=0}^t q_j^{K_j} \leq q_t^{K_t}.
\end{align*}
This allows for obtaining an upper bound on $K_t$, given by 
\begin{align}\label{eqn:boundKt}
K_t \leq \frac{ \ln\left(
\frac{\g_0^2\left(\frac{\theta^{2}}{2}\right)^t\nu^2}{D^2(1-q_{t})}\right)}{
			\ln q_t}. 
\end{align}
The desired result will follow by the Cauchy root test, if we show that 
$$ \lim_{t \to \infty} (K_t \theta^{2t})^{1/t}  < 1. $$
By noting that $(K_t \theta^{2t})^{1/t} = \theta^2 (K_t)^{1/t}$, it
suffices to use the upper bound on $K_t$ in~\eqref{eqn:boundKt}.
We proceed to analyze this bound, for which 
by letting $\beta(\g)=1-q(\g)$ and recalling that $q_t=q(\g_t)$ we have
\[
\ln\left(
\frac{\g_0^2\left(\frac{\theta^{2}}{2}\right)^t\nu^2}{D^2(1-q_{t})}\right)
=t \ln \frac{\theta}{2} 
+ \ln \left(\frac{\g_0^2\nu^2}{D^2}\right) -\ln (\beta(\g_t)).\] 
Thus, 
\begin{align*}
K_t^{1/t}  \leq   \left( \frac{t \ln \frac{\theta}{2} 
+ \ln \left(\frac{\g_0^2\nu^2}{D^2}\right) -\ln (\beta(\g_t))}
{\ln (q_t) }\right)^{1/t}
= \left( \frac{- t \ln \frac{\theta}{2} 
- \ln \left(\frac{\g_0^2\nu^2}{D^2}\right) +\ln (\beta(\g_t))}
{-\ln (q_t) }\right)^{1/t}.
\end{align*}
Noting that $\beta(\g)\in (0,1)$ for all $\g$ when $\eta<L$, 
we have $\ln(\beta(\g))<0$, implying
\begin{align}\label{eqn:rel3}
K_t^{1/t}  \leq  \left( \frac{- t \ln \frac{\theta}{2} 
- \ln \left(\frac{\g_0^2\nu^2}{D^2}\right)}
{-\ln (q_t) }\right)^{1/t}.
\end{align}
Since $\beta(\g)\in (0,1)$, the denominator can be 
expanded in Taylor series as follows:
\begin{align*}
-\ln(q_t)=-\ln (1-\beta(\g_t))= \sum_{k=1}^\infty\frac{\beta^k(\g_t)}{k}
\ge \beta(\g_t).\end{align*}
Furthermore, since $\beta(\g_t)= \eta \g_t (2- L\g_t)$
and $\g_t=\g_0\theta^t$ with $\theta\in(0,1)$,  
we have $\g_0\theta^t\le 1$ for $t$ large enough,
implying $\beta(\g_t)\ge\eta\g_0\theta^t$. Thus, 
\begin{align}\label{eqn:rel4}
-\ln(q_t)\ge \eta\g_0\theta^t\qquad\hbox{for $t$ large enough}.\end{align}
By combining \eqref{eqn:rel3} and \eqref{eqn:rel4}, we obtain
for $t$ large enough,
\begin{align*}
K_t^{1/t}
& \le t^{1/t}\left( \frac{- \ln \frac{\theta}{2} 
- \frac{1}{t}\ln\left(\frac{\g_0^2\nu^2}{D^2}\right)}
    { \eta \g_0\theta^t }\right)^{1/t}
=\frac{ t^{1/t}}{\theta (\eta\g_0)^{1/t}} \left(- \ln \frac{\theta}{2} 
- \frac{1}{t} \ln\left(\frac{\g_0^2\nu^2}{D^2}\right) \right)^{1/t}. 
 \end{align*}
By recalling that $\lim_{t \to \infty} {t^{1/t}} = 1$
and $\lim_{t \to \infty} {c^{1/t}} = 1$ for any $c>0$, it
follows that
\begin{align*}
\lim_{t\to\infty} K_t^{1/t}
& \le \frac{1}{\theta}
\,\lim_{t\to\infty}\left(- \ln \frac{\theta}{2} 
- \frac{1}{t} \ln\left(\frac{\g_0^2\nu^2}{D^2}\right) \right)^{1/t}. 
 \end{align*}
We next examine the limit on the right hand side. 
Letting $a = -\ln (\theta/2)$ and $b = -\ln
		\left(\frac{\g_0^2\nu^2}{D^2} \right)$,
we can write
$$ \lim_{t \to \infty} \left(a+\frac{b}{t}\right)^{1/t} 
= \lim_{t \to \infty}a^{1/t} \left(1+\frac{b}{at}\right)^{1/t} 
=   \lim_{t \to \infty}
a^{1/t} \lim_{t \to \infty}\left(1+\frac{b}{at}\right)^{1/t} = 1.$$
Therefore, $
\lim_{t\to\infty} K_t^{1/t} \le \frac{1}{\theta},$
implying that
\[\lim_{t\to \infty} (K_t \theta^{2t})^{1/t} \le \theta <1,\]
As a consequence, the Cauchy-root test is satisfied and
$\sum_{t=0}^{\infty} K_t \theta^{2t} < \infty.$
\end{proof}

\section{Addressing nondifferentiability through Local Randomized Smoothing}
\label{sec:localrand}
In this section, we develop a smoothing approach for solving stochastic
optimization problem with nonsmooth integrands. In Section~\ref{sec:approx},  
given a nondifferentiable function $f(x)$,  we introduce a smooth approximation
for $f(x)$, denoted by ${\hat f}(x)$ by using local random perturbations. 
In Section \ref{sec:uniform}, we derive Lipschitz constants for the gradients 
associated with this smooth approximation when the smoothing is introduced via 
a uniform distribution. Finally, in Section~\ref{sec:rnd_SA}, the convergence 
theory of stochastic approximation schemes is examined in this modified regime.

\subsection{Differentiable Approximation}\label{sec:approx}
We let $f$ be nondifferentiable and consider its 
approximation $\hat f$, defined by
\begin{equation}\label{eqn:aproxf}
\hat f(x) \triangleq \EXP{f(x+z)},
\end{equation}
where the expectation  is with respect to $z\in\mathbb{R}^n$, a
random vector with a compact support.  Suppose that $z\in\mathbb{R}^n$
is a random vector with a probability distribution over the
$n$-dimensional ball centered at the origin and with radius $\e$. For
the function $\hat f$ to be well defined, we need to enlarge the
underlying set $X$ so that $f(x+z)$ is defined for every $x\in X$. In
particular, for a set $X\subseteq\mbR^n$ and $\e>0$, we let $X_\e$ be
the set defined by: \[X_\e=\{y\mid y=x+z,\ x\in X,\ z\in\mbR^n,\
						   \|z\|\le \e\}.\]

We discuss our local smoothing technique under the assumption that the
function $f$ has uniformly bounded subgradients over the set $X_\e$,
		 given as follows. 
\begin{assumption}\label{assum:bounded_subgradients}
The subgradients of $f$ over $X_\e$ are uniformly bounded, i.e.,
there is a scalar $C>0$ such that $\|g\|  \le C$ for all $g\in\partial f(x)$ 
and $x \in X_\e$.
\end{assumption}

Assumption~\ref{assum:bounded_subgradients} is satisfied, for
example, when $X$ is bounded.  In the sequel, we let $\EXP{g(x+z)}$
denote the vector-valued integral of an element from the set of
subdifferentials, which is given by
\begin{equation}\label{eqn:expsubdiff}
\EXP{g(x+z))}=\left\{\bar g=\int_{\mbR^n}g(x+z)p{_u}(z )dz
\,\Big|\, g(x+z)\in\partial f(x+z)\, a.s.\right\}.
\end{equation}
The following lemma presents properties of the randomized
technique~(\ref{eqn:aproxf}) with an arbitrary local random distribution 
over a ball. It states that, under the boundedness of the subgradients of $f$, 
the set $\EXP{g(x+z)}$ defined above is a singleton. In particular, the lemma 
shows that $\hat f$ is convex and differentiable approximation of $f$.

\begin{lemma}\label{lemma:approxf}
Let $z\in\mathbb{R}^n$ be a random vector with the density distribution 
support contained in the $n$-dimensional ball centered at
the origin and with a radius $\e$, and let $\EXP{z}=0$. 
Let $X\subseteq \mathbb{R}^n$ be a convex set and let the function $f(x)$ be 
defined and convex on the set $X_\e$, where $\e>0$ is the parameter
characterizing the distribution of $z$.
Also, let Assumption~\ref{assum:bounded_subgradients} hold. Then, for the
function $\hat f$ given in~\eqref{eqn:aproxf}, we have:
\begin{itemize}
    \item[(a)] $\hat f$ is convex and differentiable over $X$,
with gradient
\[\nabla \hat f(x) = \EXP{g(x+z)}\hbox{ for all }x\in X,\]
where the vector $\EXP{g(x+z)}$  is as defined in \eqref{eqn:expsubdiff}.
Furthermore, $\|\nabla \hat f(x)\|\le C$ for all $x\in X$.
    \item[(b)] $f(x) \le \hat f(x) \le f(x)+ \e C$ for all $x\in X$.
\end{itemize}
\end{lemma}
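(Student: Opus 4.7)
The plan is to obtain convexity by pointwise inheritance from $f$, establish differentiability by invoking the subgradient interchange identity~\eqref{eqn:expdiffer} and then showing that the resulting set of expected subgradients collapses to a singleton, and finally derive the two inequalities in (b) by applying the subgradient inequality at the two natural base points $x$ and $x+z$.

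For convexity in (a), for any $x,y\in X$ and $\lambda\in[0,1]$, the convexity of $f$ on $X_\varepsilon$ gives $f(\lambda(x+z)+(1-\lambda)(y+z))\le \lambda f(x+z)+(1-\lambda)f(y+z)$ almost surely, and taking expectations yields convexity of $\hat f$. For differentiability and the formula for $\nabla \hat f$, I would apply the Bertsekas interchange result behind~\eqref{eqn:expdiffer} to the integrand $F(x,z)=f(x+z)$; since $\partial_x F(x,z)=\partial f(x+z)$ under translation, this gives $\partial \hat f(x)=\EXP{\partial f(x+z)}$ in the sense of~\eqref{eqn:expsubdiff}. To promote this to differentiability, I would use that a finite convex function is differentiable at Lebesgue-almost every point of the interior of its domain; since $z$ is absolutely continuous with respect to Lebesgue measure (it has a density supported in $\varepsilon\mathbb{B}$), the event $\{x+z\text{ is a non-differentiability point of }f\}$ has probability zero. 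Hence $g(x+z)=\nabla f(x+z)$ a.s.\ for every admissible selection in~\eqref{eqn:expsubdiff}, so the set $\EXP{g(x+z)}$ reduces to a single vector, which is both a subgradient and the unique element of $\partial \hat f(x)$, i.e.\ $\nabla \hat f(x)$. The bound $\|\nabla \hat f(x)\|\le C$ then follows from Jensen's inequality applied to the norm and Assumption~\ref{assum:bounded_subgradients}.

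For part (b), the lower bound comes from picking any $g(x)\in\partial f(x)$ and using the subgradient inequality $f(x+z)\ge f(x)+g(x)^T z$; taking expectations and recalling $\EXP{z}=0$ yields $\hat f(x)\ge f(x)$. For the upper bound I would apply the subgradient inequality at the base point $x+z$, namely $f(x)\ge f(x+z)+g(x+z)^T\bigl(x-(x+z)\bigr)=f(x+z)-g(x+z)^T z$, which gives $f(x+z)\le f(x)+\|g(x+z)\|\,\|z\|\le f(x)+C\varepsilon$ by Cauchy-Schwarz, Assumption~\ref{assum:bounded_subgradients}, and the support condition on $z$; taking expectations completes (b). The main obstacle is the singleton argument for $\EXP{g(x+z)}$: one must combine the almost-everywhere differentiability of convex functions with absolute continuity of the distribution of $z$ to ensure that the set defined in~\eqref{eqn:expsubdiff} is well-posed as a single vector, and simultaneously verify the measurability/integrability hypotheses required to interchange subdifferentiation and expectation as in~\eqref{eqn:expdiffer}; the remainder of the proof is a direct application of the convex subgradient inequality.
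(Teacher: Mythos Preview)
Your proposal is correct and follows essentially the same route as the paper. The only cosmetic differences are that the paper outsources the convexity/differentiability argument in (a) to \cite{DeFarias08} rather than spelling out the a.e.-differentiability-plus-density argument you give, and for the lower bound in (b) it invokes Jensen's inequality directly ($f(x)=f(\EXP{x+z})\le\EXP{f(x+z)}$) rather than the subgradient inequality at $x$ combined with $\EXP{z}=0$; these are equivalent here.
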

\begin{proof}
(a) \ For the convexity and differentiability of $\hat f$ see the
proof\footnote{There, the vector $z$ has a normal zero-mean
distribution. Furthermore, the proof is applicable to a convex
function defined over $\mbR^n$. However, the analysis can be
extended in a straightforward way to the case when $f$ is defined
over an open convex set $\mathcal{D}\subset\mbR^n$, since the
directional derivative $f'(x; d)$ is finite for each
$x\in\mathcal{D}$ and for any direction $d\in\mbR^n$ (Theorem~23.1
in \cite{Rockafellar70}).} of Lemma~3.3(a) in \cite{DeFarias08}. The
gradient boundedness follows by
Assumption~\ref{assum:bounded_subgradients},
relation~\eqref{eqn:expsubdiff}, and $\nabla \hat f(x) =
\EXP{\partial f(x+z)}$.

\noindent (b) \ By definition of random vector $z$, it has zero
mean, i.e., $\EXP{x+z}=x$, so that $f(\EXP{x+z})=f(x)$. Therefore,
by Jensen's inequality and the definition of $\hat f$, we have
\[f(x)=f(\EXP{x+z}) \leq \EXP{f(x+z)}=\hat f(x)\qquad\hbox{for all }x\in X.\]

To show relation $\hat f(x)\le f(x)+\e C$, we use the subgradient
inequality for $f$, which in particular implies that, for every
$\bar x \in X_\e$ and $g\in \partial f(\bar x)$, we have
\[f(\bar x) \le f(x) +\|g\|\,\|x-\bar x\|\qquad\hbox{for all }x\in X_\e.\]
Since $\bar x\in X_\e$, we have $\bar x=x+z$ for some $x\in X$ and
$z$ with $\|z\|\le\e$. Using this and the subgradient boundedness,
from the preceding relation we obtain
\[f(x+z) \leq f(x) +C\e\qquad\hbox{for all }x\in X.\]
Thus, by taking the expectation, we get
$\hat f(x) =\EXP{f(x+z)}\le f(x) +\e C$ for all $x\in X.$
\end{proof}

\subsection{Smoothing via random variables with uniform distributions}
\label{sec:uniform}
 In this subsection, we consider a local smoothing technique wherein $z$
 is generated via a uniform distribution. Other distributions may also work
 such as normal, considered in~\cite{DeFarias08}.
 However, distributions with finite support seem more appropriate 
 for capturing local behavior of a function, as well as to deal with
 the problems where the function itself has a restricted domain. Our choice
 to work with a uniform distribution is due to the uniform distribution lending
 itself readily for computation of resulting Lipschitz
 constant and for assessment of the growth of the Lipschitz constant with the
 size of the problem.
 
 The key result of this
 section is an examination of the Lipschitz continuity of the gradients
 of the smooth approximation, particularly in terms of the rate that
 such a constant grows with problem size. 
 
 Suppose 
 $z\in\mathbb{R}^n$ is a random vector with uniform distribution over
 the $n$-dimensional ball centered at the origin and with a radius $\e$,
 i.e., $z$ has the following probability density function:
\begin{equation}\label{eqn:zuniform}
p{_u}(z) = \left\{\begin{array}{ll} \frac{1}{c_n \varepsilon^n}
&\hbox{for }\|z\| \le\e,\cr \hbox{} &\hbox{}\cr 0
&\hbox{otherwise,}\end{array}\right.
\end{equation}
where $c_{n} = \dfrac{\pi^\frac{n}{2}}{\Gamma(\frac{n}{2} + 1)}$,
and $\Gamma$ is the gamma function given by
\begin{eqnarray*}
\Gamma\left(\frac{n}{2}+1\right)= \left\{ \begin{array}{ll}
\left(\frac{n}{2}\right)! &\hbox{if $n$ is even,}\cr
\hbox{}&\hbox{}\cr \sqrt{\pi}\,\frac{n!!}{2^{(n+1)/2}} &\hbox{if $n$
is odd}.
\end{array}\right.
\end{eqnarray*}
The following lemma shows that
$\hat f$ is convex and differentiable approximation of $f$ with
Lipschitz gradients, where the Lipschitz constant for $\nabla \hat f$ 
is related to the norm bound for the subgradients of $f$.
\begin{lemma}\label{lemma:approxf-uni}
Let $z\in\mathbb{R}^n$ be a random vector with uniform 
density distribution {with zero mean} over a $n$-dimensional ball centered at
the origin and with a radius $\e$. Let $X\subseteq \mathbb{R}^n$ be a 
convex set and let the function $f(x)$ be defined
and convex on the set $X_\e$, where $\e>0$ is the parameter
characterizing the distribution of $z$.
Also, let Assumption~\ref{assum:bounded_subgradients} hold. Then, for the
function $\hat f$ given in~\eqref{eqn:aproxf}, we have
 \[\| \nabla \hat f(x) -\nabla \hat f(y)\| \le 
\kappa \dfrac{ n!!}{(n-1)!!} \,\dfrac{C}{\e}\|x-y\| \qquad 
\hbox{for all } x,y\in X,\]
 where $\kappa=
\frac{2}{\pi}$ if $n$ is even, and otherwise $\kappa = 1$.
\end{lemma}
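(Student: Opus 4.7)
My plan is to convert the expectation form of $\nabla\hat f(x)$ into a boundary integral over the sphere, and then bound the difference $\nabla\hat f(x)-\nabla\hat f(y)$ using the Lipschitz property of $f$ (which follows from the subgradient bound in Assumption~\ref{assum:bounded_subgradients}). The starting point is the representation
\[
\nabla \hat f(x) = \int_{\{\|z\|\le \e\}} \nabla_x f(x+z)\, p_u(z)\,dz,
\]
which holds a.e.\ by Lemma~\ref{lemma:approxf}(a) together with Rademacher's theorem applied to the Lipschitz function $f$. Since $p_u$ is constant on $B_\e$ and vanishes outside, an integration by parts (equivalently, the divergence theorem for Lipschitz functions) reduces this to a surface integral over $\partial B_\e$; after the substitution $z=\e\theta$ with $\theta\in S^{n-1}$, this yields
\[
\nabla \hat f(x) \;=\; \frac{1}{c_n\e}\int_{S^{n-1}} f(x+\e\theta)\,\theta\,d\sigma(\theta).
\]

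From here, subtracting the analogous identity for $y$ and using $|f(x+\e\theta)-f(y+\e\theta)|\le C\|x-y\|$ (since $f$ is $C$-Lipschitz under Assumption~\ref{assum:bounded_subgradients}), I would estimate the resulting vector norm by duality:
\[
\|\nabla \hat f(x)-\nabla \hat f(y)\|
\;\le\; \frac{C\|x-y\|}{c_n\e}\,\sup_{\|v\|=1}\int_{S^{n-1}}|v^{T}\theta|\,d\sigma(\theta).
\]
Rotational invariance of surface measure on $S^{n-1}$ reduces the supremum to the single-coordinate integral $I_n:=\int_{S^{n-1}}|\theta_1|\,d\sigma(\theta)$. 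Slicing $S^{n-1}$ by the hyperplane $\theta_1=t$ and using the standard identity $\int_{S^{n-1}}g(\theta_1)d\sigma=\omega_{n-2}\int_{-1}^{1} g(t)(1-t^2)^{(n-3)/2}\,dt$ gives $I_n = 2\omega_{n-2}/(n-1)=2c_{n-1}$, where I use $\omega_{n-2}=(n-1)c_{n-1}$.

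Combining these steps yields the clean expression $\|\nabla \hat f(x)-\nabla \hat f(y)\|\le (2Cc_{n-1}/(c_n\e))\|x-y\|$, and the remainder of the proof is a direct calculation of the ratio $c_{n-1}/c_n$ using $c_n=\pi^{n/2}/\Gamma(n/2+1)$. I would split into cases $n=2m$ and $n=2m+1$, using $\Gamma(m+\tfrac12)=\tfrac{(2m-1)!!}{2^m}\sqrt{\pi}$ and the identifications $(2m)!!=2^{m}m!$, $(2m+1)!!=(2m+1)(2m-1)\cdots 1$, to obtain
\[
\frac{c_{n-1}}{c_n} \;=\; \begin{cases} \dfrac{n!!}{\pi\,(n-1)!!}, & n\text{ even},\\[1ex] \dfrac{n!!}{2\,(n-1)!!}, & n\text{ odd},\end{cases}
\]
which after multiplication by $2$ delivers the stated constant $\kappa\,n!!/(n-1)!!$ with $\kappa=2/\pi$ for $n$ even and $\kappa=1$ for $n$ odd.

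The main obstacle I anticipate is step~1: rigorously passing from the subgradient-expectation representation to the surface-integral form for a merely Lipschitz $f$. This requires either invoking a version of the Gauss--Green theorem valid for Lipschitz integrands or approximating $f$ by mollified convex functions $f_\delta$, applying the classical divergence theorem, and passing to the limit $\delta\downarrow 0$ using uniform convergence on compact sets and the uniform subgradient bound. The subsequent spherical-integral evaluation and the factorial bookkeeping are routine but must be done carefully to produce the even/odd $\kappa$ split exactly as stated.
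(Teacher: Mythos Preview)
Your proposal is correct and takes a genuinely different route from the paper. The paper does not use the surface-integral representation of $\nabla\hat f$. Instead, it changes variables to write $\nabla\hat f(x)=\int g(v)\,p_u(v-x)\,dv$, bounds
\[
\|\nabla\hat f(x)-\nabla\hat f(y)\|\le C\int_{X_\e}|p_u(z-x)-p_u(z-y)|\,dz,
\]
and then estimates this $L^1$-distance between shifted uniform densities geometrically: when $\|x-y\|>2\e$ the balls are disjoint and the integral equals $2$; when $\|x-y\|\le2\e$ the integral equals $(2/c_n\e^n)V_S$, where $V_S$ is the volume of one ``lens-complement'' region, which is bounded via convexity of the spherical-cap volume function $V_{\rm cap}(d)$ to get $V_S\le c_{n-1}\e^{n-1}\|x-y\|$. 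Both routes land on the same constant $2c_{n-1}/c_n$ and the same even/odd factorial bookkeeping. Your approach is cleaner in that it avoids the two-case split and the cap-volume estimate, reducing everything to the single spherical integral $\int_{S^{n-1}}|\theta_1|\,d\sigma=2c_{n-1}$. The price is the integration-by-parts step for a merely Lipschitz $f$, which (as you note) needs either a Gauss--Green theorem for Lipschitz integrands or a mollification argument; the paper's density-shift argument sidesteps this by working only with the a.e.\ subgradient bound and elementary volume comparisons.
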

\begin{proof}
\ From Lemma~\ref{lemma:approxf}(a) and
relation~\eqref{eqn:expsubdiff}, for any $x \in X$, there is a
vector $g(z+x)$ such that $g(z+x)\in\partial f(x+z)$ a.s.\ and
\[\nabla \hat f(x)= \int_{\mathbb{R}^n}g(x+z)p_u(z )dz
=\int_{\mathbb{R}^n}g(v)p(v-x)dv,\]
where the last equality follows by letting $v=x+z$. Therefore,
for any $x,y\in X$, 
\begin{eqnarray}\label{eqn:grad_diff}
\|\nabla \hat f(x) -\nabla \hat f(y)\|
&=&\left\|\int_{X_{\e}}(p{_u}(z -x)-p_u(z -y))g(z)dz\right\|
\cr
&\leq & \int_{X_{\e}}|p_u(z -x)-p_u(z -y)|\|g(z)\| dz\cr
&\leq&  C \int_{X_\e}|p_u(z -x)-p_u(z -y)|dz,
\end{eqnarray}
where the last inequality follows by using the boundedness of the
subgradients of $f$ over $X_\e$.

Now, we let $x,y\in X$ be arbitrary but fixed, and we 
estimate $\int_{X_\e}|p{_u}(z -x) - p{_u}(z -y)| dz$
in~\eqref{eqn:grad_diff}. For this we consider the cases where 
$\|x-y\|>2\e$ and $\|x-y\|\le 2\e$.

\noindent{\it Case 1} ($\|x-y\|> 2\e$): For every $z$ with
$\|z - x \| \le \e$, we have $\|z -y\| > \e$, implying that
$p{_u}(z -y)=0$, so that $\int_{\| z - x \|\le \e} |p{_u}(z
-x) - p{_u}(z -y)|dz=1$. Likewise, for every $z$ with $\|z - y \|
\le \e$, we have $p{_u}(z -x)=0$, implying 
$$\int_{\| z - y \| \le \e}|p{_u}(z -x) - p{_u}(z -y)|dz=1.$$ 
Therefore, 
\begin{align*}\label{eqn:case1}
\int_{X_\e}{|p_u(z -x) - p_u(z -y)|}dz 
&= \int_{\| z -x \| \le \e} |p_u(z -x) - p_u(z -y)|dz  
+\int_{\| z - y \| \le \e}|p_u(z -x) - p_u(z -y)|dz \cr
&= 2.
\end{align*}
Since $2 < \| x-y \|/\e$, it follows that
\begin{equation}\label{eqn:case1_est}
\int_{X_\e}{|p_u(z -x) - p_u(z-y)|}dz
\le\frac{\|x-y\|}{\e}.\end{equation} It can be further seen
that $\kappa \frac{n!!}{(n-1)!!}\ge1$ for all $n\ge1$,
which combined with~\eqref{eqn:case1_est} and~\eqref{eqn:grad_diff}
yields the result.

\noindent{\it Case  2} ($\|x-y\|\le 2\e$): We decompose the integral
in~\eqref{eqn:grad_diff} over several regions, as follows:
\begin{equation*}
\begin{split}
&\int_{X_\e}{|p_u(z -x)-p_u(z -y)| }dz\cr
&=\int_{\|z-x\|\le\e\ \&\ \|z -y \|\le\e}|p_u(z -x) -p_u(z -y)|dz 
+\int_{\|z-x\|\le\e\ \&\ \|z- y\|\ge\e}|p_u(z-x) - p_u(z -y)|dz\cr 
&\quad +\int_{\|z-x\|\ge\e\ \&\
\|z-y\|\le \e}|p_u(z -x) - p_u(z -y)|dz
+\int_{\|z-x\|\ge\e\ \& \ \|z-y\|\ge\e}|p_u(z -x) - p_u(z-y)|dz.
\end{split}
\end{equation*}
The first and the last integrals are zero, since 
$p_u(z-x)=p_u(z -y)$ for $z$ in the integration region there.
Furthermore, in the other two integrals, the supports of $p_u(z-x)$ and 
$p_u(z -y)$ do not intersect, so that we have
$|p_u(z -x)-p_u(z -y)|=1/(c_n\e^n)$ for $z$ in the integration region there. 
Using this and the symmetry of these integrals, 
by letting $S=\{z \in \mbR^n \mid\| z - x \| \le \e\hbox{ and } \| z - y \| 
\geq \e\}$, we obtain
\begin{equation}\label{eqn:volout1}
\int_{X_\e} |p_u(z -x) - p_u(z -y)| dz
=\frac{2}{c_n\e^n}\, V_S,\end{equation} 
where $V_S$ denotes the volume of the set $S$.

Now we want to find an upper bound for $V_S$ in terms of $\| y - x\|$. 
Let $V_{cap}(d)$ denote the volume of the spherical cap with the
distance $d$ from the center of the sphere. Therefore,
\begin{equation}\label{eqn:volout2}
V_S=c_n\e^n-2V_{cap}\left(\frac{\|x-y \|}{2}\right).\end{equation}
The volume of the $n$-dimensional spherical cap with distance $d$
from the center of the sphere can be calculated in terms of the
volumes of $(n-1)$-dimensional spheres, as follows:
\[V_{cap}(d)
=\int_{d}^{\e}c_{n-1}\left(\sqrt{\e^2-\rho^2}\right)^{n-1}d\rho
\qquad\hbox{for }d \in [0,\e],\] with
$c_n=\dfrac{\pi^{n/2}}{\Gamma(\frac{n}{2}+1)}$ for $n\ge1$. We have
for $d \in [0,\e]$,
\begin{equation*}
\begin{split}
&V_{cap}'(d)= -c_{n-1}(\e^2-d^2)^{\frac{n-1}{2}} \leq 0,\cr
&V_{cap}''(d)= (n-1)c_{n-1}d(\e^2-d^2)^{\frac{n-3}{2}} \geq 0,
\end{split}
\end{equation*}
where $V_{cap}'$ and $V_{cap}''$ denote the first and the second
derivative, respectively, with respect to $d$. Hence, $V_{cap}(d)$
is convex over $[0,\varepsilon]$, and by the subgradient inequality
we have
\[V_{cap}(0)+V_{cap}'(0)\,d\le V_{cap}(d)\qquad\hbox{for }d\in[0,\e].\]
Since $V_{cap}(0)=\frac{1}{2}c_n\e^n$ and
$V_{cap}'(0)=-c_{n-1}\e^{n-1} $, it follows
\begin{equation}\label{eqn:vdlower}
\frac{1}{2}c_n\e^n-c_{n-1}\e^{n-1}d\le V_{cap}(d)
\qquad\hbox{for }d\in[0,\e].\end{equation} 
Noting that $\|x-y\|/2\le \e$ (since $\|x-y\|\le 2\e$), we can let 
$d=\|x-y\|/2\le \e$ in~\eqref{eqn:vdlower}. By doing so and 
using~\eqref{eqn:volout2}, we obtain
\[V_S=c_n\e^n-2V_{cap}\left(\frac{\|x-y \|}{2}\right)
\le 2 c_{n-1}\e^{n-1}\frac{\|x-y\|}{2}.\] 
Finally, substituting the preceding relation in~\eqref{eqn:volout1}, we have
\[\int_{X_\e}|p_u(z -x)-p_u(z -y)|dz 
\le\frac{2c_{n-1}}{c_n}\,\frac{\|x-y\|}{\e}.\]
Since $c_n=\frac{\pi^{n/2}}{\Gamma(\frac{n}{2}+1)}$, it can be seen
that
\begin{equation}\label{rate-cn}
\frac{2c_{n-1}}{c_n}=\kappa \frac{n!!}{(n-1)!!},
\end{equation}
with $\kappa=\frac{2}{\pi}$ if $n$ is even, and otherwise
$\kappa=1$. Thus, we have
\begin{equation}\label{eqn:case2_est}
\int_{\mbR^n}|p{_u}(z -x)-p{_u}(z -y)|dz \le \kappa
\frac{n!!}{(n-1)!!} \,\frac{\|x-y\|}{\e}.\end{equation} 
By combining~\eqref{eqn:case2_est} with~\eqref{eqn:grad_diff}, we
obtain the desired result.
\end{proof}

It can be seen that the Lipschitz constant 
$\kappa\frac{n!!}{(n-1)!!}\,\frac{C}{\e}$ established in
Lemma~\ref{lemma:approxf} for the differentiable approximation $\hat
f$ grows at the rate of $\sqrt{n}$ with the number $n$ of the
variables, i.e.,
\[\lim_{n\to\infty} \ \frac{\kappa \frac{n!!}{(n-1)!!}}
{\sqrt{n}}=\sqrt{\frac{\pi}{2}}.\] This growth rate is worse than
the growth rate $\sqrt{\ln(n+1)}$ obtained in~\cite{DeFarias08} for
the global smoothing approximation, which uses a normally
distributed perturbation vector $z$. However, it should be
emphasized that the smoothing technique in~\cite{DeFarias08}
requires the function $f$ to be defined over the entire space
since $z$ is drawn from a normal distribution, which is a somewhat
stringent requirement. Our proposed local smoothing technique removes such 
a requirement, but suffers from a worse growth rate.
\subsection{Convergence analysis of the algorithm with local smoothing}
\label{sec:rnd_SA}
In this section, we apply the stochastic approximation scheme presented
in Section~\ref{sec:formulation} to the smooth approximation $\hat f$ of a
nondifferentiable function $f$. First, we consider the case when $f$ is
convex but deterministic and then, we consider the case when $f$ is given as
the expectation of a convex function.

\subsubsection{Deterministic nondifferentiable optimization} 
\label{sec:deterministic}
We apply the local smoothing technique to the minimization of a convex
but not necessarily differentiable function $f$.  In particular, suppose
we want to minimize such a function $f$ over some set $X$. We may first
approximate $f$ by a differentiable function $\hat f$ and then minimize
$\hat f$ over $f$.  In this case, by taking the minimum over $x\in X$ in
the relation in Lemma~\ref{lemma:approxf}(b), we see that $f^*\le \hat
f^*\le f^*+\e C$.  Thus, we may overestimate the optimal value $f^*$ of
the original problem by at most $\e C$, where $C$ is a bound on
subgradient norms of $f$. So we consider the following optimization
problem
\begin{equation}\label{eqn:deterministic_problem}
\min_ {x \in X} \left\{ \hat{f}(x)\right\}, \hbox{ where } \hat{f}(x)
	\triangleq \EXP{f(x+z)}.
\end{equation}
We may solve the problem by considering the
method~\eqref{eqn:algorithm}, which takes the following form
\begin{equation}\label{eqn:algo_approxf}
\begin{split}
x_{k+1} & =\Pi_X[x_k-\g_k(\nabla \hat f(x_k)+w_k)]\qquad\hbox{for
}k\ge0,\cr w_k&=g_k-\nabla \hat f(x_k)\qquad\hbox{with }g_k\in
\partial f(x_k+z_k),
\end{split}
\end{equation}
where $\{z_k\}$ is an i.i.d.~sequence of random variables with
uniform distribution over the $n$-dimensional sphere centered at the
origin and with the radius $\e>0$.

We have the following result.

\begin{proposition}\label{prop:approxfresult}
Let $f$ be defined and convex over some open convex set
$\mathcal{D}\subseteq\mbR^n$. Let $X$ be a closed convex set and let
$\e>0$ be such that $X_\e\subset\mathcal{D}$, where $\e$ is the
parameter of the distribution of the random vector $z$ as given
in~\eqref{eqn:zuniform}. Let Assumptions~\ref{assum:step_error}(a)
and~\ref{assum:bounded_subgradients} hold. Also, assume that 
problem~\eqref{eqn:deterministic_problem} has a solution. Then, the sequence
$\{x_k\}$ generated by method~(\ref{eqn:algo_approxf}) converges
almost surely to some random optimal solution of the problem.
\end{proposition}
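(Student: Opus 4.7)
The plan is to reduce the claim to Proposition~\ref{prop:lipschitzgrad} applied to the smoothed problem~\eqref{eqn:deterministic_problem}. The objective $\hat f$ plays the role of $f$ in the abstract scheme~\eqref{eqn:algorithm}, and the random vector $z_k$ plays the role of the sampling variable $\xi_k$. The optimal set is nonempty by hypothesis, so the task reduces to verifying the structural conditions on $\hat f$ and the statistical conditions on $w_k$.

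For the structural side, since $f$ is convex on $\mathcal{D}$ and $X_\e\subset\mathcal{D}$, Lemma~\ref{lemma:approxf} applies and yields that $\hat f$ is convex and differentiable on (a neighborhood of) $X$ with $\nabla \hat f(x)=\EXP{g(x+z)}$ in the sense of~\eqref{eqn:expsubdiff}, and that $\|\nabla\hat f(x)\|\le C$ on $X$. Moreover, Lemma~\ref{lemma:approxf-uni} provides Lipschitz continuity of $\nabla\hat f$ on $X$ with constant $\kappa\,\frac{n!!}{(n-1)!!}\,\frac{C}{\e}$. Thus all the smoothness hypotheses needed by Proposition~\ref{prop:lipschitzgrad} hold for the smoothed problem.

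For the statistical side, I would set $\sF_k=\{x_0,z_0,\ldots,z_{k-1}\}$. Then $x_k$ is $\sF_k$-measurable and $z_k$ is independent of $\sF_k$, so by Lemma~\ref{lemma:approxf}(a) and definition~\eqref{eqn:expsubdiff},
\[
\EXP{g_k \mid \sF_k} \;=\; \EXP{g(x_k+z) \mid \sF_k} \;=\; \nabla\hat f(x_k),
\]
which gives $\EXP{w_k\mid\sF_k}=0$, matching~\eqref{eqn:zeromean}. By Assumption~\ref{assum:bounded_subgradients}, $\|g_k\|\le C$ a.s., and by Lemma~\ref{lemma:approxf}(a), $\|\nabla\hat f(x_k)\|\le C$, so the triangle inequality gives $\|w_k\|\le 2C$ almost surely for all $k\ge 0$. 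Combined with Assumption~\ref{assum:step_error}(a) this yields
\[
\sum_{k=0}^\infty \g_k^2\,\EXP{\|w_k\|^2\mid\sF_k}\;\le\; 4C^2\sum_{k=0}^\infty \g_k^2\;<\;\infty \quad \text{a.s.},
\]
verifying Assumption~\ref{assum:step_error}(b).

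With all hypotheses of Proposition~\ref{prop:lipschitzgrad} in place for the smoothed problem~\eqref{eqn:deterministic_problem}, the conclusion follows: $\{x_k\}$ converges almost surely to some random point in the optimal set of~\eqref{eqn:deterministic_problem}. There is no real technical obstacle here; the only care needed is bookkeeping, namely identifying the filtration generated by $\{z_k\}$ with the one used in Section~\ref{sec:formulation} and checking that the gradient bound from Lemma~\ref{lemma:approxf}(a) together with the subgradient bound from Assumption~\ref{assum:bounded_subgradients} suffices to place the noise $w_k$ inside the framework of Assumption~\ref{assum:step_error}(b).
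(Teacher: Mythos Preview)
Your proposal is correct and follows essentially the same route as the paper: reduce to Proposition~\ref{prop:lipschitzgrad}, use Lemma~\ref{lemma:approxf}(a) and Assumption~\ref{assum:bounded_subgradients} to bound $\|w_k\|\le 2C$, and combine with $\sum_k\g_k^2<\infty$ to verify Assumption~\ref{assum:step_error}(b). The paper additionally checks Assumption~\ref{assum:convex} explicitly for $F(x,z)=f(x+z)$, while you are more explicit about the filtration and the zero-mean property of $w_k$; these are complementary bookkeeping points rather than differences in strategy.
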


\begin{proof}
We show that the conditions of Proposition~\ref{prop:lipschitzgrad}
are satisfied. In particular, under the given assumptions,
the set $X_\e$ is convex and closed (Corollary 9.1.2 in \cite{Rockafellar70}).
Furthermore, the function $F(x,z)=f(x+z)$ is convex and finite
on some open set containing the set $X_\e$ for any
$z\in \Omega=\{\xi\mid \|\xi\|\le \e\}$. Since $z$ is a random  variable
with uniform distribution on
the sphere $\Omega$, we see that $\EXP{F(x,z)}=\EXP{f(x+z)}$ is finite for
every $x\in X$. Thus, Assumption~\ref{assum:convex} is satisfied.
Since $f$ has bounded subgradients on $X_\e$ and $x_k\in X\subset X_\e$,
we have $\|g_k\|\le C$. By Lemma~\ref{lemma:approxf}(a), 
the gradients $\nabla\hat f(x)$ over $X$ are also bounded
uniformly by $C$. Hence,
\[\|w_k\|\le\|g_k\|+\|\nabla \hat f(x_k)\|\le 2C,\]
implying that $\EXP{\|w_k\|^2\mid\sF_k}\le 4C^2$. In view of this, and
$\sum_{k=0}^\infty \g_k^2<\infty$ (Assumption~\ref{assum:step_error}(a)),
it follows that $\sum_{k=0}^\infty \g_k^2\EXP{\|w_k\|^2\mid\sF_k}<\infty$,
thus showing that Assumption~\ref{assum:step_error}(b) is satisfied.
By Lemma~\ref{lemma:approxf}, the function $\hat f$ is differentiable
with Lipschitz gradients over $X$. Thus, the conditions of
Proposition~\ref{prop:lipschitzgrad} are satisfied and the result follows.
\end{proof}

\subsubsection{Stochastic nondifferentiable optimization}\label{sec:stochopt}
In this section, we apply our local smoothing technique to a
nondifferentiable stochastic problem of the
form~\eqref{eqn:problem}. Essentially, this amounts to putting the
results of Sections~\ref{sec:formulation} and~\ref{sec:rnd_SA}
together. We thus consider the following problem:
\begin{equation}\label{eqn:stoch_problem_init}
\begin{split}
&\begin{array}{cc}
  \hbox{minimize } & \hat f(x)\cr
  \hbox{subject to } & x\in X
 \end{array}\cr
&\hbox{ where } \hat f(x)=\EXP{f(x+z)},\quad f(x)=\EXP{F(x,\xi)},
\end{split}\end{equation}
$F$ is the function as described in section~\ref{sec:formulation}, and
$\hat f$ is a smooth approximation of $f$ with $z$ having a uniform
density $p_u$ as discussed in Section~\ref{sec:localrand}. In view of
Lemma~\ref{lemma:approxf}(a), we know that $\e C$ is an upper bound for
the difference between the optimal value $f^*=\min_{x\in X}f(x)$ and
$\hat f^*=\min_{x\in X} \hat f(x)$, under appropriate conditions to be
stated shortly. Under these conditions, we are interested in solving the
approximate problem in~\eqref{eqn:stoch_problem_init}.

Note that
\[\hat f(x)=\EXP{f(x+z)}=\EXP{\EXP{F(x+z,\xi)\mid \xi}},\]
where the inner expectation is conditioned on $\xi$ and is with
respect to $z$ while the outer expectation is with respect to
$\xi$. We note that the variables $\xi$ and $z$ are independent,
and by exchanging the order of the expectations, we obtain:
\[\hat f(x)=\EXP{\hat F(x,\xi)},\qquad\hbox{with }
\hat F(x,\xi)=\EXP{F(x+z,\xi)}.\] Thus, the problem
in~\eqref{eqn:stoch_problem_init} is equivalent to
\begin{equation}\label{eqn:stoch_problem}
\begin{split}
&\begin{array}{ll}
  \hbox{minimize } & \hat f(x), \hbox{ where }
  \hat{f}(x) =\EXP{\hat F(x,\xi)},\hat F(x,\xi)=\EXP{F(x+z,\xi)}\cr
  \hbox{subject to } & x\in X
 \end{array}\cr
\end{split}\end{equation}

In the following lemma, we provide some conditions ensuring the
differentiability of $\hat F$ with respect to $x$, as well as some
other properties of $\hat F$. The lemma can be viewed as an
immediate extension of Lemma~\ref{lemma:approxf} to the collection
of functions $F(\cdot,\xi)$.

\begin{lemma}\label{lemma:approxstochF}
Let the set $X$ and function $F:\mathcal{D}\times\Omega\to\mbR$
satisfy Assumption~\ref{assum:convex}. Let the parameter $\e$ that
characterizes the distribution of $z$ be such that $X_\e\subset
\mathcal{D}$. In addition, assume that the subdifferential set
$\partial_x F(x,\xi)$ is uniformly bounded over the set
$X_\e\times\Omega$, i.e.,  there is a constant $C$ such that
\[\|s\|\le C\quad\hbox{for all $s\in\partial_x F(x,\xi)$,
and all $x\in X_\e$ and $\xi\in\Omega$}.\] Then, for the
function $\hat F:\mathcal{D}\times\Omega\to\mbR$ given by $\hat
F(x,\xi)=\EXP{F(x+z,\xi)}$, we have:
\begin{itemize}
    \item[(a)]
For every $\xi\in\Omega$, the function $\hat
F(\cdot,\xi)$ is convex and differentiable with respect
to $x$ at every $x\in X$, and the  gradient $\nabla_x \hat F(x,\xi)$
is given by
\[\nabla \hat F(x,\xi) = \EXP{\partial F(x+z,\xi)}
\qquad\hbox{for all }x\in X.\] Furthermore, $\|\nabla_x \hat
F(x,\xi)\|\le C$ for all $x\in X$ and $\xi\in\Omega$.
    \item[(b)]
$F(x,\xi) \le \hat F(x,\xi) \le F(x,\xi)+ \e C$ for all
$x\in X$ and $\xi\in\Omega$.
    \item[(c)]
$\|\nabla_x \hat F(x,\xi) -\nabla_x \hat
F(y,\xi)\| \le \kappa \dfrac{ n!!}{(n-1)!!}
\,\dfrac{C}{\e}\|x-y\|$ for all $x,y\in X$ and
$\xi\in\Omega$, where $\kappa= \frac{2}{\pi}$ if $n$ is
even, and otherwise $\kappa = 1$.
\end{itemize}
\end{lemma}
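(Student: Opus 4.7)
The plan is to recognize that Lemma~\ref{lemma:approxstochF} is essentially a parameterized version of Lemmas~\ref{lemma:approxf} and~\ref{lemma:approxf-uni}, applied to each function $F(\cdot,\xi)$ for $\xi\in\Omega$ fixed. Since the uniform subgradient bound $\|s\|\le C$ holds over $X_\e\times\Omega$, the hypothesis of Assumption~\ref{assum:bounded_subgradients} applies to $F(\cdot,\xi)$ for every $\xi$, with the same constant $C$. Thus for each fixed $\xi$, I can treat $F(\cdot,\xi)$ as the convex function ``$f$'' of the earlier lemmas, with $\hat F(\cdot,\xi)=\EXP{F(\cdot+z,\xi)}$ playing the role of ``$\hat f$''.

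For part (a), I would fix $\xi\in\Omega$ and apply Lemma~\ref{lemma:approxf}(a) to $F(\cdot,\xi)$. Convexity of $\hat F(\cdot,\xi)$ on $X$ and differentiability with $\nabla_x \hat F(x,\xi)=\EXP{\partial_x F(x+z,\xi)}$ (in the singleton sense of~\eqref{eqn:expsubdiff}) follow directly, and the gradient bound $\|\nabla_x \hat F(x,\xi)\|\le C$ is inherited from the uniform subgradient bound. For part (b), I would apply Lemma~\ref{lemma:approxf}(b) to $F(\cdot,\xi)$ for fixed $\xi$, which yields the pointwise sandwich $F(x,\xi)\le \hat F(x,\xi)\le F(x,\xi)+\e C$ for each $\xi\in\Omega$. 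For part (c), I would invoke Lemma~\ref{lemma:approxf-uni} applied to $F(\cdot,\xi)$ with the same $C$, which yields the Lipschitz bound for $\nabla_x \hat F(\cdot,\xi)$ with constant $\kappa\,\frac{n!!}{(n-1)!!}\,\frac{C}{\e}$.

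The only thing that needs a brief justification beyond ``apply the previous lemma pointwise'' is verifying the hypotheses of the earlier lemmas hold for each $F(\cdot,\xi)$. Assumption~\ref{assum:convex} gives convexity of $F(\cdot,\xi)$ on $\mathcal{D}$, and since $X_\e\subset\mathcal{D}$, we have $F(\cdot,\xi)$ convex and finite on an open set containing $X_\e$. The uniform subgradient bound assumed in the statement is exactly the per-$\xi$ version of Assumption~\ref{assum:bounded_subgradients}. The random vector $z$ is independent of $\xi$ and has the required uniform distribution on the $\e$-ball with zero mean. These three observations are all that is needed to unlock the three conclusions from the earlier lemmas.

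No step here presents a genuine obstacle: the lemma is a straightforward ``with parameter'' restatement of the single-function results, and the proof amounts to noting that the constants $C$, $\kappa$, and $n!!/(n-1)!!$ appearing in Lemmas~\ref{lemma:approxf} and~\ref{lemma:approxf-uni} are independent of the underlying function as long as the subgradient bound is uniform, which the hypothesis provides. Consequently the proof can be dispatched in a few lines by citing the two earlier lemmas and remarking that all required hypotheses hold for each $F(\cdot,\xi)$ uniformly in $\xi$.
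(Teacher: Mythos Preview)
Your proposal is correct and matches the paper's own proof, which simply observes that each $F(\cdot,\xi)$ satisfies the hypotheses of the earlier single-function lemmas and applies them pointwise in $\xi$. If anything, you are slightly more careful than the paper: the paper's proof cites only Lemma~\ref{lemma:approxf}, whereas you correctly note that part~(c) actually rests on Lemma~\ref{lemma:approxf-uni}.
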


\begin{proof}
Under the given assumptions, each of the functions
$F(\cdot,\xi)$ for $\xi\in\Omega$ satisfies the
conditions of Lemma~\ref{lemma:approxf}. Thus, the results follow by
applying the lemma to each of the functions
$F(\cdot,\xi)$ for $\xi\in\Omega$.
\end{proof}

In the light of Lemma~\ref{lemma:approxf}, the optimal value $\hat
f^*$ of the approximate problem in~\eqref{eqn:stoch_problem} is an
overestimate of the optimal value $f^*$ of the original
problem~\eqref{eqn:problem} within the error $\e C$. In particular,
by taking the expectation with respect to $\xi$ in the relation of
Lemma~\ref{lemma:approxf}(b), we obtain
\[f^*\le \hat f^*\le f^*+\e C.\]

This motivates solving approximate
problem~\eqref{eqn:stoch_problem}. Since for every $\xi
\in\Omega$, the function $\hat F(\cdot,\xi)$ is convex
and differentiable over the set $X$, the function $\hat
f(x)=\EXP{\hat F(x,\xi)}$ is also convex and differentiable over the
set $X$ (see~\cite{Bertsekas73}). Thus, the objective function $\hat
f$ in~\eqref{eqn:stoch_problem} is differentiable. To solve the
problem, we consider the method in~\eqref{eqn:algorithm}, which
takes the following form:
\begin{equation}\label{eqn:algo_approxstoch}
\begin{split}
x_{k+1}&=\Pi_X[x_k-\g_k(\nabla \hat f(x_k)+w_k)]\qquad\hbox{for
}k\ge0,\cr w_k&=s_k-\nabla \hat f(x_k)\qquad\hbox{with } s_k\in
\partial_x F(x_k+z_k,\xi_k).
\end{split}
\end{equation}

We have the following convergence result for the method.
\begin{proposition}\label{prop:approxstochFresult}
Let the assumptions of Lemma~\ref{lemma:approxstochF} hold, and let
Assumption~\ref{assum:step_error} hold. Then, the sequence $\{x_k\}$
generated by method~(\ref{eqn:algo_approxstoch}) converges almost
surely to some optimal solution of
problem~\eqref{eqn:stoch_problem}.
\end{proposition}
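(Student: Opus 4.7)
The plan is to reduce this proposition to Proposition~\ref{prop:lipschitzgrad} applied to the approximate problem~\eqref{eqn:stoch_problem}, treating $\hat f$ as the objective function and the sampling in~\eqref{eqn:algo_approxstoch} as the noisy gradient oracle. The strategy mirrors the proof of Proposition~\ref{prop:approxfresult} but now the randomness lives in the product space of $\xi$ and $z$, so I must augment the filtration accordingly.

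First I would set up the filtration $\sF_k=\{x_0,\xi_0,z_0,\ldots,\xi_{k-1},z_{k-1}\}$, with the pairs $\{(\xi_k,z_k)\}$ i.i.d.\ and independent of $x_0$. Next I would verify the convexity hypothesis: by Lemma~\ref{lemma:approxstochF}(a), each $\hat F(\cdot,\xi)$ is convex on an open set containing $X$, and $\EXP{\hat F(x,\xi)}=\hat f(x)$ is finite on $X$ (since $|\hat F(x,\xi)|\le|F(x,\xi)|+\e C$ by part (b) and $F$ has finite expectation by Assumption~\ref{assum:convex}), so Assumption~\ref{assum:convex} is satisfied for the approximate problem. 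Then I would invoke Lemma~\ref{lemma:approxstochF}(c) and take expectation with respect to $\xi$ to conclude that $\hat f$ is differentiable on $X$ with Lipschitz gradient of constant $\kappa \frac{n!!}{(n-1)!!}\frac{C}{\e}$, which is the smoothness ingredient needed by Proposition~\ref{prop:lipschitzgrad}.

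The key technical step is to check the zero-mean and square-summability conditions on $w_k=s_k-\nabla\hat f(x_k)$. Since $x_k$ is $\sF_k$-measurable and $(\xi_k,z_k)$ is independent of $\sF_k$, the interchange-of-expectation identity used in Lemma~\ref{lemma:approxstochF}(a), combined with~\eqref{eqn:expdiffer} applied to $\hat F$, gives
\[
\EXP{s_k\mid\sF_k}=\EXP{\partial_x F(x_k+z_k,\xi_k)\mid\sF_k}
=\EXP{\nabla_x\hat F(x_k,\xi_k)\mid\sF_k}=\nabla \hat f(x_k),
\]
hence $\EXP{w_k\mid\sF_k}=0$. For the second moment, the uniform bound $\|s\|\le C$ on $\partial_x F$ over $X_\e\times\Omega$ together with $\|\nabla\hat f(x_k)\|\le C$ (Lemma~\ref{lemma:approxstochF}(a) averaged in $\xi$) yields $\|w_k\|\le 2C$, so Assumption~\ref{assum:step_error}(b) follows from~\ref{assum:step_error}(a). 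With all hypotheses of Proposition~\ref{prop:lipschitzgrad} verified for~\eqref{eqn:stoch_problem}, almost-sure convergence of $\{x_k\}$ to an optimal solution of the approximate problem follows.

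The main obstacle I anticipate is the careful justification of the two interchanges of expectation needed to show $\EXP{s_k\mid\sF_k}=\nabla\hat f(x_k)$: one to move from $\partial_x F(x_k+z_k,\xi_k)$ to $\nabla_x\hat F(x_k,\xi_k)$ (conditioning on $\xi_k$ and integrating over $z_k$, using the independence of $z_k$ from both $\xi_k$ and $\sF_k$), and one to move from $\nabla_x\hat F(x_k,\xi_k)$ to $\nabla\hat f(x_k)$ via~\eqref{eqn:expdiffer}. Both are routine given the uniform subgradient bound $C$, but they hinge on the independence structure of $\xi_k$ and $z_k$ and on the single-valuedness of $\EXP{\partial f(x+z)}$ established in Lemma~\ref{lemma:approxf}; once this is made explicit, the rest is a direct citation of Proposition~\ref{prop:lipschitzgrad}.
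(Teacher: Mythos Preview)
Your proposal is correct and follows essentially the same approach as the paper: verify the hypotheses of Proposition~\ref{prop:lipschitzgrad} for the smoothed problem by invoking Lemma~\ref{lemma:approxstochF} to obtain convexity, differentiability, and Lipschitz gradients of $\hat f$, and then show $\|w_k\|\le 2C$ (from $\|s_k\|\le C$ and $\|\nabla\hat f(x_k)\|\le C$) so that Assumption~\ref{assum:step_error}(b) follows from~(a). Your treatment is slightly more explicit than the paper's in that you spell out the augmented filtration and the two-step conditioning argument for $\EXP{w_k\mid\sF_k}=0$, whereas the paper takes the zero-mean property as inherited from the general framework~\eqref{eqn:zeromean}; otherwise the arguments coincide.
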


\begin{proof}
It suffices to show that the conditions of
Proposition~\ref{prop:lipschitzgrad} are satisfied for the set $X$,
and the functions $\hat F(x,\xi)$ and $\hat f(x)$. The result will
then follow from Proposition~\ref{prop:lipschitzgrad}.

We first verify that $\hat F(x,\xi)$ satisfies
Assumption~\ref{assum:convex} and that $\hat f(x)$ has Lipschitz
gradients over $X$. Under the given assumptions,
Lemma~\ref{lemma:approxstochF} holds. By
Lemma~\ref{lemma:approxstochF}(a)--(b), the function $\hat F(x,\xi)$
satisfies Assumption~1. Furthermore, by
Lemma~\ref{lemma:approxstochF}(a) and (c), the function $\hat
F(x,\xi)$ is differentiable and with Lipschitz gradients for every
$\xi\in\Omega$. Hence, $\hat f(x)=\EXP{\hat
    F(x,\xi)}$ is also
differentiable with the gradient given by $\nabla \hat
f(x)=\EXP{\nabla_x \hat F(x,\xi)}$(see~\cite{Bertsekas73}). To see
that the gradients $\nabla\hat f$ are Lipschitz continuous, we take
the expectation in the relation of
Lemma~\ref{lemma:approxstochF}(c), and we obtain for all $x,y\in X$,
\[\EXP{\|\nabla_x \hat F(x,\xi) -\nabla_x \hat F(y,\xi)\|}
\le \kappa \dfrac{ n!!}{(n-1)!!} \,\dfrac{C}{\e}\|x-y\|,\] where
$\kappa= \frac{2}{\pi}$ if $n$ is even, and otherwise $\kappa = 1$.
Using Jensen's inequality, we further have for all $x,y\in X$,
\[\| \EXP{\nabla_x \hat F(x,\xi)}-\EXP{\nabla_x \hat F(y,\xi)}\|
\le \kappa \dfrac{ n!!}{(n-1)!!} \,\dfrac{C}{\e}\|x-y\|.\] Since
$\nabla \hat f(x)=\EXP{\nabla_x \hat F(x,\xi)}$, it follows that
$\nabla \hat f(x)$ is Lipschitz over the set $X$. Thus, the
objective function $\hat f$ satisfies the conditions of
Proposition~\ref{prop:lipschitzgrad}.

We now show that Assumption~\ref{assum:step_error}(b) is satisfied.
In view of the assumption that $\sum_{k=0}^\infty \g_k^2<\infty$
(Assumption~\ref{assum:step_error}(a)), it suffices to show that
$\|w_k\|$ is uniformly bounded. By the definition of $w_k$
in~\eqref{eqn:algo_approxstoch}, we have for all $k$,
\[\|w_k\|\le\|s_k\|+\|\nabla \hat f(x_k)\|
\qquad\hbox{with }s_k\in\partial_x F(x_k+z_k,\xi_k),\] where $x_k\in
X$ and $\|z_k\|\le\e$ for all $k$. Thus, $x_k+z_k\in X_\e$ for all
$k$. By the assumptions of Lemma~\ref{lemma:approxstochF}, the
subdifferential set $\partial_x F(x,\xi)$ is uniformly bounded over
$X_\e\times\Omega$, implying that
\begin{equation}\label{eqn:wkbound}
\|w_k\|\le C+\|\nabla \hat f(x_k)\|\qquad\hbox{for all }k\ge0.
\end{equation}
We next prove that the gradients $\nabla\hat f(x)$ are uniformly
bounded over the set $X$. Taking the expectation in the relation
$\|\nabla_x \hat F(x,\xi)\|\le C$ valid for any $x\in
X$ and $\xi\in\Omega$ (Lemma~\ref{lemma:approxstochF}(a)),
and using Jensen's inequality, we obtain
\[\| \EXP{\nabla_x \hat F(x,\xi)}\|
\le \EXP{\|\nabla_x \hat F(x,\xi)\|}\le C\qquad\hbox{for $x\in
X$}.\] Since $\nabla \hat f(x)=\EXP{\nabla_x \hat F(x,\xi)}$, we see
that $\|\nabla \hat f(x)\|\le C$ for $x\in X$. This and
relation~\eqref{eqn:wkbound} yields
\[\|w_k\|\le 2 C\qquad\hbox{for all }k\ge0.\]
thus showing that $\|w_k\|$ is uniformly bounded.
\end{proof}

 \section{Numerical results}\label{sec:numerical}
In this section, we present computational results of applying our
adaptive and smoothing schemes to \us{three} test problems. Sections
\ref{sec:utility_problem}, \ref{sec:BMG_problem} and \ref{sec:snum}
\us{consider} a stochastic utility problem (see \cite{Nemirovski09}), a
bilinear matrix game and a stochastic network utility maximization
problem, respectively. In all of these examples, we \us{compare} the
performance of the recursive steplength SA scheme (RSA) and the
cascading steplength SA scheme (CSA) with a standard implementation of
stochastic approximation.  The standard SA scheme, where the steplength
sequence is \us{chosen} to be a harmonic \us{sequence} is referred to as
the HSA scheme and is employed as a benchmark.  \us{For each example, we
provide this comparison for 9 problems of varying size and problem
parameters apart from figures illustrating the difference between
theoretical bounds and the obtained results.}   Notably, the first two
problems are nonsmooth convex problems, prompting us to work with a
regularized strongly convex form. In Section \ref{sec:discussion}, we
discuss the sensitivity of the schemes to changes in parameters.
Throughout Section \ref{sec:numerical},  we use $N, n, \eta$ and $\e$,
		   to denote the no. of iterations, the problem dimension, the
		   strong convexity parameter, and the size of the uniform
		   distribution employed for smoothing, respectively.
 
\subsection{Examples}
\subsubsection{A stochastic utility problem}\label{sec:utility_problem}
Consider the following optimization problem,
\begin{align}\label{eqn:utility_problem}
\min_{x \in X} \left
\{f(x)=\EXP{\phi\left(\sum_{i=1}^{n}\left(\frac{i}{n}+\xi_i\right)x_i\right)}
\right\},
\end{align}
where $X=\{x \in R^n | x \geq 0, \sum_{i=1}^{n}x_i=1\}$, $\xi_i $	are
independent and normally distributed random variables with mean zero and
variance one. The function $\phi(\cdot)$ is a piecewise linear convex
function given by $\phi(t)=\max_{1\le i \le m}\{v_i+s_it\},$	 where
$v_i$ and $s_i$ are constants between zero and one, and
$F(x,\xi)=\phi(\sum_{i=1}^{n}(\frac{i}{n}+\xi_i)x_i))$. To apply our
schemes, we require strong convexity of function $f$. Therefore, we
regularize $f$ by adding the term $\frac{\eta }{2}\|x\|^2$ to $f$ where
$\eta >0$ is the strong convexity parameter. We now apply the randomized
smoothing technique discussed in Section \ref{sec:rnd_SA}. Smoothed
regularized problem given by
\begin{align}\label{eqn:inexact_utility_problem}
\min_{x \in X}\left\{\hat f(x)\triangleq
\EXP{\phi(\sum_{i=1}^{n}(\frac{i}{n}+\xi_i)(x_i+z_i))+\frac{\eta}{2}\|x+z\|^2}
\right\},
\end{align}
where $z \in \mathbb{R}^n$ is the uniform distribution on a ball with
radius $\epsilon$ with independent elements $z_i$, $1 \leq i \leq n$.
{We let $x^*$ denote an optimal solution of problem
(\ref{eqn:utility_problem}) and $x_{\epsilon, \eta}^*$ be} the unique
optimal solution of problem (\ref{eqn:inexact_utility_problem}). To find
{optimal} solutions, we use an SAA method~\cite{shap03sampling}
which leads to linear and a quadratic program for solving problem
(\ref{eqn:utility_problem}) and problem
(\ref{eqn:inexact_utility_problem}), respectively. 

Table \ref{tab:utility_errors} shows the results of parametric analysis
of the simulation of our schemes on problem \eqref{eqn:inexact_utility_problem}. The
table is partitioned into three parts, each corresponding to a variation
of parameters $n$, $N$,  $\eta$, respectively. In each
part, one parameter has been assigned three increasing values while the
other parameters are kept fixed, allowing us to ascertain   the impact of each
parameter on the performance of the schemes. We generated 50
trajectories of the RSA and CSA scheme for a given $n, N, \eta,
\epsilon.$ Over these realizations, we computed the means and 90$\%$
confidence intervals. The baseline parameters are chosen as $n=20$,
$N=4000$, $\e=0.5$, and $\eta=0.5$ as a reference for each group.  Note
that in Table~\ref{tab:utility_errors}, the confidence intervals employ
the logarithm of the error. 
Recall that we have a theoretical upper bound on the error
$\EXP{\|x_k -x_{\e,\eta}^*\|^2}$, as given by \eqref{rate_nu_est} and
\eqref{constant-rate} for the RSA and CSA schemes. Additionally, we
{obtain an empirical error bound based on using the scheme in 
practice. 
{\bf Insights:} 
We observe that the confidence intervals of both the CSA and the RSA
schemes are relatively invariant to changes in problem dimension.
Furthermore, RSA appears to have provide slightly tighter intervals in
comparison with CSA. Expectedly, increasing $N$ leads to significant
improvement in these intervals while larger values of $\eta$ lead to
less accurate solutions (with respect to the unregularized problem) but
tighter bounds. Moreover, the CSA schemes in particular give better
confidence bounds than RSA when $\eta$ is larger. 
\begin{table}[htb] 
\vspace{-0.05in} 
\tiny 
\centering 
\begin{tabular}{|c|c|c|c|c|c||c||c||c||c|} 
\hline 
-&P$(i)$ & $n$ & $N$ & $\epsilon$ & $\eta$ & HSA - $90\%$ CI & RSA - $90\%$ CI
& CSA - $90\%$ CI&$\|x_{\epsilon, \eta}^*-x^*\|^2$ \\ 
\hline 
\hline
$n$ &1 &  10& 4000& $5.0$e$-1$ & $5.0$e$-1$& [$1.00 $e${+0}$,$1.01 $e${+0}$]
&  [$1.58 $e${-3}$,$1.96 $e${-3}$]
&  [$1.47 $e${-3} $, $1.93 $e${-3}$] & $3.28 $e${-2}$ \\

\hbox{ }& 2 &20& 4000& $5.0$e$-1$ & $5.0$e$-1$& [$1.03 $e${+0}$,$1.04 $e${+0}$] & [$1.74 $e${-3}$,	
$2.21 $e${-3}$]&  [	$1.49 $e${-3} $, $1.88 $e${-3}$] & $1.84 $e${-2}$ \\

\hbox{ }&3 & 40& 4000& $5.0$e$-1$ & $5.0$e$-1$& [$1.03 $e${+0}$,$1.04 $e${+0}$] & [$2.21 $e${-3}$,	
$2.54 $e${-3}$]&  [$2.24 $e${-3} $, $2.74 $e${-3}$] & $6.49 $e${-2}$ \\ 
\hline 
$N$ & 4 &   20& 1000& $5.0$e$-1$ & $5.0$e$-1$& [$1.05 $e${+0}$,$1.05 $e${+0}$] & [$3.76 $e${-3}$,	$4.74	 
$e${-3}$] & [$4.67 $e${-3} $, $5.96 $e${-3}$] & $1.84 $e${-2}$ \\

\hbox{ }& 5 & 20& 2000& $5.0$e$-1$ & $5.0$e$-1$& [$1.04 $e${+0}$,$1.05 $e${+0}$] & [$2.86 $e${-3}$,	
$3.63 $e${-3}$]& [$2.78 $e${-3} $, $3.57 $e${-3}$] & $1.84 $e${-2}$ \\

\hbox{ }& 6 &  20& 4000& $5.0$e$-1$ & $5.0$e$-1$& [$1.03 $e${+0}$,$1.04 $e${+0}$] & [$1.74 $e${-3}$,	
$2.21 $e${-3}$]& [$1.49 $e${-3} $, $1.88 $e${-3}$] & $1.84 $e${-2}$  \\ 
\hline 

$\eta$ &7 &  20& 4000& $5.0$e$-1$ & $2.5$e$-2$& [$1.13 $e${+0}$,$1.13 $e${+0}$] & [$2.77 $e${-3}$,	
$3.48 $e${-3}$]&  [$2.73 $e${-3} $, $3.51 $e${-3}$] & $9.63 $e${-3}$ \\

\hbox{ }&8 &  20& 4000& $5.0$e$-1$ & $5.0$e$-1$& [$1.03 $e${+0}$,$1.04 $e${+0}$] & [$1.74 $e${-3}$,	
$2.21 $e${-3}$]& [$1.49 $e${-3} $, $1.88 $e${-3}$] & $1.84 $e${-2}$ \\

\hbox{ }&9 & 20& 4000& $5.0$e$-1$ & $1.0$e$+0$& [$0.83 $e${+0}$,$0.84 $e${+0}$]  & [$9.70 $e${-4}$, 
$1.21 $e${-3}$]&  [$1.07 $e${-3} $, $1.30 $e${-3}$] & $4.52 $e${-2}$ \\ \hline
\end{tabular} 
\caption{Stochastic utility problem: HSA, RSA, CSA} 
\label{tab:utility_errors} 
\vspace{-0.2in} 
\end{table}	
 
\subsubsection{A bilinear matrix game problem}\label{sec:BMG_problem}
We consider a bilinear matrix game,
\begin{align}\label{equ:BMG_prob}
\min_{x \in X} \max_{y \in Y} y^T A x,
\end{align}
where $X=Y=\{x \in \mathbb{R}^n: \sum_{i=1}^n x_i=1, x \geq0\}.$ 
Furthermore, $A$ is a symmetric matrix whose entries are
\begin{align}\label{equ:matrix A}
A_{ij}=\frac{i+j-1}{2n-1}\quad 1\leq i,j \leq n.
\end{align}
{Problem~\eqref{equ:BMG_prob}} a saddle point problem. Solving saddle point 
problems by SA algorithm has been discussed extensively (cf.~\cite{Nedich09}). 
The gradient and its sampled variant to be employed in algorithm
\eqref{eqn:algorithm} {are given by}:
\begin{align}\label{equ:BMG-subgradient}
g(x,y)=\left(
			\begin{array}{ccc}
A^Ty \\
-Ax  \end{array} \right), \qquad 
G(x,y,\xi)=\left( \begin{array}{ccc}
A_{\cdot,l(y,\xi_1)} \\
-A_{l(x,\xi_2),\cdot} \end{array} \right),
\end{align}
respectively 
where $l(y,\xi_1)$ and $\l(x,\xi_2)$ are random integers between 1 and $n$ 
with probabilities 
\[\frac{y_q-\min (0,y_1,\ldots,y_n)}{\sum_{j=1}^n (y_j -\min
		(0,y_1,\ldots,y_n))}, \quad   1 \leq q \leq n, \qquad 
 \frac{x_p-\min (0,x_1,\ldots,x_n)}
{\sum_{i=1}^n (x_i -\min (0,x_1,\ldots,x_n))}, \quad   1 \leq  p \leq n, \]
 respectively for arbitrary vectors $x$ and $y$. We generate these
 random variables through two independent random variables $\xi_1$ and
 $\xi_2$ which are uniformly distributed in $[0,1]$. Now, for any $(x,y)
	\in X \times Y$, since
$\min (0,x_1,\ldots,x_n)=\min (0,y_1,\ldots,y_n)=0$,
and $ \sum_{i=1}^n x_i =\sum_{j=1}^n y_j=1$, we have
\[\EXP{G(x,y,(l(y,\xi_1),l(x,\xi_2)))}= \left( \begin{array}{ccc}
A^Ty \\
-Ax  \end{array} \right)=g(x,y),\]
implying
that $w_k$ has zero-mean, i.e.,
$\EXP{w_k\mid\sF_k}=0$ for all $k\ge0.$
To analyze the behavior of the upper bound of error arising from RSA and
CSA, we need a strongly convex function. This is obtained by
adding a regularization term
$\frac{\eta}{2}\|x\|^2-\frac{\eta}{2}\|y\|^2$ to the function $y^TAx$
which makes it a strongly convex function with respect to $x$ and a
strongly concave function with respect to $y$. To apply the randomized
technique in Section~\ref{sec:localrand}, we consider an
($2n$)-dimensional ball with radius $\epsilon$ uniformly distributed.
We use the following SA algorithm to find the
solution to an approximate solution of
(\ref{equ:BMG_prob}):
\begin{equation}\label{eqn:algo_BMG}
\begin{split}
x_{k+1} & 
=\Pi_X[x_k-\g_k(G(x_k+\zeta_1^k,y_k+\zeta_2^k,l(y_k+\zeta_2^k,\xi_1^k))
+\eta(x_k+\zeta_1^k))] \qquad\hbox{for all
}k\ge0 ,
\cr 
y_{k+1} 
& =\Pi_Y[y_k+\g_k(G(x_k+\zeta_1^k,y_k+\zeta_2^k,l(x_k+\zeta_1^k,\xi_2^k))
-\eta(y_k+\zeta_2^k	))] 
\qquad\hbox{for all
}k\ge0, 
\end{split}\end{equation}
where $\zeta_1 \in \mathbb{R}^n$ and $\zeta_2 \in \mathbb{R}^m$ are 
random vectors with uniform distribution in the ($n+m$)-dimensional ball with 
radius $\epsilon$. 

From the structure of $A$ in (\ref{equ:matrix A}), it is observed that
the optimal solution of problem (\ref{equ:BMG_prob}) is obtained for
$x^*=[1,0,\ldots,0]^T$ and $y^*=[0,\ldots,0,1]^T$. This result can also be
obtained quite simply by using a  linear programming reformulation. The
regularized problem cannot be analyzed as easily and its solution can be
obtained by using QP duality and SAA techniques.

Table \ref{tab:BMG_errors} presents the results of simulations for RSA
and CSA schemes. Similar to the Table \ref{tab:utility_errors}, there
are three parts in the Table \ref{tab:BMG_errors} for the parameters. For
this problem, $\|x_{\epsilon, \eta}^*-x^*\|^2$ is very small and shows
that the optimal solution of the approximate problem is very close to
the optimal solution of problem (\ref{equ:BMG_prob}). We set $n=20$,
	$N=4000$, $\e=0.2$, and $\eta=0.01$ as the reference setting. Figure
	\ref{fig:BMG_mean} shows the theoretical upper bounds and the mean
	of samples of simulation for RSA and CSA schemes.

\begin{table}[htb] 
\vspace{-0.05in} 
\tiny 
\centering 
\begin{tabular}{|c|c|c|c|c|c||c||c||c||c|} 
\hline 
-&P$(i)$ & $n$ & $N$ & $\epsilon$ & $\eta$ & HSA - $90\%$ CI & RSA - $90\%$ CI
& CSA - $90\%$ CI&$\|x_{\epsilon, \eta}^*-x^*\|^2$ \\ 
\hline 
\hline
$n$ &1 &  10& 4000& $2.0$e$-1$ & $1.0$e$-2$& [$1.92$e$+0$,	
$1.92$e$+0$] & [$8.00$e$-12$,	
$8.00$e$-12$]&  [$2.00$e$-12$, $2.00$e$-12$] & $0.00$e${-12}$ \\

\hbox{ }& 2 &20& 4000& $2.0$e$-1$ & $1.0$e$-2$& [$1.92$e$+0$,	
$1.92$e$+0$]  & [$8.00$e$-12$, $9.00$e$-12$]&  
[$5.50$e$-10$, $5.76$e$-10$] & $0.00$e${-12}$ \\

\hbox{ }&3 & 40& 4000& $2.0$e$-1$ & $1.0$e$-2$& [$1.92$e$+0$,	
$1.92$e$+0$]  & [$9.82$e$-2$, $9.82$e$-2$]&  
[$3.55$e$-9$, $3.70$e$-9$] & $0.00$e${-12}$ \\  
\hline 
\hline
$N$ & 4 &   20& 1000& $2.0$e$-1$ & $1.0$e$-2$& [$1.92$e$+0$,	
$1.92$e$+0$]  & [$2.79$e$-1$, $2.79$e$-1$]&  
[$1.12$e$-1$, $1.12$e$-1$] & $0.00$e${-12}$ \\

\hbox{ }& 5 & 20& 2000& $2.0$e$-1$ & $1.0$e$-2$& [$1.93$e$+0$,	
$1.93$e$+0$]  & [$1.07$e$-1$, $1.07$e$-1$]&  
[$5.37$e$-10$, $5.77$e$-10$] & $0.00$e${-12}$ \\

\hbox{ }& 6 &  20& 4000& $2.0$e$-1$ & $1.0$e$-2$& [$1.92$e$+0$,	
$1.92$e$+0$] & [$8.00$e$-12$, $9.00$e$-12$]&
 [$5.50$e$-10$, $5.76$e$-10$] & $0.00$e${-12}$ \\  
\hline 
\hline

$\eta$ &7 &  20& 4000& $2.0$e$-1$ & $5.0$e$-3$& [$1.96$e$+0$,	
$1.96$e$+0$] & [$1.13$e$-1$,	$1.13$e$-1$]&  
[$-1.15$e$-10$, $2.51$e$-10$] & $0.00$e${-12}$ \\

\hbox{ }&8 &  20& 4000& $2.0$e$-1$ & $1.0$e$-2$& [$1.92$e$+0$,	
$1.92$e$+0$] & [$8.00$e$-12$, $9.00$e$-12$]& 
[$5.50$e$-10$, $5.76$e$-10$] & $0.00$e${-12}$	 \\

\hbox{ }&9 & 20& 4000& $2.0$e$-1$ & $2.0$e$-2$& [$1.84$e$+0$,	
$1.84$e$+0$]  & [$1.07$e$-10$,	$1.46$e$-10$]& 
[$3.29$e$-9$, $3.55$e$-9$] & $0.00$e${-12}$ \\ 
 \hline
\end{tabular} 
\caption{Bilinear matrix game problem: HSA, RSA, CSA} 
\label{tab:BMG_errors} 
\vspace{-0.2in} 
\end{table}	

{\bf Insights:} Unlike in the stochastic utility problem, in this
instance, the true optimal solution is obtained within the $N$ gradient
steps for most of the test problems. However, it should be remarked that
the CSA appears to find solutions faster than RSA, in at least three of
the problems (P$(i)$: 3, 5 and 7).  	

 \subsubsection{A stochastic network utility problem}\label{sec:snum}
 In this \us{example}, we consider a spatial network and consider the
 associated \us{network utility maximization problem}
 (See~\cite{Kelly98,Srikant04}). Suppose that there are $n$
 users and $L_1$ links.  \us{The overall network maximization
problem is characterized by an objective that is a sum of user-specific
concave	utilities less a congestion cost, which is given by a function of
	aggregate flow over a link}. \us{Let $x_i$ denote the $i$th user's flow
	rate while $F_i(x_;\xi)$ denotes its utility function, defined by}
\begin{align*}
F_i(x_i,\xi_i) \triangleq -k_i(\xi_i)\log(1+x_i),
\end{align*} 
where $k_i(\xi_i)$ is an uncertain parameter.  Suppose that $A$ denotes the
adjacency matrix that captures the set of links traversed by the
traffic. More precisely, for every link $l \in {\mathcal L}$ and user $i$, we have
$A_{li} =1$ if link $l$ carries flow of user $i$ and $A_{li}=0$
otherwise.  The congestion cost is given by $c(x)=\|Ax\|^2.$
\us{The total cost at the network level us then given by} 
\[F(x,\xi)=-\sum_{i=1}^{N}k_i(\xi_i)\log(1+x_i)+ \|Ax\|^2.\]
Therefore
\[\nabla F(x,\xi)= \left( \begin{array}{ccc}
-\frac{k_1}{1+x_1}  \\
\vdots  \\
-\frac{k_N}{1+x_N} \end{array} \right)+2A^TAx.\]
We assume that the user traffic rates are restricted by a capacity
constraint $Ax \leq C$. Since the objective function $F$ is smooth,
		   there is no requirement to introduce an additional smoothing. 

Table \ref{tab:network_errors} shows the results of simulations for HSA,
	  RSA, and CSA scheme. Here, we assume that
	  $C_3=(0.10,0.15,0.20,0.10,0.15,0.20,0.20,0.15,0.25)=0.75C_2=0.5C_1$
	  and \us{$x$ is constrained to be nonnegative.} We also assume that
	  $k_i(\xi_i)$ is drawn from uniform distribution $Uni(0.2,1)$ for
	  every user. The confidence intervals for the normed error between the
	  terminating iterate and the optimal solution are reported for each
	  problem.

{\bf Insights:} We observe that both RSA and CSA schemes perform
favorably in comparison with the HSA scheme. Importantly, neither scheme
appears to deteriorate from a confidence interval standpoint when 
the problem size  grows. Similar to the earlier examples, CSA appears to
have slightly tighter confidence intervals in the empirical tests that
we carried out. 

\begin{table}[htb] 
\vspace{-0.05in} 
\tiny 
\centering 
\begin{tabular}{|c|c|c|c|c||c||c||c|} 
\hline 
-&P$(i)$ &  $n$ & $N$ & $C$ & HSA - $90\%$ CI & RSA - $90\%$ CI
& CSA - $90\%$ CI \\ 
\hline 
\hline
$C$ &1 &   5& 4000 & $C_1$ & [$1.58 $e${-2}$,$1.89 $e${-2}$]
&  [$5.57 $e${-3}$,$6.81 $e${-3}$]
&  [$3.65 $e${-3} $, $4.55 $e${-3}$]  \\

\hbox{ }& 2 & 5& 4000 & $C_2$ & [$1.16 $e${-2}$,$1.38 $e${-2}$]
&  [$4.47 $e${-3}$,$5.86 $e${-3}$]
&  [$3.62 $e${-3} $, $4.52 $e${-3}$]  \\

\hbox{ }&3 &  5& 4000 & $C_3$ & [$9.08 $e${-3}$,$1.09 $e${-2}$]
&  [$4.30 $e${-3}$,$5.32 $e${-3}$]
&  [$3.62 $e${-3} $, $4.52 $e${-3}$]  \\ 
\hline 
$n$ & 4 &    5& 4000 & $C_3$ & [$9.08 $e${-3}$,$1.09 $e${-2}$]
&  [$4.30 $e${-3}$,$5.32 $e${-3}$]
&  [$3.62 $e${-3} $, $4.52 $e${-3}$]  \\

\hbox{ }& 5 &  10 & 4000 & $C_3$ & [$1.09 $e${-2}$,$1.31 $e${-2}$]
&  [$4.80 $e${-3}$,$5.94 $e${-3}$]
&  [$4.09 $e${-3} $, $5.08 $e${-3}$]  \\

\hbox{ }& 6 &   15& 4000 & $C_3$ & [$1.04 $e${-2}$,$1.24 $e${-2}$]
&  [$5.21 $e${-3}$,$6.36 $e${-3}$]
&  [$3.76 $e${-3} $, $4.63 $e${-3}$]  \\ 
\hline 

$N$ &7 &   5& 1000 & $C_3$ & [$8.98 $e${-3}$,$1.07 $e${-2}$]
&  [$6.63 $e${-3}$,$7.93 $e${-3}$]
&  [$5.36 $e${-3} $, $6.43 $e${-3}$] \\

\hbox{ }&8 &   5& 2000 & $C_3$ & [$9.70 $e${-3}$,$1.16 $e${-2}$]
&  [$5.65 $e${-3}$,$6.88 $e${-3}$]
&  [$5.32 $e${-3} $, $6.50 $e${-3}$]  \\

\hbox{ }&9 &  5& 4000 & $C_3$ & [$9.08 $e${-3}$,$1.09 $e${-2}$]
&  [$4.30 $e${-3}$,$5.32 $e${-3}$]
&  [$3.62 $e${-3} $, $4.52 $e${-3}$] \\ \hline
\end{tabular} 
\caption{Stochastic network utility problem: HSA, RSA, CSA} 
\label{tab:network_errors} 
\vspace{-0.2in} 
\end{table}	
\subsection{Interpretation of numerical results}\label{sec:discussion}  
\begin{figure}[htb]
 \centering
 \subfloat[Utility Problem]
{\label{fig:utility_mean}\includegraphics[width=2in,height=2in]
{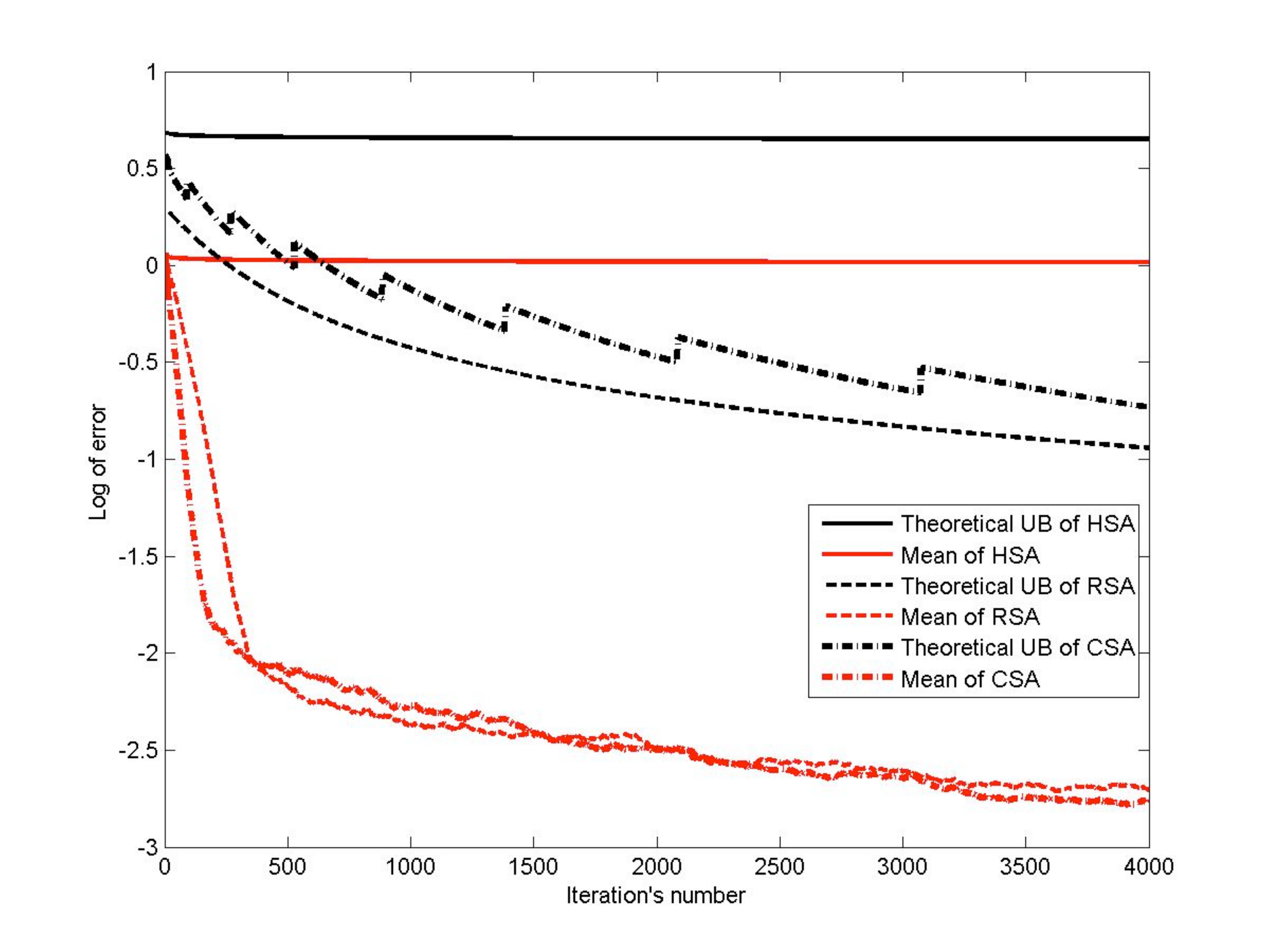}}
 \subfloat[Bimatrix
 Game]{\label{fig:BMG_mean}\includegraphics[width=2in,height=2in]
{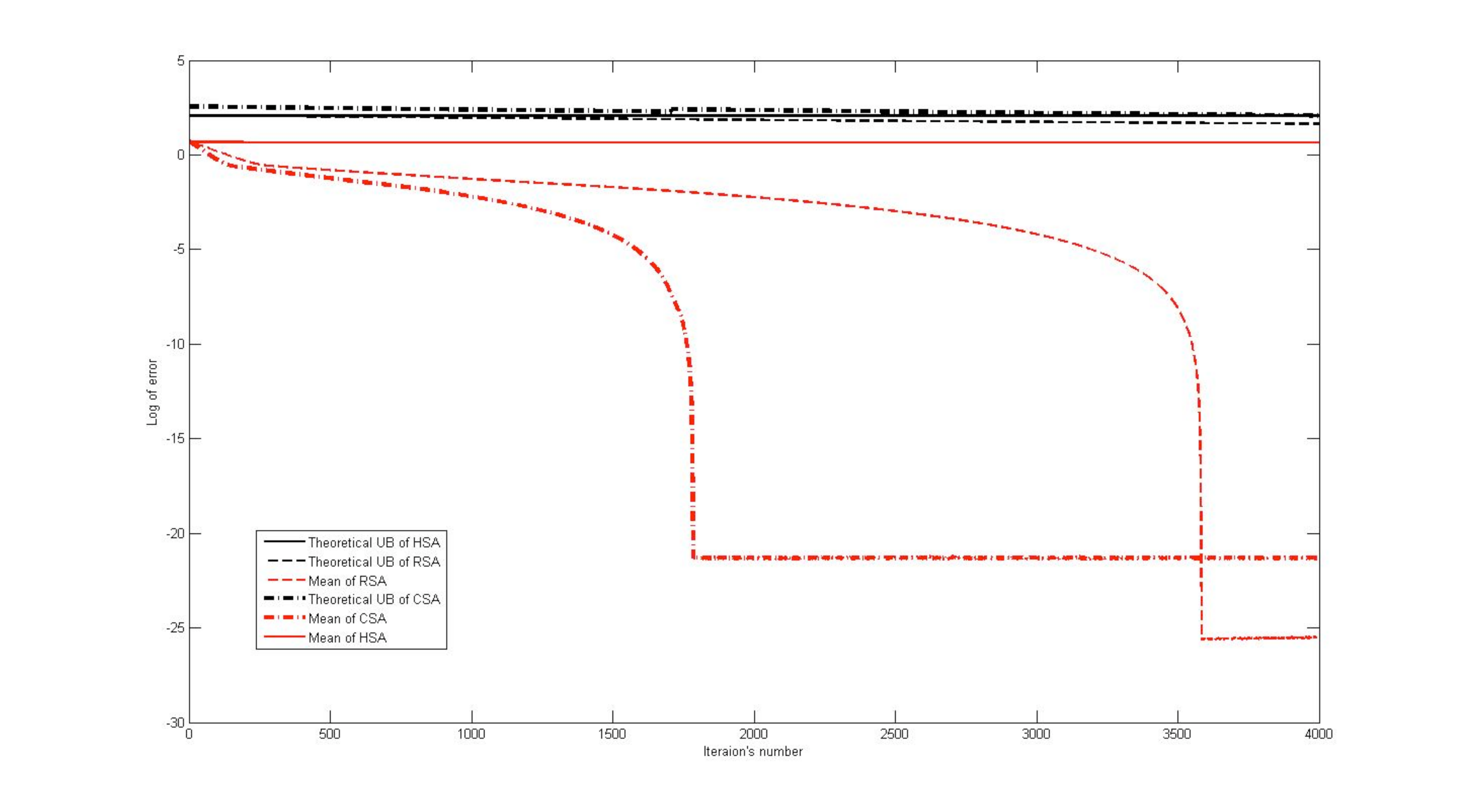}}
 \subfloat[Network Utility Problem
 Game]{\label{fig:NUM_mean}\includegraphics[width=2in,height=2in]
{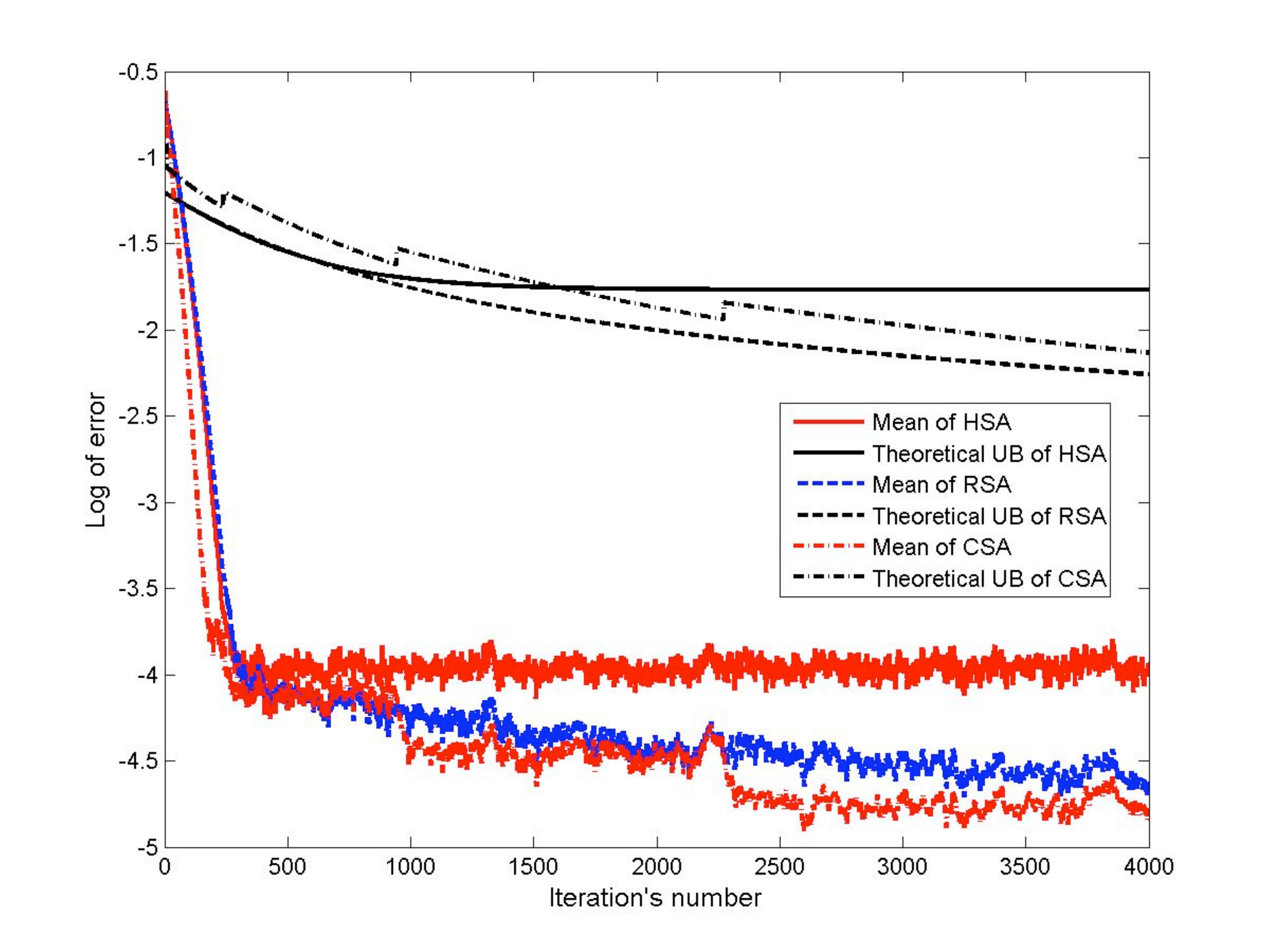}}
\caption{Theoretical and empirical error bounds for RSA and CSA
schemes.}
\end{figure}
\us{In this section, we interpret the numerical results obtained in the
previous subsections, focusing on a comparison between the theoretical
	and empirical results and the sensitivity of the schemes to the
	algorithm parameters.}

\subsubsection{Theoretical and empirical trajectories}
\us{In Figures~\ref{fig:utility_mean},~\ref{fig:BMG_mean} and
~\ref{fig:NUM_mean}, we provide schematics of the trajectories
associated with the theoretically obtained upper bounds and the
empirical means. Several observations can be immediately made. In the
context of the stochastic utility problem and the network utility
maximization problem, we observe that the RSA scheme displays uniformly
better theoretical bounds, in comparison with CSA. It is also worth
emphasizing that the ``jumps'' seen in the theoretical error bound
trajectories of CSA correspond to junctures where the steplengths drop. 
In fact, the cascading nature is also apparent in the empirical
trajectories of the network utility maximization game in
Fig~\ref{fig:NUM_mean}, albeit in a less obvious fashion. We observe
that the overall empirical behavior of both schemes is similar in terms
of the final errors for the utility and network utility maximization
problems while in the context of the bimatrix game, the CSA scheme
performs significantly better for a subset of problems. }

 \begin{figure}[htb]
 \centering{ 
 \subfloat[HSA]{\label{fig:HSA1}\includegraphics[width=2.1in,height=2.1in]
{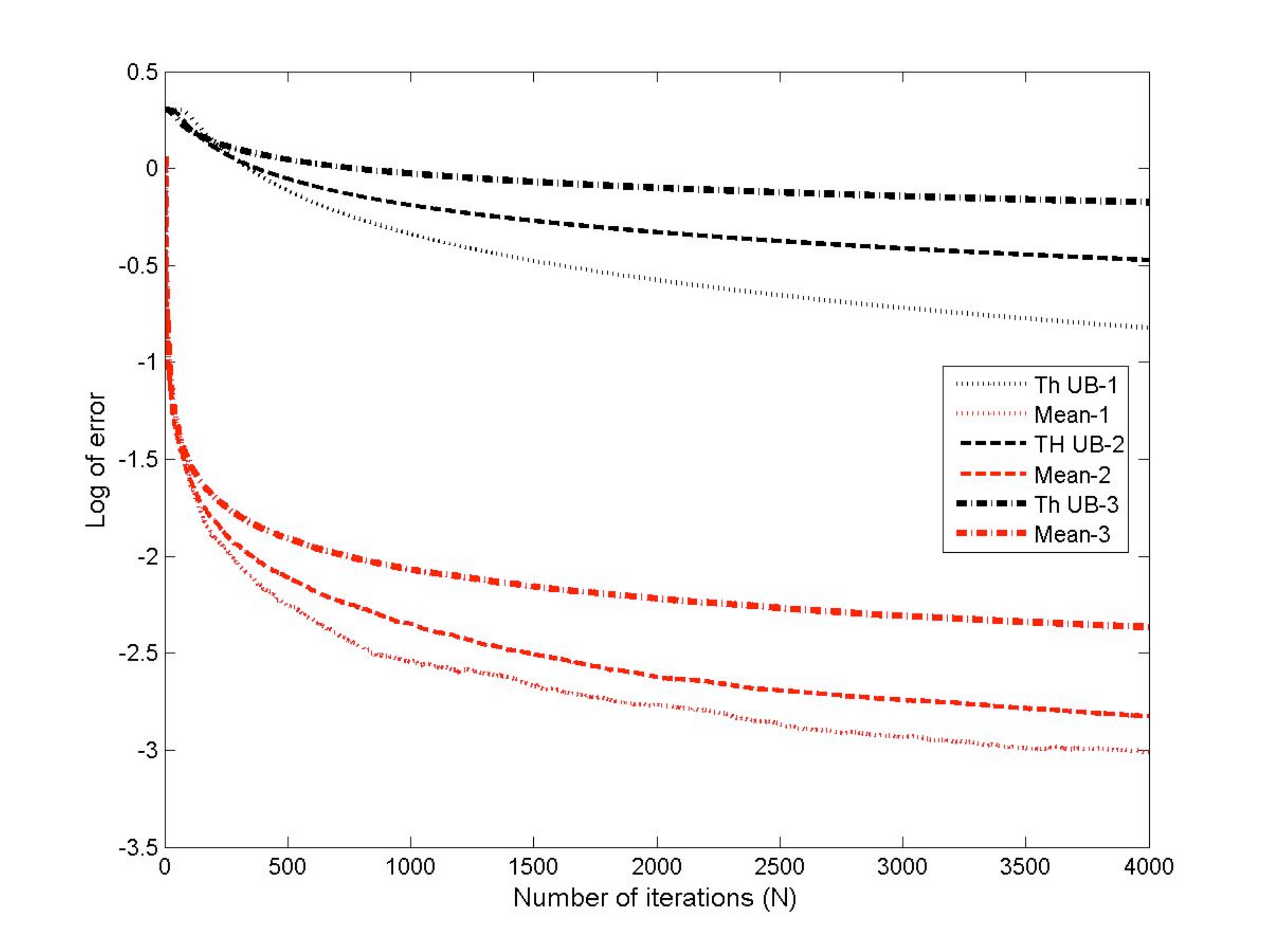}}
 \subfloat[RSA]{\label{fig:RSA1}\includegraphics[width=2.1in,height=2.1in]
{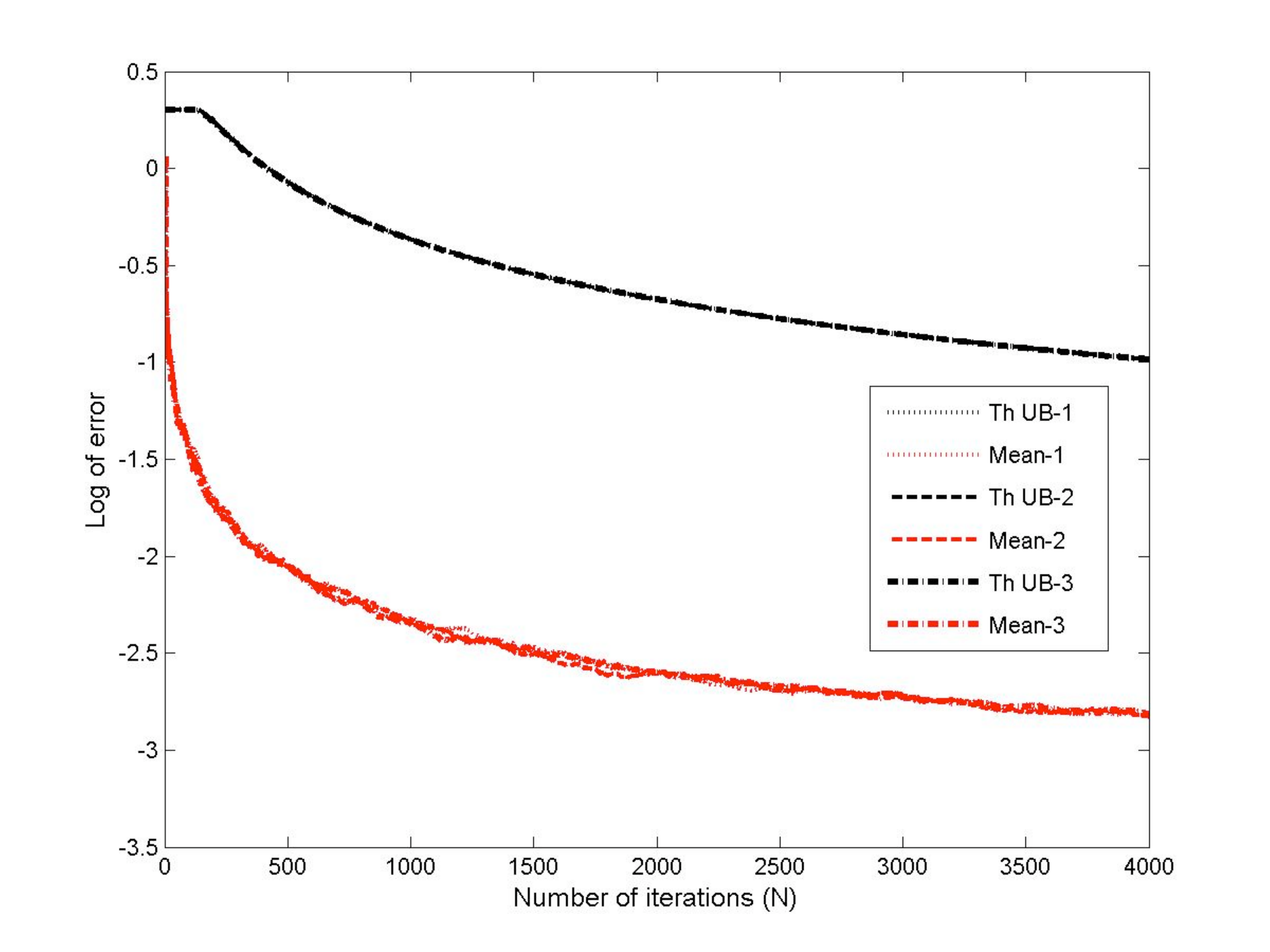}}
 \subfloat[CSA]{\label{fig:CSA1}\includegraphics[width=2.2in,height=2.1in]
{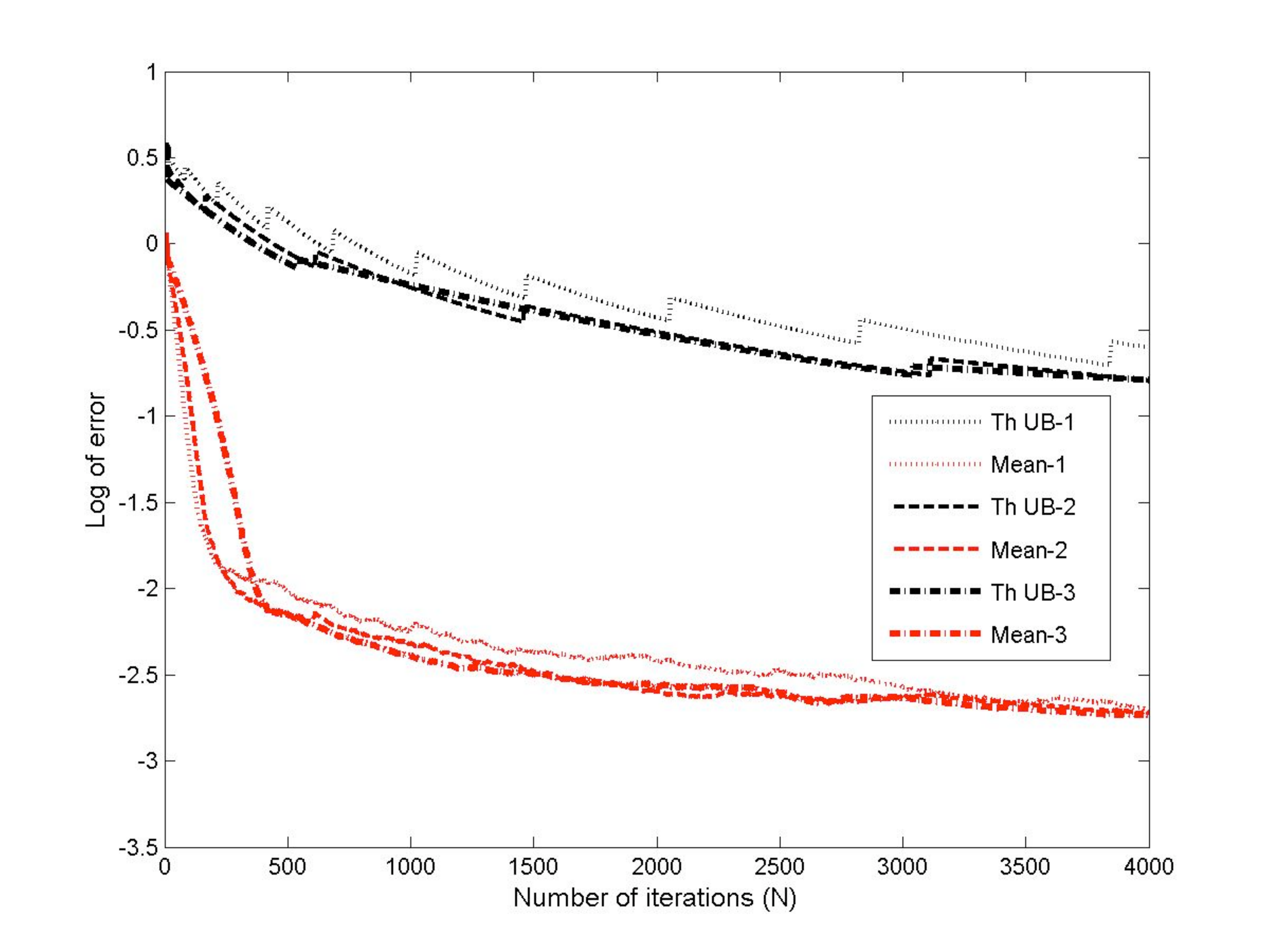}}
}
 \caption{The stochastic utility problem: HSA, RSA, CSA}
 \label{fig:Dis_utility}
\end{figure}
\subsubsection{Sensitivity to algorithm parameters}
Finally, in this section, we discuss the sensitivity of each scheme to
algorithm parameters and provide a comparison with a  standard stochastic
approximation scheme  where we assume 
that the stepsize is $\g_k=
\frac{\alpha}{k}$ for $k \geq 1$ and $\alpha >0$. In HSA, we intend to
examine the effect of choosing different values of $\alpha$ on the performance 
of the SA algorithm. In the RSA scheme, we have a choice of the first stepsize
$\gamma_0^{RSA}$ and also parameter $c$ in the inequality of Proposition
\ref{prop:rec_convergence}. We set $c = 0.5$ and examine the impact of
changing $\gamma_0^{RSA}$. Finally, the CSA scheme  performs differently with 
different choices of the cascading parameter $0<\theta<1$. We consider three
different values for each of $\alpha$, $\gamma_0^{RSA}$, and $\theta$
and present simulations for HSA, RSA and CSA in the
case of the stochastic utility problem.  The reference
setting is specified by $n=20$, $N=4000$, $\epsilon=0.5$, and
$\eta=0.5$. Now suppose $\alpha$, $\gamma_0^{RSA}$, and
$\theta$ are set as follows: \[\alpha=1, 0.5, \hbox{ and } 0.25; \quad \gamma_0^{RSA}=1,
0.5,\hbox{ and } 0.25; \quad \theta=0.75, 0.5,\hbox{ and } 0.25.  \]  
Figure~\ref{fig:Dis_utility} shows the simulations for the specified
parameters. Note that ``Th. UB'' shows the corresponding theoretical upper 
bound of each scheme and "Mean" shows the mean of error
$\|z_k-z_{\epsilon,\eta}^*\|^2$ where $z=(x,y)$.

Figure~\ref{fig:HSA1} shows the harmonic scheme with $\alpha=$1, 0.5,
and 0.25 corresponding to labels 1, 2, and 3 in the legend. This shows
that the performance of HSA is \us{extremely} sensitive to the choice of $\alpha$ and
HSA implementations with a larger $\alpha$ performed better for the stochastic 
utility problem. \us{Furthermore, the error on termination of HSA schemes
can vary by nearly a factor of 10 for the problems that we tested.}
	\us{The update rules in the RSA schemes rely on $\eta$ and $L$ with
		$\gamma_0^{RSA}$ being the sole user input.} Yet, when examining
the sensitivity of the RSA scheme to the choice of $\gamma_0^{RSA}$ (see
		Figure \ref{fig:RSA1} with $\gamma_0^{RSA}=$1, 0.5, and 0.25 corresponding to
labels 1, 2, and 3), we observe that  the performance is relatively
insensitive to the choice of initial stepsize. In effect, the modeler
can be relatively less concerned about such parameters when attempting
to solve this class of problems. Importantly, both theoretical and numerical 
aspect of RSA have almost the same performance for three values of 
$\gamma_0^{RSA}$.  \us{Finally, a concern in the implementation of CSA
schemes is the choice of $\theta$, the cascading parameter where
	$\theta \in (0,1)$.} Figure~\ref{fig:CSA1} shows the simulation of
	the  cascading scheme with $\theta=$0.75, 0.50, and 0.25
	corresponding to labels 1, 2, and 3.  Theoretically, we observe that
	smaller values of $\theta$ (more aggressive reductions in stepsize)
	lead to slightly superior theoretical bounds but not significantly
	so. \us{However, the results are far more muted when conducting an
	empirical examination. In particular, we observe that the CSA
	scheme appears to be relatively insensitive to diversity in the
	choice of $\theta.$ The relative robustness of the RSA and CSA
	schemes to the choice of parameters is seen as a crucial advantage of such schemes. }
\section{Concluding remarks}\label{sec:conclusions}
This paper  is motivated by two shortcomings associated with standard
stochastic approximation procedures for stochastic convex programs.
First, standard implementations of such schemes provide little guidance
in specifying  parameters that may prove crucial in practical
performance. Furthermore, direct extensions to nonsmooth regimes of such
schemes is not immediate. Accordingly, this paper makes two sets of
contributions.  First, we develop two sets of adaptive steplength
schemes and provide the associated global convergence theory. Of these,
the former, a recursive steplength scheme (RSA), specifies the
steplength at a particular iteration using the previous steplength and
certain problem parameters.  The second scheme, called a cascading
steplength scheme (CSA), differs significantly and is essentially a
sequence of constant steplength schemes in which the steplength is
reduced at specific points in time.  The second set of contributions
extends these techniques to settings where the objective is not
necessarily differentiable.  Through the use of a local smoothing method
that perturbs the problem through a uniformly distributed random
variable, we propose a stochastic gradient scheme. Notably, Lipschitz
bounds are obtained {for the gradients and their growth} with problem
size is found to be modest.  Locally smoothed variants of the RSA and
CSA scheme were seen to perform well on two classes of nonsmooth
stochastic optimization problems and implementations were seen to be
relatively insensitive to problem parameters.

\bibliographystyle{IEEEtran}
\bibliography{local_rand}

\end{document}